\numberwithin{equation}{section}
\theoremstyle{plain}
\newtheorem{thm}{Theorem}[section]
\newtheorem{cor}{Corollary}[section]
\newtheorem{lem}{Lemma}[section]
\theoremstyle{remark}
\newtheorem{rem}{Remark}[section]
\DeclareMathOperator{\td}{d}
\DeclareMathOperator{\ti}{i}
\DeclareMathOperator{\te}{e}
\DeclareMathOperator{\arccosh}{arccosh}
\DeclareMathOperator{\arcsinh}{arcsinh}
\DeclareMathOperator{\arctanh}{arctanh}
\DeclareMathOperator{\bell}{B}
\begin{document}

\title[Series expansions for powers of inverse cosine]
{Taylor's series expansions for real powers of functions containing squares of inverse (hyperbolic) cosine functions, explicit formulas for special partial Bell polynomials, and series representations for powers of circular constant}

\author[F. Qi]{Feng Qi}
\address{Institute of Mathematics, Henan Polytechnic University, Jiaozuo 454010, Henan, China\newline\indent
School of Mathematical Sciences, Tiangong University, Tianjin 300387, China}
\email{\href{mailto: F. Qi <qifeng618@gmail.com>}{qifeng618@gmail.com}, \href{mailto: F. Qi <qifeng618@hotmail.com>}{qifeng618@hotmail.com}, \href{mailto: F. Qi <qifeng618@qq.com>}{qifeng618@qq.com}}
\urladdr{\url{https://qifeng618.wordpress.com}, \url{https://orcid.org/0000-0001-6239-2968}}

\dedicatory{Dedicated to people facing and battling COVID-19}

\begin{abstract}
In the paper, by virtue of expansions of two finite products of finitely many square sums, with the aid of series expansions of composite functions of (hyperbolic) sine and cosine functions with inverse sine and cosine functions, and in the light of properties of partial Bell polynomials, the author establishes Taylor's series expansions of real powers of two functions containing squares of inverse (hyperbolic) cosine functions in terms of the Stirling numbers of the first kind, presents an explicit formula of specific partial Bell polynomials at a sequence of derivatives of a function containing the square of inverse cosine function, derives several combinatorial identities involving the Stirling numbers of the first kind, demonstrates several series representations of the circular constant Pi and its real powers, recovers series expansions of positive integer powers of inverse (hyperbolic) sine functions in terms of the Stirling numbers of the first kind, and also deduces other useful, meaningful, and significant conclusions.
\end{abstract}

\keywords{Taylor's series expansion; real power; inverse cosine function; inverse hyperbolic cosine function; inverse sine function; inverse hyperbolic sine function; Stirling number of the first kind; combinatorial identity; composite; series representation; circular constant; partial Bell polynomial; explicit formula}

\subjclass{Primary 41A58; Secondary 05A19, 11B73, 11B83, 11C08, 11S05, 12D05, 26A24, 26C05, 33B10}

\thanks{This paper was typeset using\AmS-\LaTeX}

\maketitle
\tableofcontents

\section{Simple preliminaries}

In this paper, we use the notation
\begin{equation*}
\mathbb{N}=\{1,2,\dotsc\},\quad \mathbb{N}_0=\{0,1,2,\dotsc\}, \quad \mathbb{N}_-=\{-1,-2,\dotsc\}, \quad \mathbb{Z}=\{0,\pm1,\pm2,\dotsc\}.
\end{equation*}
\par
The classical Euler gamma function $\Gamma(z)$ can be defined~\cite[Chapter~3]{Temme-96-book} by
\begin{equation*}
\Gamma(z)=\lim_{n\to\infty}\frac{n!n^z}{\prod_{k=0}^n(z+k)}, \quad z\in\mathbb{C}\setminus\{0,-1,-2,\dotsc\}.
\end{equation*}
The rising factorial, or say, the Pochhammer symbol, of $\beta\in\mathbb{C}$ is defined~\cite[p.~7497]{AIMS-Math20210491.tex} by
\begin{equation}\label{rising-Factorial}
(\beta)_n=\prod_{k=0}^{n-1}(\beta+k)
=
\begin{cases}
\beta(\beta+1)\dotsm(\beta+n-1), & n\in\mathbb{N};\\
1, & n=0.
\end{cases}
\end{equation}
For $\alpha,\beta\in\mathbb{C}$ with $\alpha+\beta\in\mathbb{C}\setminus\{0,-1,-2,\dotsc\}$, extended Pochhammer symbol $(\beta)_\alpha$ is defined~\cite{Hansen-B-1975} by
\begin{equation}\label{extended-Pochhammer-dfn}
(\beta)_{\alpha}=\frac{\Gamma(\alpha+\beta)}{\Gamma(\beta)}.
\end{equation}
The falling factorial for $\beta\in\mathbb{C}$ and $n\in\mathbb{N}$ is defined by
\begin{equation*}%\label{falling-Factorial}
\langle\beta\rangle_n=\prod_{k=0}^{n-1}(\beta-k)
=
\begin{cases}
\beta(\beta-1)\dotsm(\beta-n+1), & n\in\mathbb{N};\\
1, & n=0.
\end{cases}
\end{equation*}
The extended binomial coefficient $\binom{z}{w}$ is defined~\cite{DIGCBC-Wei.tex} by
\begin{equation*}%\label{Gen-Coeff-Binom}
\binom{z}{w}=
\begin{dcases}
\frac{\Gamma(z+1)}{\Gamma(w+1)\Gamma(z-w+1)}, & z\not\in\mathbb{N}_-,\quad w,z-w\not\in\mathbb{N}_-;\\
0, & z\not\in\mathbb{N}_-,\quad w\in\mathbb{N}_- \text{ or } z-w\in\mathbb{N}_-;\\
\frac{\langle z\rangle_w}{w!},& z\in\mathbb{N}_-, \quad w\in\mathbb{N}_0;\\
\frac{\langle z\rangle_{z-w}}{(z-w)!}, & z,w\in\mathbb{N}_-, \quad z-w\in\mathbb{N}_0;\\
0, & z,w\in\mathbb{N}_-, \quad z-w\in\mathbb{N}_-;\\
\infty, & z\in\mathbb{N}_-, \quad w\not\in\mathbb{Z}.
\end{dcases}
\end{equation*}
\par
The Stirling numbers of the first kind $s(n,k)$ for $n\ge k\ge0$ can be generated~\cite[p.~20, (1.30)]{Temme-96-book} by
\begin{equation*}%\label{gen-funct-3}
\frac{[\ln(1+x)]^k}{k!}=\sum_{n=k}^\infty s(n,k)\frac{x^n}{n!},\quad |x|<1
\end{equation*}
and satisfy diagonal recursive relations
\begin{equation*}%\label{Stirling1ID}
\frac{s(n+k,k)}{\binom{n+k}{k}}
=\sum_{\ell=0}^{n} (-1)^{\ell}\frac{\langle k\rangle_{\ell}}{\ell!} \sum_{m=0}^\ell(-1)^m\binom{\ell}{m}\frac{s(n+m,m)}{\binom{n+m}{m}}
\end{equation*}
and
\begin{align*}%\label{s(n-k)=s(n-k)-id}
s(n,k)&=(-1)^{k}\sum_{m=1}^{n}(-1)^{m}\sum_{\ell=k-m}^{k-1}(-1)^{\ell} \binom{n}{\ell}\binom{\ell}{k-m} s(n-\ell,k-\ell)\\ %\label{1stirling-diagonal-eq}
&=(-1)^{n-k}\sum_{\ell=0}^{k-1}(-1)^\ell\binom{n}{\ell} \binom{\ell-1}{k-n-1}s(n-\ell,k-\ell)
\end{align*}
in~\cite[p.~23, Theorem~1.1]{1st-Stirl-No-adjust.tex} and~\cite[p.~156, Theorem~4]{AADM-2821.tex}.
\par
The partial Bell polynomials, or say, the Bell polynomials of the second kind, can be denoted and defined by
\begin{equation*}%\label{bell-polyn-dfn}
\bell_{n,k}(x_1,x_2,\dotsc,x_{n-k+1})=\sum_{\substack{1\le i\le n-k+1\\ \ell_i\in\{0\}\cup\mathbb{N}\\ \sum_{i=1}^{n-k+1}i\ell_i=n\\
\sum_{i=1}^{n-k+1}\ell_i=k}}\frac{n!}{\prod_{i=1}^{n-k+1}\ell_i!} \prod_{i=1}^{n-k+1}\biggl(\frac{x_i}{i!}\biggr)^{\ell_i}.
\end{equation*}
in~~\cite[p.~412, Definition~11.2]{Charalambides-book-2002} and~\cite[p.~134, Theorem~A]{Comtet-Combinatorics-74}.

\section{Motivations}

Let $f(z)$ and $h(z)$ be infinitely differentiable functions such that the function $f(z)$ has the formal series expansion $f(z)=\sum_{k=0}^{\infty}c_kx^k$ and the composite function $h(f(z))$ is defined on a non-empty open interval. A natural problem is to find the series expansion of the composite function $h(f(z))$. This problem can be regarded as how to compute derivatives of the composite function $h(f(z))$. There have been a long history and a number of literature in textbooks, handbooks, monographs, and research articles on this problem. See the references~\cite{abram, Hansen-B-1975, Bell-value-elem-funct.tex, AJOM-D-16-00138.tex, Temme-96-book}, for example.
\par
In the above general theory, the cases $h(z)=z^r$ for $r\in\mathbb{C}\setminus\{1\}$ and $f(z)$ being concrete elementary functions are of special interest and attract some mathematicians. We recall some results as follows.
\par
In~\cite[p.~377, (3.5)]{Thir-Nanj-1951-India} and~\cite[pp.~109--110, Lemma~1]{Yang-Zhen-MIA-2018-Bessel}, it was obtained that
\begin{equation*}
I_\mu(x)I_\nu(x)=\frac{1}{\Gamma(\mu+1)\Gamma(\nu+1)} \sum_{n=0}^{\infty} \frac{(\mu+\nu+n+1)_n}{n!(\mu+1)_n(\nu+1)_n} \biggl(\frac{x}{2}\biggr)^{2n+\mu+\nu},
\end{equation*}
where the first kind modified Bessel function $I_\nu(z)$ can be represented~\cite[p.~375, 9.6.10]{abram} by
\begin{equation*}%\label{I=nu(z)-eq}
I_\nu(z)= \sum_{n=0}^\infty\frac1{n!\Gamma(\nu+n+1)}\biggl(\frac{z}2\biggr)^{2n+\nu}, \quad z\in\mathbb{C}.
\end{equation*}
In~\cite[p.~310]{Bender-Brody-Meister-JMP-2003}, the power series expansion
\begin{equation*}
[I_\nu(z)]^2=\sum_{k=0}^{\infty}\frac{1}{[\Gamma(\nu+k+1)]^2}\binom{2k+2\nu}{k}\biggl(\frac{z}{2}\biggr)^{2k+2\nu}
\end{equation*}
was listed.
As for the series expansion of the function $[I_\nu(z)]^r$ for $\nu\in\mathbb{C}\setminus\{-1,-2,\dotsc\}$ and $r,z\in\mathbb{C}$, please refer to~\cite{Baricz-AML-2010, Bender-Brody-Meister-JMP-2003, Bess-Pow-Polyn-CMES.tex, Howard-Fibonacci-1985, Moll-Vignat-IJNT-2014}. One of the reasons why ones investigated the series expansions of the functions $[I_\nu(z)]^r$ is that the products of the (modified) Bessel functions of the first kind appear frequently in problems of statistical mechanics and plasma physics, see~\cite{Baker-Temme-JMP-1984, Newberger-Erratum-JMP-1983, Newberger-JMP-1982}.
\par
In the papers~\cite{Borwein-Chamberland-IJMMS-2007, Brychkov-ITSF-2009, Ser-Pow-Arcs-Arctan.tex, AIMS-Math20210491.tex, PAM-Jul-06-2021-0023.tex, Tan-Der-App-Thanks.tex, Qi-Chen-Lim-RNA.tex, Bell-value-elem-funct.tex}, Maclaurin's series expansions of the powers
\begin{gather*}
\biggl(\frac{\arcsin z}{z}\biggr)^{m},\quad
\frac{(\arcsin z)^{m}}{\sqrt{1-z^2}\,},\quad
\biggl(\frac{\arcsinh z}{z}\biggr)^m,\quad
\frac{(\arcsinh z)^{m}}{\sqrt{1+z^2}\,},\\
(\arctan z)^m,\quad
(\arctanh z)^m,\quad
\sin^mz,\quad
\cos^mz, \\
\tan^mz,\quad
\cot^mz,\quad
\sec^mz,\quad
\csc^mz
\end{gather*}
for $m\ge2$ and their history were reviewed, surveyed, established, discussed, and applied. Here now we recite the following two series expansions.

\begin{thm}[{\cite[Theorem~2.1]{Ser-Pow-Arcs-Arctan.tex}}] \label{arcsin-series-expansion-unify-thm}
For $k\in\mathbb{N}$ and $|x|<1$, the function $\bigl(\frac{\arcsin x}{x}\bigr)^{k}$, whose value at $x=0$ is defined to be $1$, has Maclaurin's series expansion
\begin{equation}\label{arcsin-series-expansion-unify}
\biggl(\frac{\arcsin x}{x}\biggr)^{k}
=1+\sum_{m=1}^{\infty} (-1)^m\frac{Q(k,2m)}{\binom{k+2m}{k}}\frac{(2x)^{2m}}{(2m)!},
\end{equation}
where
\begin{equation}\label{Q(m-k)-sum-dfn}
Q(k,m)=\sum_{\ell=0}^{m} \binom{k+\ell-1}{k-1} s(k+m-1,k+\ell-1)\biggl(\frac{k+m-2}{2}\biggr)^{\ell}
\end{equation}
for $k\in\mathbb{N}$ and $m\ge2$.
\end{thm}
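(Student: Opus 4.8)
The plan is to read off the Maclaurin coefficients of $\bigl(\frac{\arcsin x}{x}\bigr)^k$ from the exponential generating function $\te^{t\arcsin x}$, since $(\arcsin x)^k=k!\,[t^k]\te^{t\arcsin x}$. First I would record that $y(x,t)=\te^{t\arcsin x}$ satisfies the linear ODE $(1-x^2)y_{xx}-xy_x=t^2y$ with $y(0,t)=1$ and $y_x(0,t)=t$; writing $\te^{t\arcsin x}=\sum_{n=0}^\infty a_n(t)\frac{x^n}{n!}$ and comparing coefficients turns this ODE into the recursion $a_{n+2}(t)=(n^2+t^2)a_n(t)$ with $a_0(t)=1$ and $a_1(t)=t$. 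Solving it gives the closed forms
\begin{equation*}
a_{2p}(t)=\prod_{j=0}^{p-1}\bigl(t^2+(2j)^2\bigr),\qquad a_{2p+1}(t)=t\prod_{j=0}^{p-1}\bigl(t^2+(2j+1)^2\bigr).
\end{equation*}
Because $\arcsin x$ is odd, only the indices $n\ge k$ with $n\equiv k\pmod2$ survive in $(\arcsin x)^k=k!\sum_n([t^k]a_n(t))\frac{x^n}{n!}$; setting $n=k+2m$ and dividing by $x^k$ yields
\begin{equation*}
\biggl(\frac{\arcsin x}{x}\biggr)^k=k!\sum_{m=0}^\infty\frac{[t^k]a_{k+2m}(t)}{(k+2m)!}\,x^{2m},
\end{equation*}
whose $m=0$ term equals $1$ since $a_k(t)$ is monic of degree $k$.

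The heart of the matter is to evaluate $[t^k]a_{k+2m}(t)$. By the closed forms above this coefficient is, up to the monomial forced by parity, an elementary symmetric function of the consecutive squares appearing in the product, and the pivotal step is to recognise that same symmetric function as a single coefficient of a \emph{recentred} falling factorial. Concretely, writing $N=k+2m-1$ and substituting $t=2s$, I would prove the identity
\begin{equation*}
[t^k]a_{k+2m}(t)=(-1)^m4^m\,[s^{k-1}]\prod_{i=0}^{N-1}\Bigl(s-i+\tfrac{N-1}{2}\Bigr)=(-1)^m4^m\,[s^{k-1}]\Bigl\langle\tfrac{N-1}{2}+s\Bigr\rangle_{N}.
\end{equation*}
The mechanism is that the roots $i-\frac{N-1}{2}$ for $0\le i\le N-1$ are symmetric about the origin, so the product is an even or odd polynomial in $s$ according to the parity of $N$ (equivalently of $k-1$), and extracting its $s^{k-1}$-coefficient reproduces exactly $(-1)^m$ times the elementary symmetric function of squares obtained from $a_{k+2m}(t)$. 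I expect this recentring — matching the two symmetric-function presentations uniformly for even $k$ (an integer root grid centred at an integer) and odd $k$ (a half-integer grid) — to be the main obstacle and the genuine content of the statement. Defining $Q(k,2m):=[s^{k-1}]\bigl\langle\frac{k+2m-2}{2}+s\bigr\rangle_{k+2m-1}$, the coefficient of $x^{2m}$ becomes $(-1)^m\frac{4^m}{(2m)!}\frac{Q(k,2m)}{\binom{k+2m}{k}}$, and $4^mx^{2m}=(2x)^{2m}$ recovers the shape of~\eqref{arcsin-series-expansion-unify}.

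It then remains to check that this $Q(k,2m)$ agrees with~\eqref{Q(m-k)-sum-dfn}, and here the Stirling numbers of the first kind enter routinely. I would expand the falling factorial through the standard relation $\langle y\rangle_N=\sum_{j=0}^N s(N,j)y^j$ — equivalent to the generating function recorded in Section~1 — substitute $y=\frac{N-1}{2}+s$, and read off the $s^{k-1}$-coefficient by the binomial theorem via $[s^{k-1}]\bigl(\frac{N-1}{2}+s\bigr)^j=\binom{j}{k-1}\bigl(\frac{N-1}{2}\bigr)^{j-k+1}$. Reindexing $j=k+\ell-1$ with $N=k+2m-1$ collapses the double sum into $\sum_{\ell=0}^{2m}\binom{k+\ell-1}{k-1}s(k+2m-1,k+\ell-1)\bigl(\frac{k+2m-2}{2}\bigr)^\ell$, which is precisely $Q(k,2m)$, closing the argument. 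As a cross-check one may instead run Fa\`a di Bruno on $(\arcsinh z)^k=[\ln(1+w)]^k$ with $w=\sqrt{1+z^2}+z-1$ and $z=\ti x$, using the value of $\bell_{n,k}$ at the sequence of derivatives of $\ln(1+w)$; but the generating-function route above makes the decisive $t^2$-recursion, and hence the squares that ultimately become the centred falling factorial, the most transparent.
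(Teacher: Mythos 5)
Your proposal is correct and follows essentially the same route as the paper: the paper expands $\cosh(\alpha\arcsin x)$ and $\sinh(\alpha\arcsin x)$ (the even and odd parts of your $\te^{t\arcsin x}$) via the same ODE recursion, converts the resulting products $\prod\bigl(\ell^2+\alpha^2\bigr)$ and $\prod\bigl[(2\ell-1)^2+\alpha^2\bigr]$ into Stirling-number sums through exactly your centred falling factorials, namely $\langle\ti\alpha+k-1\rangle_{2k-1}$ and $\bigl\langle\ti\alpha/2+k-\tfrac12\bigr\rangle_{2k}$ in Lemmas~\ref{square-sum-prod-lem} and~\ref{square-sum-prod-odd-lem}, and then equates coefficients of $\alpha^{2k}$ and $\alpha^{2k+1}$ in Section~\ref{Recovering-Sec-arccos}. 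The only real difference is presentational: you treat the integer and half-integer root grids uniformly through one recentred falling factorial, whereas the paper handles the odd-square product separately via the Gauss multiplication formula.
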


\begin{thm}[{\cite[Theorem~5.1]{Ser-Pow-Arcs-Arctan.tex}}] \label{arcsinh-identity-thm}
For $k\in\mathbb{N}$ and $|x|<\infty$, the function $\bigl(\frac{\arcsinh x}{x}\bigr)^{k}$, whose value at $x=0$ is defined to be $1$, has Maclaurin's series expansion
\begin{equation}\label{arcsinh-series-expansion}
\biggl(\frac{\arcsinh x}{x}\biggr)^k
=1+\sum_{m=1}^{\infty}\frac{Q(k,2m)}{\binom{k+2m}{k}} \frac{(2x)^{2m}}{(2m)!},
\end{equation}
where $Q(k,2m)$ is given by~\eqref{Q(m-k)-sum-dfn}.
\end{thm}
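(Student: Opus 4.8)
The plan is to derive Theorem~\ref{arcsinh-identity-thm} directly from Theorem~\ref{arcsin-series-expansion-unify-thm} through the elementary link between the inverse hyperbolic sine and the inverse sine, rather than repeating the full product-and-Bell-polynomial machinery. Starting from $\arcsinh z=\ln\bigl(z+\sqrt{z^2+1}\,\bigr)$ and $\arcsin z=-\ti\ln\bigl(\ti z+\sqrt{1-z^2}\,\bigr)$, a short computation gives the identity
\begin{equation*}
\arcsinh x=-\ti\arcsin(\ti x),
\qquad\text{so that}\qquad
\frac{\arcsinh x}{x}=\frac{\arcsin(\ti x)}{\ti x}.
\end{equation*}
Hence the single conceptual step is to substitute the purely imaginary argument $\ti x$ for the real variable in the arcsine expansion~\eqref{arcsin-series-expansion-unify}.

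Performing this substitution, I would write
\begin{equation*}
\biggl(\frac{\arcsinh x}{x}\biggr)^{k}
=\biggl(\frac{\arcsin(\ti x)}{\ti x}\biggr)^{k}
=1+\sum_{m=1}^{\infty}(-1)^m\frac{Q(k,2m)}{\binom{k+2m}{k}} \frac{(2\ti x)^{2m}}{(2m)!},
\end{equation*}
and the only algebra required is the simplification $(2\ti x)^{2m}=2^{2m}\ti^{2m}x^{2m}=(-1)^m(2x)^{2m}$. The factor $(-1)^m$ arising from $\ti^{2m}$ then cancels exactly the alternating factor $(-1)^m$ carried by~\eqref{arcsin-series-expansion-unify}, converting the alternating arcsine series into the all-positive series~\eqref{arcsinh-series-expansion} with precisely the same coefficients $Q(k,2m)$ from~\eqref{Q(m-k)-sum-dfn}. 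No independent evaluation of the Stirling-number sum is needed, which is the principal economy of this route.

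The delicate point, and the one I expect to be the real obstacle, is justifying the insertion of a complex argument into a series established only for real $x$ and pinning down the consequent range of validity. Since~\eqref{arcsin-series-expansion-unify} holds and represents an analytic function for $|x|<1$, the substitution $x\mapsto\ti x$ is legitimate whenever $|\ti x|=|x|<1$; this disc is exactly the one forced by the branch points of $\arcsinh$ at $z=\pm\ti$, so the identity is secured throughout the interval of convergence of the right-hand side. Confirming that this is indeed the intended range, and arguing the boundary or any extension beyond it, is therefore the step that warrants the most care.
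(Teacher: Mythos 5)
Your proposal is correct and is essentially the paper's own argument: at the end of Section~\ref{Recovering-Sec-arccos} the paper obtains~\eqref{arcsinh-series-expansion} from~\eqref{arcsin-series-expansion-unify} precisely via the relation $\arcsinh t=-\ti\arcsin(\ti t)$, with $\ti^{2m}=(-1)^m$ absorbing the alternating sign exactly as you describe. Your caveat about the range of validity is well taken --- the substitution only secures $|x|<1$, and the stated range $|x|<\infty$ cannot hold anyway because $\arcsinh$ has branch points at $\pm\ti$ --- but that is a defect of the theorem as quoted, not of your proof.
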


In the papers~\cite{Ser-Pow-Arcs-Arctan.tex, AIMS-Math20210491.tex}, the series expansion~\eqref{arcsin-series-expansion-unify} has been applied to derive closed-form formulas for specific partial Bell polynomials and to establish series representations of the generalized logsine function. These results are needed and considered in~\cite{Davydychev-Kalmykov-2001, Kalmykov-Sheplyakov-lsjk-2005, Oertel-arXiv-2010.00746v2} respectively.
\par
In the community of mathematics, the circular constant Pi has attracted a number of mathematicians spending long time and utilizing many methods to calculate it. The setting-up of the international Pi Day is the best demonstration of the importance of the circular constant Pi. Taking $x=\frac{\sqrt{2}\,}{2}$ in~\eqref{arcsin-series-expansion-unify} produces the series representation
\begin{equation}\label{arcsinh-series-pi}
\biggl(\frac{\pi}{2\sqrt{2}\,}\biggr)^{k}
=1+k!\sum_{m=1}^{\infty} (-1)^m2^m\frac{Q(k,2m)}{(k+2m)!}.
\end{equation}
\par
In this paper, by virtue of expansions of two finite products of finitely many square sums
\begin{equation*}
\prod_{\ell=1}^{k}\bigl(\ell^2+\alpha^2\bigr) \quad\text{and}\quad
\prod_{\ell=1}^{k}\bigl[(2\ell-1)^2+\alpha^2\bigr]
\end{equation*}
for $k\in\mathbb{N}$ in Lemmas~\ref{square-sum-prod-lem} and~\ref{square-sum-prod-odd-lem} below, with the aid of Taylor's series expansions around $x=1^-$ of the functions $\cosh(\alpha\arccos x)$ and $\cos(\alpha\arccos x)$ in Lemma~\ref{cosh-sinh-arcsin-arccos-thm}, and in the light of properties of partial Bell polynomials selected in Lemma~\ref{Partial-Bell-Conclusions-Lem}, we will
\begin{enumerate}
\item
establish Taylor's series expansions around $x=1^-$ of the functions
\begin{equation*}
\biggl[\frac{(\arccos x)^{2}}{2(1-x)}\biggr]^\alpha \quad\text{and}\quad \biggl[\frac{(\arccosh x)^{2}}{2(1-x)}\biggr]^k
\end{equation*}
for $\alpha\in\mathbb{R}$ and $k\in\mathbb{N}$ in terms of $Q(k,m)$ in Theorems~\ref{arccos-ser-expan-thm} and~\ref{arccos-Taylor-alpha-THM};
\item
present an explicit formula of the specific partial Bell polynomials
\begin{equation*}
\bell_{m,k}\biggl(-\frac{1}{12},\frac{2}{45},-\frac{3}{70}, \frac{32}{525},-\frac{80}{693}, \dotsc, \frac{(2m-2k+2)!!}{(2m-2k+4)!} Q(2,2m-2k+2)\biggr)
\end{equation*}
for $m\ge k\in\mathbb{N}$ in Theorem~\ref{arccos-squar-(1-x)-bell-thm};
\item
derive several combinatorial identities involving the Stirling numbers of the first kind $s(n,k)$ in Lemmas~\ref{square-sum-prod-lem} and~\ref{square-sum-prod-odd-lem}, in Corollaries~\ref{comb-id-arccos-ven=pow-cor} and~\ref{Last-v2-comb-id-arcos-Cor}, and in the proof of Theorem~\ref{arccos-squar-(1-x)-bell-thm};
\item
demonstrate several series representations of the circular constant $\pi$ and its powers $\pi^\alpha$ for $\alpha\in\mathbb{R}$ in Corollaries~\ref{series-approx-pi-first2cor} and~\ref{Pi-Taylor-alpha-Cor};
\item
recover Maclaurin's series expansions~\eqref{arcsin-series-expansion-unify} and~\eqref{arcsinh-series-expansion} in Theorem~\ref{arcsin-series-expansion-unify-thm} and~\ref{arcsinh-identity-thm}; and
\item
also deduce other useful, meaningful, and significant conclusions in Corollaries~\ref{Minus-sh-square-ser-COR}, \ref{COR-4-Deriv}, \ref{arccosh-squ-deriv-form2-cor}, \ref{arccosH-square-ser-expan-thm}, and~\ref{arccos-odd-power-deriv-thm}, in Remarks~\ref{Rem3.3Q(m-k)-Probl} and~\ref{Rem4.1-Pi-Speed}, and elsewhere in this paper.
\end{enumerate}

\section{Important lemmas}

For attaining our aims mentioned just now, we need the following four important lemmas.

\begin{lem}\label{square-sum-prod-lem}
For $k\in\mathbb{N}$ and $\alpha\in\mathbb{C}\setminus\{0\}$, we have
\begin{equation}\label{square-sum-first-stirl-eq}
\prod_{\ell=1}^{k}\bigl(\ell^2+\alpha^2\bigr)
=(-1)^{k}\sum_{j=0}^{k}(-1)^{j}\Biggl[\sum_{\ell=2j+1}^{2k+1}\binom{\ell}{2j+1}s(2k+1,\ell)k^{\ell-2j-1}\Biggr]\alpha^{2j}
\end{equation}
and
\begin{equation}\label{square-sum-first-alpha=0}
\sum_{\ell=0}^{2k}(\ell+1) s(2k+1,\ell+1) k^{\ell}=(-1)^{k}(k!)^2.
\end{equation}
For $0\le j\le k-1$, we have
\begin{equation}\label{Stirl-first-ID}
\sum_{\ell=2j+1}^{2k-1}\binom{\ell}{2j}s(2k-1,\ell)(k-1)^{\ell-2j}=-s(2k-1,2j).
\end{equation}
\end{lem}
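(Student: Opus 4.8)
The plan is to prove all three identities from a single structural fact: a product of an odd number of shifted linear factors that is symmetric about the point of evaluation is an odd function, so that all of its even-degree Taylor coefficients vanish. The only analytic input needed is the expansion of the falling factorial in the Stirling numbers of the first kind. Indeed, from the generating function recalled in the preliminaries one has $\te^{x\ln(1+t)}=\sum_{\ell\ge0}x^\ell\frac{[\ln(1+t)]^\ell}{\ell!}=\sum_{\ell\ge0}x^\ell\sum_{n\ge\ell}s(n,\ell)\frac{t^n}{n!}$, and comparing coefficients of $\frac{t^n}{n!}$ with the Newton expansion $(1+t)^x=\sum_{n\ge0}\langle x\rangle_n\frac{t^n}{n!}$ yields $\langle x\rangle_n=\sum_{\ell=0}^{n}s(n,\ell)x^\ell$, which I shall use repeatedly.

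For \eqref{square-sum-first-stirl-eq}, first I would factor $\ell^2+\alpha^2=(\ell+\ti\alpha)(\ell-\ti\alpha)$ and form the symmetric product of $2k+1$ consecutive factors centred at $\ti\alpha$, namely $\prod_{\ell=-k}^{k}(\ti\alpha+\ell)=\langle \ti\alpha+k\rangle_{2k+1}$. Splitting off the middle factor $\ti\alpha$ (the term $\ell=0$) and pairing $\ell$ with $-\ell$ gives $\langle \ti\alpha+k\rangle_{2k+1}=\ti\alpha\prod_{\ell=1}^{k}\bigl((\ti\alpha)^2-\ell^2\bigr)=(-1)^k\ti\alpha\prod_{\ell=1}^{k}(\ell^2+\alpha^2)$, so that $\prod_{\ell=1}^{k}(\ell^2+\alpha^2)=\frac{(-1)^k}{\ti\alpha}\langle \ti\alpha+k\rangle_{2k+1}$. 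Next I would expand $\langle \ti\alpha+k\rangle_{2k+1}=\sum_{\ell=0}^{2k+1}s(2k+1,\ell)(k+\ti\alpha)^\ell$, apply the binomial theorem to $(k+\ti\alpha)^\ell=\sum_{p=0}^{\ell}\binom{\ell}{p}k^{\ell-p}(\ti\alpha)^p$, divide by $\ti\alpha$, and collect powers of $\alpha$. Because the product on the left is real and even in $\alpha$, only the terms with $p-1$ even survive; writing $p=2j+1$ turns $(\ti\alpha)^{p-1}$ into $(-1)^j\alpha^{2j}$, the constraint $p\le\ell\le 2k+1$ forces $0\le j\le k$ and $2j+1\le\ell\le 2k+1$, and after swapping the order of summation this is exactly \eqref{square-sum-first-stirl-eq}.

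Identity \eqref{square-sum-first-alpha=0} would then follow at once by setting $\alpha=0$: the left-hand side of \eqref{square-sum-first-stirl-eq} becomes $\prod_{\ell=1}^{k}\ell^2=(k!)^2$, while on the right only the $j=0$ term remains, giving $(-1)^k\sum_{\ell=1}^{2k+1}\ell\,s(2k+1,\ell)k^{\ell-1}$; reindexing $\ell\mapsto\ell+1$ and multiplying through by $(-1)^k$ yields \eqref{square-sum-first-alpha=0}. For \eqref{Stirl-first-ID} I would expand $\langle x\rangle_{2k-1}=\sum_{\ell=0}^{2k-1}s(2k-1,\ell)x^\ell$ about $x=k-1$ by putting $x=(k-1)+y$, so that the coefficient of $y^{2j}$ equals $\sum_{\ell=2j}^{2k-1}\binom{\ell}{2j}s(2k-1,\ell)(k-1)^{\ell-2j}$, whose $\ell=2j$ term is exactly $s(2k-1,2j)$. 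Thus \eqref{Stirl-first-ID} is equivalent to the vanishing of this coefficient of $y^{2j}$. But $\langle (k-1)+y\rangle_{2k-1}=\prod_{m=-(k-1)}^{k-1}(y+m)=y\prod_{m=1}^{k-1}(y^2-m^2)$ is an odd polynomial in $y$, so all of its even-degree coefficients are zero, which is precisely what is needed.

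The routine but error-prone part throughout is the sign and index bookkeeping — in particular keeping track of the factor $(-1)^k$, the shift by $k$ inside the falling factorial, and the parity substitution $p=2j+1$ in the first identity. The one genuinely substantive point, which I would state explicitly rather than assume, is the justification that only even powers of $\alpha$ occur in \eqref{square-sum-first-stirl-eq}; this is the assertion that $\langle \ti\alpha+k\rangle_{2k+1}$ is an odd function of $\alpha$, and it is the very same oddness phenomenon (now for length $2k-1$ and centre $k-1$) that furnishes \eqref{Stirl-first-ID}. Once that symmetry is isolated, the three identities are three facets of one computation.
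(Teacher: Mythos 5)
Your proposal is correct, and for the main identity \eqref{square-sum-first-stirl-eq} it is structurally parallel to the paper's proof: both reduce the product to a falling factorial of $\ti\alpha$ shifted by $k$ (respectively $k-1$), expand it in the Stirling numbers of the first kind via $\langle z\rangle_n=\sum_{\ell}s(n,\ell)z^\ell$ (the paper's formula \eqref{(12.1)-Quaintance}), apply the binomial theorem about the integer shift, and sort terms by the parity of the power of $\ti$. The differences lie in how the ingredients are obtained, and one of them is genuinely a different route. You reach the representation $\prod_{\ell=1}^{k}(\ell^2+\alpha^2)=\frac{(-1)^k}{\ti\alpha}\langle\ti\alpha+k\rangle_{2k+1}$ by directly pairing the factors of the symmetric product $\prod_{\ell=-k}^{k}(\ti\alpha+\ell)$, which is more elementary than the paper's passage through the Pochhammer symbol and the Gamma function in \eqref{square-sum-prod-id}; you also work at length $2k+1$ from the outset, whereas the paper works at length $2k-1$ and must afterwards shift $k\mapsto k+1$ and divide by $\alpha^2$. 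More substantively, your derivation of \eqref{Stirl-first-ID} differs from the paper's: the paper extracts it from the vanishing of the imaginary part \eqref{imag-part-eq} of the complex expansion, while you obtain it from the observation that $\langle(k-1)+y\rangle_{2k-1}=y\prod_{m=1}^{k-1}(y^2-m^2)$ is an odd polynomial in $y$, so all of its even-degree coefficients vanish --- a purely real argument that dispenses with complex numbers entirely and makes the underlying symmetry transparent; what this buys is a cleaner, self-contained proof of that combinatorial identity, at the cost of running the symmetric-product computation twice (once at length $2k+1$ with centre $k$, once at length $2k-1$ with centre $k-1$). Your route to \eqref{square-sum-first-alpha=0} by setting $\alpha=0$ is legitimate because both sides of \eqref{square-sum-first-stirl-eq} are polynomials in $\alpha$, and matches the paper's limit $\alpha\to0$. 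The one step to write out explicitly, as you yourself flag, is why only the powers $(\ti\alpha)^{p}$ with $p$ odd survive after division by $\ti\alpha$ (including the disposal of the $p=0$ term, whose coefficient is $\langle k\rangle_{2k+1}=0$): the discarded terms form the imaginary part of an identity between real quantities, which is exactly the paper's mechanism as well.
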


\begin{proof}
In~\cite[p.~165, (12.1)]{Quaintance-Gould-Stirling-B}, there exists the formula
\begin{equation}\label{(12.1)-Quaintance}
n!\binom{z}{n}=\sum_{k=0}^{n}s(n,k)z^k, \quad z\in\mathbb{C}, \quad n\ge0.
\end{equation}
\par
It is not difficult to verify that
\begin{equation}\label{square-sum-prod-id}
\begin{aligned}
\prod_{\ell=1}^{k}\bigl[(\ell-1)^2+\alpha^2\bigr]
&=\prod_{\ell=0}^{k-1}\bigl(\ell^2+\alpha^2\bigr)\\
&=(-1)^k\frac{(\ti \alpha)_k}{(\ti \alpha+1)_{-k}}\\
&=(-1)^k\ti \alpha\frac{\Gamma(\ti \alpha+k)}{\Gamma(\ti \alpha-k+1)}\\
&=(-1)^k\ti \alpha(2k-1)!\binom{\ti \alpha+k-1}{2k-1}
\end{aligned}
\end{equation}
for $\alpha\in\mathbb{C}\setminus\{0\}$ and $k\in\mathbb{N}$, where $\ti=\sqrt{-1}\,$ is the imaginary unit and $\alpha\in\mathbb{C}$.
Substituting the formula~\eqref{(12.1)-Quaintance} into~\eqref{square-sum-prod-id} results in
\begin{align*}
\prod_{\ell=0}^{k-1}\bigl(\ell^2+\alpha^2\bigr)
&=(-1)^k\ti \alpha\sum_{\ell=0}^{2k-1}s(2k-1,\ell)(\ti \alpha+k-1)^\ell\\
&=(-1)^k\ti \alpha\sum_{\ell=0}^{2k-1}s(2k-1,\ell)\sum_{j=0}^{\ell}\binom{\ell}{j}(\ti \alpha)^j(k-1)^{\ell-j}\\
&=(-1)^k\sum_{j=0}^{2k-1}\Biggl[\sum_{\ell=j}^{2k-1}\binom{\ell}{j}s(2k-1,\ell)(k-1)^{\ell-j}\Biggr](\ti \alpha)^{j+1}\\
&=(-1)^k\sum_{j=0}^{2k-1}\Biggl[\sum_{\ell=j}^{2k-1}\binom{\ell}{j}s(2k-1,\ell)(k-1)^{\ell-j}\Biggr]\alpha^{j+1} \cos\frac{(j+1)\pi}2\\
&\quad+\ti(-1)^k\sum_{j=0}^{2k-1}\Biggl[\sum_{\ell=j}^{2k-1}\binom{\ell}{j}s(2k-1,\ell)(k-1)^{\ell-j}\Biggr]\alpha^{j+1} \sin\frac{(j+1)\pi}2
\end{align*}
for $\alpha\in\mathbb{C}\setminus\{0\}$ and $k\in\mathbb{N}$, where we used the identity
\begin{equation}\label{imag-power-k-id}
\ti^k=\cos\frac{k\pi}2+\ti\sin\frac{k\pi}2, \quad k\ge0.
\end{equation}
As a result, equating the real and imaginary parts, we obtain
\begin{equation}\label{square-sum-stirl-eq}
\prod_{\ell=0}^{k-1}\bigl(\ell^2+\alpha^2\bigr)
=(-1)^k\sum_{j=0}^{2k-1}\cos\frac{(j+1)\pi}2\Biggl[s(2k-1,j) +\sum_{\ell=j+1}^{2k-1}\binom{\ell}{j}s(2k-1,\ell)(k-1)^{\ell-j}\Biggr]\alpha^{j+1}
\end{equation}
and
\begin{equation}\label{imag-part-eq}
\sum_{j=0}^{2k-1}\sin\frac{(j+1)\pi}2\Biggl[s(2k-1,j) +\sum_{\ell=j+1}^{2k-1}\binom{\ell}{j}s(2k-1,\ell)(k-1)^{\ell-j}\Biggr]\alpha^{j+1} =0.
\end{equation}
\par
Since $\cos(j\pi)=(-1)^j$ and $\cos\frac{(2j+1)\pi}2=0$ for $j\in\mathbb{Z}$, the equality~\eqref{square-sum-stirl-eq} can be simplified as~\eqref{square-sum-first-stirl-eq}.
\par
Taking $\alpha\to0$ in~\eqref{square-sum-first-stirl-eq} reduces to~\eqref{square-sum-first-alpha=0}.
\par
Since $\sin(j\pi)=0$ and $\sin\frac{(2j+1)\pi}2=(-1)^j$ for $j\in\mathbb{Z}$, the equality~\eqref{imag-part-eq} becomes
\begin{equation*}
\sum_{j=0}^{k-1}(-1)^j\Biggl[s(2k-1,2j) +\sum_{\ell=2j+1}^{2k-1}\binom{\ell}{2j}s(2k-1,\ell)(k-1)^{\ell-2j}\Biggr]\alpha^{2j+1} =0.
\end{equation*}
Further regarding $\alpha$ as a variable leads to
\begin{equation*}
(-1)^j\Biggl[s(2k-1,2j) +\sum_{\ell=2j+1}^{2k-1}\binom{\ell}{2j}s(2k-1,\ell)(k-1)^{\ell-2j}\Biggr]=0
\end{equation*}
which is equivalent to the combinatorial identity~\eqref{Stirl-first-ID}.
The proof of Lemma~\ref{square-sum-prod-lem} is complete.
\end{proof}

\begin{lem}\label{square-sum-prod-odd-lem}
For $k\in\mathbb{N}$ and $\alpha\in\mathbb{C}\setminus\{0\}$, we have
\begin{equation}\label{square-sum-odd-id}
\prod_{\ell=1}^{k}\bigl[(2\ell-1)^2+\alpha^2\bigr]
=(-1)^{k}2^{2k} \sum_{j=0}^{2k} (-1)^j \Biggl[\sum_{\ell=2j}^{2k}\frac{s(2k,\ell)}{2^\ell}\binom{\ell}{2j} (2k-1)^{\ell-2j}\Biggr]\alpha^{2j} \end{equation}
and
\begin{equation}\label{square-sum-odd-alpha=0}
\sum_{\ell=0}^{2k}s(2k,\ell)\biggl(k-\frac{1}{2}\biggr)^{\ell}
=(-1)^{k}\biggl[\frac{(2k-1)!!}{2^{k}}\biggr]^2.
\end{equation}
For $0\le j<k\in\mathbb{N}$, we have
\begin{equation}\label{first-stirl-2id}
\sum_{\ell=2j+1}^{2k}\binom{\ell}{2j+1} s(2k,\ell)\biggl(k-\frac12\biggr)^{\ell}=0.
\end{equation}
\end{lem}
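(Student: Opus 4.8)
The plan is to imitate the proof of Lemma~\ref{square-sum-prod-lem}, simply replacing the complex factorization of $\ell^2+\alpha^2$ by that of $(2\ell-1)^2+\alpha^2$. First I would write
\begin{equation*}
(2\ell-1)^2+\alpha^2=(2\ell-1+\ti\alpha)(2\ell-1-\ti\alpha)
\end{equation*}
and pull a factor $2$ out of each linear term, so that in terms of the rising factorial~\eqref{rising-Factorial},
\begin{equation*}
\prod_{\ell=1}^{k}\bigl[(2\ell-1)^2+\alpha^2\bigr]
=2^{2k}\biggl(\frac{1+\ti\alpha}{2}\biggr)_k\biggl(\frac{1-\ti\alpha}{2}\biggr)_k.
\end{equation*}

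The key structural step, which I expect to be the main obstacle, is to merge these two half-integer-shifted rising factorials into a single falling factorial of length $2k$. Applying the reflection identity $(\beta)_k=(-1)^k\langle-\beta\rangle_k$ to the second factor and rewriting the first as $\bigl(\frac{1+\ti\alpha}{2}\bigr)_k=\bigl\langle\frac{\ti\alpha+2k-1}{2}\bigr\rangle_k$, the two blocks of $k$ factors chain together with common difference $1$, so that
\begin{equation*}
\biggl(\frac{1+\ti\alpha}{2}\biggr)_k\biggl(\frac{1-\ti\alpha}{2}\biggr)_k
=(-1)^k\Bigl\langle\tfrac{\ti\alpha+2k-1}{2}\Bigr\rangle_{2k}
=(-1)^k(2k)!\binom{(\ti\alpha+2k-1)/2}{2k}.
\end{equation*}
I would verify this just as~\eqref{square-sum-prod-id} was verified, via the Gamma-ratio $\Gamma\bigl(\frac{\ti\alpha+2k+1}{2}\bigr)\big/\Gamma\bigl(\frac{\ti\alpha-2k+1}{2}\bigr)$, and confirm it on the case $k=1$, where both sides equal $\frac{1+\alpha^2}{4}$.

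Then I substitute the Stirling expansion~\eqref{(12.1)-Quaintance}, namely $\langle z\rangle_{2k}=\sum_{\ell=0}^{2k}s(2k,\ell)z^\ell$ with $z=\frac{\ti\alpha+2k-1}{2}$, expand $z^\ell=\frac1{2^\ell}\sum_{j=0}^{\ell}\binom{\ell}{j}(\ti\alpha)^j(2k-1)^{\ell-j}$ by the binomial theorem, and interchange the order of summation to collect powers of $\ti\alpha$, obtaining
\begin{equation*}
\prod_{\ell=1}^{k}\bigl[(2\ell-1)^2+\alpha^2\bigr]
=(-1)^k2^{2k}\sum_{j=0}^{2k}(\ti\alpha)^j
\Biggl[\sum_{\ell=j}^{2k}\frac{s(2k,\ell)}{2^\ell}\binom{\ell}{j}(2k-1)^{\ell-j}\Biggr].
\end{equation*}
Invoking~\eqref{imag-power-k-id} and equating real and imaginary parts exactly as in Lemma~\ref{square-sum-prod-lem}, the even-index terms, for which $\cos\frac{(2j+1)\pi}2=0$ and $\cos(j\pi)=(-1)^j$, produce~\eqref{square-sum-odd-id} (the inner sums are empty, hence zero, for $j>k$, so extending the outer index to $2k$ is harmless). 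Letting $\alpha\to0$ leaves only the $j=0$ term and gives~\eqref{square-sum-odd-alpha=0} once I identify $\sum_\ell s(2k,\ell)(2k-1)^\ell/2^\ell=\sum_\ell s(2k,\ell)\bigl(k-\frac12\bigr)^\ell$. Finally, the odd-index terms, for which $\sin\frac{(2j+1)\pi}2=(-1)^j$ and $\sin(j\pi)=0$, force each coefficient to vanish; regarding $\alpha$ as a free variable, multiplying the resulting relation by $(2k-1)^{2j+1}$ and using $(2k-1)^\ell/2^\ell=\bigl(k-\frac12\bigr)^\ell$ converts it into~\eqref{first-stirl-2id}, where the step uses $2k-1\ne0$ for $k\in\mathbb{N}$.
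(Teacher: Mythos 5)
Your proof is correct, and it reaches the lemma by a genuinely different route at the one step that matters. The paper does not factor $(2\ell-1)^2+\alpha^2$ directly; instead it writes $\prod_{\ell=1}^{k}\bigl[(2\ell-1)^2+\alpha^2\bigr]$ as the quotient $\prod_{\ell=1}^{2k}\bigl(\ell^2+\alpha^2\bigr)\big/\bigl(4^k\prod_{\ell=1}^{k}[(\alpha/2)^2+\ell^2]\bigr)$, converts numerator and denominator to Gamma-function ratios via the already-established identities in~\eqref{square-sum-prod-id}, and then invokes the Gauss multiplication formula twice to collapse the resulting quotient of Gamma functions into $(2k)!\binom{\ti\alpha/2+k-1/2}{2k}$. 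You instead factor each quadratic into complex linear factors, recognize the two half-integer-shifted Pochhammer symbols in $2^{2k}\bigl(\frac{1+\ti\alpha}{2}\bigr)_k\bigl(\frac{1-\ti\alpha}{2}\bigr)_k$, and chain them into the single falling factorial $(-1)^k\bigl\langle\frac{\ti\alpha+2k-1}{2}\bigr\rangle_{2k}$ by an elementary rearrangement of the $2k$ factors, which I have checked is valid. Both routes land on exactly the same expression $(-1)^k2^{2k}\sum_{\ell=0}^{2k}s(2k,\ell)\bigl(\frac{\ti\alpha}{2}+k-\frac{1}{2}\bigr)^\ell$, after which the binomial expansion, the interchange of sums, and the separation into real and imaginary parts via~\eqref{imag-power-k-id} proceed identically in both arguments. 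Your route is more elementary---it needs neither the duplication formula nor the recycled Gamma identities of Lemma~\ref{square-sum-prod-lem}---while the paper's route buys a uniform Gamma-function template carried over from the preceding lemma. Your closing details (the empty inner sums for $j>k$, the identification $(2k-1)^\ell/2^\ell=\bigl(k-\frac12\bigr)^\ell$, and the multiplication by $(2k-1)^{2j+1}\ne0$) are all sound.
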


\begin{proof}
The identities in~\eqref{square-sum-prod-id} can be rearranged as
\begin{align*}
\prod_{\ell=1}^{k}\bigl(\ell^2+\alpha^2\bigr)
&=(-1)^{k+1}\frac{(\ti \alpha)_{k+1}}{\alpha^2(\ti \alpha+1)_{-(k+1)}}\\
&=(-1)^{k+1}\ti \frac{\Gamma(\ti \alpha+k+1)}{\alpha\Gamma(\ti \alpha-k)}\\
&=(-1)^{k+1}\ti\frac{(2k+1)!}{\alpha}\binom{\ti \alpha+k}{2k+1}
\end{align*}
for $\alpha\in\mathbb{C}\setminus\{0\}$ and $k\in\mathbb{N}$.
By this, we acquire
\begin{equation}
\begin{aligned}\label{suqare-sum-gamma}
\prod_{\ell=1}^{k}\bigl[(2\ell-1)^2+\alpha^2\bigr]
&=\frac{\prod_{\ell=1}^{2k}\bigl(\ell^2+\alpha^2\bigr)} {\prod_{\ell=1}^{k}[\alpha^2+(2\ell)^2]}\\
&=\frac{\prod_{\ell=1}^{2k}\bigl(\ell^2+\alpha^2\bigr)} {4^k\prod_{\ell=1}^{k}[(\alpha/2)^2+\ell^2]}\\
&=\frac{1}{4^k} (-1)^{2k+1}\ti \frac{\Gamma(\ti \alpha+2k+1)}{\alpha\Gamma(\ti \alpha-2k)} \frac{1}{(-1)^{k+1}\ti} \frac{\alpha\Gamma(\ti \alpha/2-k)}{2\Gamma(\ti \alpha/2+k+1)}\\
&=\frac{(-1)^{k}}{2^{2k+1}} \frac{\Gamma(\ti \alpha/2-k)}{\Gamma(\ti \alpha-2k)} \frac{\Gamma(\ti \alpha+2k+1)}{\Gamma(\ti \alpha/2+k+1)}
\end{aligned}
\end{equation}
for $\alpha\in\mathbb{C}\setminus\{0\}$ and $k\in\mathbb{N}$.
Making use of the Gauss multiplication formula
\begin{equation*}%\label{Gauss-Mult-Formula}
\Gamma(nz)=\frac{n^{nz-1/2}}{(2\pi)^{(n-1)/2}}\prod_{k=0}^{n-1}\Gamma\biggl(z+\frac{k}n\biggr)
\end{equation*}
in~\cite[p.~256, 6.1.20]{abram} leads to
\begin{align*}
\frac{\Gamma(\ti \alpha-2k)}{\Gamma(\ti \alpha/2-k)}
&=\frac{1}{\Gamma(\ti \alpha/2-k)} \frac{2^{\ti \alpha-2k-1/2}}{(2\pi)^{1/2}}\Gamma\biggl(\frac{\ti \alpha}{2}-k\biggr) \Gamma\biggl(\frac{\ti \alpha}{2}-k+\frac{1}2\biggr)\\
&=\frac{2^{\ti \alpha-2k-1}}{\sqrt{\pi}\,} \Gamma\biggl(\frac{1+\ti \alpha}{2}-k\biggr)
\end{align*}
and
\begin{align*}
\frac{\Gamma(\ti \alpha+2k+1)}{\Gamma(\ti \alpha/2+k+1)}
&=\frac{(\ti \alpha+2k)}{\Gamma(\ti \alpha/2+k+1)}\Gamma(2(\ti \alpha/2+k))\\
&=\frac{(\ti \alpha+2k)}{\Gamma(\ti \alpha/2+k+1)} \frac{2^{2(\ti \alpha/2+k)-1/2}}{(2\pi)^{1/2}}\Gamma(\ti \alpha/2+k)\Gamma\biggl(\ti \alpha/2+k+\frac{1}2\biggr)\\
&=\frac{2^{\ti \alpha+2k}}{\sqrt{\pi}\,}\Gamma\biggl(\frac{1+\ti \alpha}{2}+k\biggr)
\end{align*}
for $\alpha\in\mathbb{C}\setminus\{0\}$ and $k\in\mathbb{N}$.
Substituting these two equalities into~\eqref{suqare-sum-gamma}, utilizing the formula~\eqref{(12.1)-Quaintance}, interchanging the order of double sums, and employing the identity~\eqref{imag-power-k-id} yield
\begin{align*}
\prod_{\ell=1}^{k}\bigl[(2\ell-1)^2+\alpha^2\bigr]
&=\frac{(-1)^{k}}{2^{2k+1}}\frac{\frac{2^{\ti \alpha+2k}}{\sqrt{\pi}\,}\Gamma\bigl(\frac{1+\ti \alpha}{2}+k\bigr)} {\frac{2^{\ti \alpha-2k-1}}{\sqrt{\pi}\,} \Gamma\bigl(\frac{1+\ti \alpha}{2}-k\bigr)}\\
&=(-1)^{k}2^{2k} \frac{\Gamma\bigl(\frac{1+\ti \alpha}{2}+k\bigr)} {\Gamma\bigl(\frac{1+\ti \alpha}{2}-k\bigr)}\\
&=(-1)^{k}2^{2k}(2k)!\binom{\frac{\ti \alpha}{2}+k-\frac{1}{2}}{2k}\\
&=(-1)^{k}2^{2k}\sum_{\ell=0}^{2k}s(2k,\ell)\biggl(\frac{\ti \alpha}{2}+k-\frac{1}{2}\biggr)^\ell\\
&=(-1)^{k}2^{2k}\sum_{\ell=0}^{2k}\frac{s(2k,\ell)}{2^\ell}\sum_{j=0}^{\ell}\binom{\ell}{j}(\ti \alpha)^j (2k-1)^{\ell-j}\\
&=(-1)^{k}2^{2k}\sum_{j=0}^{2k} \Biggl[\sum_{\ell=j}^{2k}\frac{s(2k,\ell)}{2^\ell}\binom{\ell}{j} (2k-1)^{\ell-j}\Biggr](\ti \alpha)^j\\
&=(-1)^{k}2^{2k}\sum_{j=0}^{2k} \Biggl[\sum_{\ell=j}^{2k}\frac{s(2k,\ell)}{2^\ell}\binom{\ell}{j} (2k-1)^{\ell-j}\Biggr]\biggl(\cos\frac{j\pi}2\biggr)\alpha^j\\
&\quad+\ti(-1)^{k}2^{2k}\sum_{j=0}^{2k} \Biggl[\sum_{\ell=j}^{2k}\frac{s(2k,\ell)}{2^\ell}\binom{\ell}{j} (2k-1)^{\ell-j}\Biggr]\biggl(\sin\frac{j\pi}2\biggr)\alpha^j
\end{align*}
for $\alpha\in\mathbb{C}\setminus\{0\}$ and $k\in\mathbb{N}$.
From the facts that $\cos(j\pi)=(-1)^j$ and $\cos\frac{(2j+1)\pi}2=0$ for $j\in\mathbb{Z}$, equating the above real part and simplifying produce the identity~\eqref{square-sum-odd-id}.
\par
Taking $\alpha\to0$ in~\eqref{square-sum-odd-id} reduces to~\eqref{square-sum-odd-alpha=0}.
\par
From the facts that $\sin(j\pi)=0$ and $\sin\frac{(2j+1)\pi}2=(-1)^j$ for $j\in\mathbb{Z}$, the last imaginary part becomes
\begin{equation*}
\sum_{j=0}^{k-1} (-1)^j\Biggl[\sum_{\ell=2j+1}^{2k}\frac{s(2k,\ell)}{2^\ell}\binom{\ell}{2j+1} (2k-1)^{\ell-2j-1}\Biggr]\alpha^{2j+1}=0.
\end{equation*}
Regarding $\alpha$ as a variable means the identity~\eqref{first-stirl-2id}.
The proof of Lemma~\ref{square-sum-prod-odd-lem} is complete.
\end{proof}

\begin{rem}
The formula~\eqref{(12.1)-Quaintance} can be reformulated as
\begin{equation}\label{rising-s(n-k)-gen}
(z)_n=\sum_{k=0}^{n}(-1)^{n-k}s(n,k)z^k.
\end{equation}
Comparing this with the definition~\eqref{rising-Factorial}, we can regard~\eqref{square-sum-first-stirl-eq} and~\eqref{square-sum-odd-id} in Lemmas~\ref{square-sum-prod-lem} and~\ref{square-sum-prod-odd-lem} as generalizations of the formula~\eqref{rising-s(n-k)-gen}.
\end{rem}

\begin{rem}
The identity~\eqref{square-sum-odd-alpha=0} is a special case of the identity~\eqref{(12.1)-Quaintance} or~\eqref{rising-s(n-k)-gen}.
\par
In the monograph~\cite{Quaintance-Gould-Stirling-B}, we do not find the combinatorial identities~\eqref{square-sum-first-alpha=0}, \eqref{Stirl-first-ID}, and~\eqref{first-stirl-2id}.
\end{rem}

\begin{rem}\label{Rem3.3Q(m-k)-Probl}
The combinatorial identities~\eqref{square-sum-first-alpha=0} and~\eqref{square-sum-odd-alpha=0} can be rearranged as
\begin{equation}\label{square-sum-first-alpha=0-Q}
Q(2,2k)=(-1)^{k}(k!)^2, \quad k\in\mathbb{N}
\end{equation}
and
\begin{equation}\label{square-sum-odd-alpha=0-Q}
Q(1,2k)
=(-1)^{k}\biggl[\frac{(2k-1)!!}{2^{k}}\biggr]^2, \quad k\in\mathbb{N}.
\end{equation}
\par
Due to the trivial result $s(n,n)=1$ for $n\in\mathbb{N}_0$, the combinatorial identities~\eqref{Stirl-first-ID} and~\eqref{first-stirl-2id} can be rearranged as
\begin{equation*}
s(2j+1,2j)=-j(2j+1),\quad
Q(2j+1,2m-1)=0
\end{equation*}
for $m\ge2$ and $j\in\mathbb{N}_0$, and
\begin{equation*}
\sum_{\ell=1}^{2m}\binom{2j+\ell}{2j+1} s(2j+2m,2j+\ell)\biggl(j+m-\frac12\biggr)^{\ell}=0
\end{equation*}
for $j\in\mathbb{N}_0$ and $m\in\mathbb{N}$.
On the other hand, we have
\begin{gather*}
Q(2j,2m)=s(2j+2m-1,2j-1) +2j(j+m-1) s(2j+2m-1,2j)\\
+2j(j+m-1)\sum_{\ell=1}^{2m-1}\frac{1}{\ell+1} \binom{2j+\ell}{2j} s(2j+2m-1,2j+\ell)(j+m-1)^{\ell}
\end{gather*}
for $j,m\in\mathbb{N}$. Can one give a simple form for the quantity
\begin{equation*}
\sum_{\ell=1}^{2m-1}\frac{1}{\ell+1} \binom{2j+\ell}{2j} s(2j+2m-1,2j+\ell)(j+m-1)^{\ell}
\end{equation*}
for $j,m\in\mathbb{N}$?
Can one discover more simple forms, similar to $Q(1,2k)$ and $Q(2,2k)$ in~\eqref{square-sum-first-alpha=0-Q} and~\eqref{square-sum-odd-alpha=0-Q}, of $Q(k,m)$ for some $k\in\mathbb{N}$ and $m\ge2$?
\end{rem}

\begin{lem}\label{cosh-sinh-arcsin-arccos-thm}
For $\alpha\in\mathbb{C}\setminus\{0\}$ and $|x|<1$, we have
\begin{align}
\cosh(\alpha\arcsin x)&=\sum_{k=0}^{\infty}\Biggl(\prod_{\ell=1}^{k}\bigl[4(\ell-1)^2+\alpha^2\bigr]\Biggr)\frac{x^{2k}}{(2k)!}, \label{cosh-arcsin-ser}\\
\sinh(\alpha\arcsin x)&=\alpha\sum_{k=0}^{\infty} \Biggl(\prod_{\ell=1}^{k}\bigl[(2\ell-1)^2+\alpha^2\bigr]\Biggr) \frac{x^{2k+1}}{(2k+1)!}, \label{sinh-arcsin-ser}\\
\cosh(\alpha\arccos x)&=\sum_{k=0}^{\infty}\frac{(-1)^k}{(2k-1)!!} \Biggl(\prod_{\ell=1}^{k}\bigl[(\ell-1)^2+\alpha^2\bigr]\Biggr) \frac{(x-1)^k}{k!} \label{cosh-arccos-ser}\\
\begin{split}\label{cosh-arccos-ser=0}
&=\biggl(\cosh\frac{\alpha\pi}{2}\biggr) \sum_{k=0}^{\infty} \Biggl(\prod_{\ell=1}^{k}\bigl[4(\ell-1)^2+\alpha^2\bigr]\Biggr) \frac{x^{2k}}{(2k)!}\\
&\quad-\alpha\biggl(\sinh\frac{\alpha\pi}{2}\biggr) \sum_{k=0}^{\infty} \Biggl(\prod_{\ell=1}^{k}\bigl[(2\ell-1)^2+\alpha^2\bigr]\Biggr) \frac{x^{2k+1}}{(2k+1)!},
\end{split}\\
\begin{split}
\sinh(\alpha\arccos x)&=\biggl(\sinh\frac{\alpha\pi}{2}\biggr) \sum_{k=0}^{\infty}\Biggl(\prod_{\ell=1}^{k} \bigl[4(\ell-1)^2+\alpha^2\bigr]\Biggr) \frac{x^{2k}}{(2k)!}\\
&\quad -\alpha\biggl(\cosh\frac{\alpha\pi}{2}\biggr) \sum_{k=0}^{\infty} \Biggl(\prod_{\ell=1}^{k}\bigl[(2\ell-1)^2+\alpha^2\bigr]\Biggr) \frac{x^{2k+1}}{(2k+1)!}, \label{sinh-arccos-ser}
\end{split}\\
\cos(\alpha\arcsin x)&=\sum_{k=0}^{\infty}\Biggl(\prod_{\ell=1}^{k}\bigl[4(\ell-1)^2-\alpha^2\bigr]\Biggr)\frac{x^{2k}}{(2k)!}, \label{cos-arcsin-ser}\\
\sin(\alpha\arcsin x)&=\alpha\sum_{k=0}^{\infty} \Biggl(\prod_{\ell=1}^{k}\bigl[(2\ell-1)^2-\alpha^2\bigr]\Biggr) \frac{x^{2k+1}}{(2k+1)!}, \label{sin-arcsin-ser}\\
\cos(\alpha\arccos x)&=\sum_{k=0}^{\infty} \frac{(-1)^k}{(2k-1)!!} \Biggl(\prod_{\ell=1}^{k-1}\bigl[\bigl(\ell-1)^2-\alpha^2\bigr]\Biggr) \frac{(x-1)^k}{k!} \label{cos-arccos-ser}\\
\begin{split}\label{cos-arccos-ser-x=0}
&=\biggl(\cos\frac{\alpha\pi}{2}\biggr) \sum_{k=0}^{\infty} \Biggl(\prod_{\ell=1}^{k}\bigl[4(\ell-1)^2-\alpha^2\bigr]\Biggr) \frac{x^{2k}}{(2k)!}\\
&\quad+\alpha\biggl(\sin\frac{\alpha\pi}{2}\biggr)\sum_{k=0}^{\infty} \Biggl(\prod_{\ell=1}^{k}\bigl[(2\ell-1)^2-\alpha^2\bigr]\Biggr)\frac{x^{2k+1}}{(2k+1)!},
\end{split}\\
\begin{split}\label{sin-arccos-ser}
\sin(\alpha\arccos x)&=\biggl(\sin\frac{\alpha\pi}{2}\biggr) \sum_{k=0}^{\infty}\Biggl(\prod_{\ell=1}^{k}\bigl[4(\ell-1)^2-\alpha^2\bigr]\Biggr)\frac{x^{2k}}{(2k)!}\\
&\quad-\alpha\biggl(\cos\frac{\alpha\pi}{2}\biggr) \sum_{k=0}^{\infty} \Biggl(\prod_{\ell=1}^{k}\bigl[(2\ell-1)^2-\alpha^2\bigr]\Biggr) \frac{x^{2k+1}}{(2k+1)!},
\end{split}
\end{align}
where $(-1)!!=1$ and any empty product is understood to be $1$.
\end{lem}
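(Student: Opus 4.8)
The plan is to reduce all eight expansions to two master computations and then glue them together with the reflection identity $\arccos x=\frac{\pi}{2}-\arcsin x$. The first observation is that the functions $\cosh(\alpha\arcsin x)$, $\sinh(\alpha\arcsin x)$, $\cosh(\alpha\arccos x)$, and $\sinh(\alpha\arccos x)$ all satisfy one and the same linear differential equation. Indeed, differentiating $y=\cosh(\alpha\arcsin x)$ twice and clearing the radical gives
\begin{equation*}
(1-x^2)y''-xy'-\alpha^2y=0,
\end{equation*}
and the identical computation for $\cosh(\alpha\arccos x)$ produces the same equation, since the sign flip in $\frac{\td}{\td x}\arccos x$ is squared away. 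Replacing $\alpha$ by $\ti\alpha$ carries $\cosh$, $\sinh$ into $\cos$, $\sin$ and flips the sign of the zeroth-order term, so the trigonometric functions obey
\begin{equation*}
(1-x^2)y''-xy'+\alpha^2y=0.
\end{equation*}

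First I would treat the four $\arcsin$ expansions~\eqref{cosh-arcsin-ser},~\eqref{sinh-arcsin-ser},~\eqref{cos-arcsin-ser}, and~\eqref{sin-arcsin-ser}. Substituting $y=\sum_{n\ge0}a_nx^n$ and matching the coefficient of $x^n$ yields the two-term recurrence
\begin{equation*}
(n+2)(n+1)a_{n+2}=(n^2\pm\alpha^2)a_n,
\end{equation*}
with the upper sign hyperbolic and the lower trigonometric. As $0$ is an ordinary point, $a_0$ and $a_1$ are free, and parity then selects a single series: $\cosh$ and $\cos$ are even with $a_0=1$, $a_1=0$, while $\sinh$ and $\sin$ are odd with $a_0=0$, $a_1=\alpha$. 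Telescoping over even indices $n=2(\ell-1)$ gives the product $\prod_{\ell=1}^{k}[4(\ell-1)^2\pm\alpha^2]$ over $(2k)!$, and over odd indices $n=2\ell-1$ the product $\prod_{\ell=1}^{k}[(2\ell-1)^2\pm\alpha^2]$ over $(2k+1)!$, which are exactly the claimed coefficients.

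Next I would obtain the expansions about $x=1$ in~\eqref{cosh-arccos-ser} and~\eqref{cos-arccos-ser}. Writing $u=x-1$ and using $1-x^2=-u(2+u)$, the same equation becomes
\begin{equation*}
-u(2+u)y''-(1+u)y'\mp\alpha^2y=0,
\end{equation*}
now with $'=\frac{\td}{\td u}$. Here $u=0$ is a regular singular point, so the coefficient of $y''$ vanishes there and matching the coefficient of $u^k$ no longer couples $a_{k+2}$ but instead produces the first-order recurrence
\begin{equation*}
(k+1)(2k+1)b_{k+1}=-(k^2\pm\alpha^2)b_k,
\end{equation*}
whose single free parameter $b_0$ is fixed by $y(1)=\cosh0=1$. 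Telescoping with $\prod_{j=0}^{k-1}(j+1)=k!$ and $\prod_{j=0}^{k-1}(2j+1)=(2k-1)!!$ yields $b_k=(-1)^k\prod_{\ell=1}^{k}[(\ell-1)^2\pm\alpha^2]\big/[k!\,(2k-1)!!]$, which, under the conventions $(-1)!!=1$ and empty product $=1$, reproduces the two $(x-1)$-expansions.

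Finally, the four $x$-power $\arccos$ expansions~\eqref{cosh-arccos-ser=0},~\eqref{sinh-arccos-ser},~\eqref{cos-arccos-ser-x=0}, and~\eqref{sin-arccos-ser} follow with no new computation from $\alpha\arccos x=\frac{\alpha\pi}{2}-\alpha\arcsin x$ and the addition theorems; for instance
\begin{equation*}
\cosh(\alpha\arccos x)=\cosh\frac{\alpha\pi}{2}\cosh(\alpha\arcsin x)-\sinh\frac{\alpha\pi}{2}\sinh(\alpha\arcsin x),
\end{equation*}
into which one substitutes~\eqref{cosh-arcsin-ser} and~\eqref{sinh-arcsin-ser}; the remaining three are identical with $\sinh$, $\cos$, $\sin$ in place of $\cosh$. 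The step I expect to be the main obstacle is the expansion about $x=1$: because this is a singular point of the differential equation, one must justify that the single normalization $y(1)=1$ genuinely pins down the analytic branch (the indicial equation is $r(2r-1)=0$, and the half-integer root $r=\frac12$ is excluded by analyticity), and one must track the double-factorial and empty-product conventions carefully so that the $k=0$ terms come out correctly.
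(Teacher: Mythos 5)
Your proposal is correct and rests on the same engine as the paper's proof: the second-order ODE $(1-x^2)y''-xy'\mp\alpha^2y=0$, converted into a two-term coefficient recurrence at the ordinary point $x=0$ and a one-term recurrence at $x=1$, then telescoped. The differences are cosmetic or mildly advantageous. Where the paper derives the $n$-th-derivative relation \eqref{n-deriv-eq-gen} by induction and then evaluates at $x=0$ or $x=1^-$, you match power-series coefficients directly, which is equivalent; and where the paper re-runs the recursion with the initial data $f_\alpha(0)=\cosh\frac{\alpha\pi}{2}$, $f_\alpha'(0)=-\alpha\sinh\frac{\alpha\pi}{2}$, etc.\ to get \eqref{cosh-arccos-ser=0}, \eqref{sinh-arccos-ser}, \eqref{cos-arccos-ser-x=0}, and \eqref{sin-arccos-ser}, you obtain them at once from $\arccos x=\frac{\pi}{2}-\arcsin x$ and the addition theorems, which is cleaner. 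Two further points in your favor: first, your explicit treatment of $x=1$ as a regular singular point (indicial roots $0$ and $\frac12$, with the $r=\frac12$ branch excluded because $\cosh(\alpha\arccos x)$ and $\cos(\alpha\arccos x)$ are even power series in $\arccos x$ and hence analytic in $(\arccos x)^2\sim 2(1-x)$) fills a gap the paper leaves implicit when it simply sets $x\to1^-$ in \eqref{n-deriv-eq-gen}. Second, your telescoped coefficient for $\cos(\alpha\arccos x)$ carries the product up to $\ell=k$, i.e.\ $\prod_{\ell=1}^{k}[(\ell-1)^2-\alpha^2]$, which agrees with the paper's own computation $f_\alpha^{(k)}(1)=\prod_{\ell=0}^{k-1}\frac{\alpha^2-\ell^2}{2\ell+1}$ in its proof but not with the upper limit $k-1$ printed in \eqref{cos-arccos-ser}; the stated formula there is a typo (it fails already at $k=1$), and your version is the correct one.
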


\begin{proof}
Let $f_\alpha(x)=\cosh(\alpha\arcsin x)$. Then consecutive differentiations and simplifications give
\begin{gather*}
f_\alpha'(x)=\frac{\alpha}{\sqrt{1-x^2}\,} \sinh(\alpha\arcsin x)
=\frac{\alpha}{\sqrt{1-x^2}\,} \sqrt{f_\alpha^2(x)-1}\,,\\
\bigl(1-x^2\bigr)\bigl[f_\alpha'(x)\bigr]^2-\alpha^2f_\alpha^2(x)+\alpha^2=0,\\
\bigl(1-x^2\bigr)f_\alpha''(x)-xf_\alpha'(x)-\alpha^2f_\alpha(x)=0,\\
\bigl(1-x^2\bigr)f_\alpha^{(3)}(x)-3xf_\alpha''(x)-\bigl(1+\alpha^2\bigr)f_\alpha'(x)=0.
\end{gather*}
Accordingly, the differential equation
\begin{equation}\label{n-deriv-eq-gen}
\bigl(1-x^2\bigr)f_\alpha^{(k+2)}(x)-(2k+1)xf_\alpha^{(k+1)}(x)-\bigl(k^2+\alpha^2\bigr)f_\alpha^{(k)}(x)=0
\end{equation}
is valid for $k=0,1$ respectively. Differentiating on both sides of~\eqref{n-deriv-eq-gen} results in
\begin{equation*}
\bigl(1-x^2\bigr)f_\alpha^{(k+3)}(x)-(2k+3)xf_\alpha^{(k+2)}(x) -\bigl[(k+1)^2+\alpha^2\bigr]f_\alpha^{(k+1)}(x)=0.
\end{equation*}
By induction, the equation~\eqref{n-deriv-eq-gen} is valid for all $k\ge0$.
Taking $x\to0$ in~\eqref{n-deriv-eq-gen} gives
\begin{equation}\label{n-deriv-x=0}
f_\alpha^{(k+2)}(0)-\bigl(k^2+\alpha^2\bigr)f_\alpha^{(k)}(0)=0, \quad k\ge0.
\end{equation}
It is clear that $f_\alpha(0)=1$ and $f_\alpha'(0)=0$. Substituting these two initial values into the recursive relation~\eqref{n-deriv-x=0} and consecutively recursing reveal $f_\alpha^{(2k-1)}(0)=0$ and
\begin{equation*}
f_\alpha^{(2k)}(0)=\prod_{\ell=1}^{k}\bigl[4(\ell-1)^2+\alpha^2\bigr], \quad k\in\mathbb{N}.
\end{equation*}
Consequently, the series expansion~\eqref{cosh-arcsin-ser} follows.
\par
Let $f_\alpha(x)=\sinh(\alpha\arcsin x)$. Then consecutive differentiations and simplifications give
\begin{gather*}
f_\alpha'(x)=\frac{\alpha}{\sqrt{1-x^2}\,} \cosh(\alpha\arcsin x)
=\frac{\alpha}{\sqrt{1-x^2}\,} \sqrt{f_\alpha^2(x)+1}\,,\\
\bigl(1-x^2\bigr)\bigl[f_\alpha'(x)\bigr]^2-\alpha^2f_\alpha^2(x)-\alpha^2=0,\\
\bigl(1-x^2\bigr)f_\alpha''(x)-xf_\alpha'(x)-\alpha^2f_\alpha(x)=0,\\
\bigl(1-x^2\bigr)f_\alpha^{(3)}(x)-3xf_\alpha''(x)-\bigl(1+\alpha^2\bigr)f_\alpha'(x)=0.
\end{gather*}
By the same argument as above, the derivative $f_\alpha^{(k)}(0)$ for $n\ge0$ satisfy the recursive relation~\eqref{n-deriv-x=0}. Furthermore, from the facts that $f_\alpha(0)=0$ and $f_\alpha'(0)=\alpha$, we conclude $f_\alpha^{(2k)}(0)=0$ and
\begin{equation*}
f_\alpha^{(2k+1)}(0)=\alpha\prod_{\ell=1}^{k}\bigl[(2\ell-1)^2+\alpha^2\bigr], \quad k\ge0.
\end{equation*}
Consequently, the series expansion~\eqref{sinh-arcsin-ser} follows.
\par
Let $f_\alpha(x)=\cosh(\alpha\arccos x)$. Then successively differentiating yields
\begin{gather*}
f_\alpha'(x)=-\frac{\alpha}{\sqrt{1-x^2}}\sinh(\alpha\arccos x)
=-\frac{\alpha}{\sqrt{1-x^2}}\sqrt{f_\alpha^2(x)-1}\,,\\
\bigl(1-x^2\bigr)[f_\alpha'(x)]^2-\alpha^2\bigl[f_\alpha^2(x)-1\bigr]=0,\\
\bigl(1-x^2\bigr)f_\alpha''(x)-xf_\alpha'(x)-\alpha^2f_\alpha(x)=0,\\
\bigl(1-x^2\bigr)f_\alpha'''(x)-3xf_\alpha''(x)-\bigl(1+\alpha^2\bigr)f_\alpha'(x)=0.
\end{gather*}
As argued above, we conclude that the derivatives $f_\alpha^{(k)}(x)$ for $k\ge0$ satisfy the equation~\eqref{n-deriv-eq-gen}. Letting $x\to1^-$ in~\eqref{n-deriv-eq-gen} gives
\begin{equation}\label{n-deriv-x=1}
(2k+1)f_\alpha^{(k+1)}(1)+\bigl(k^2+\alpha^2\bigr)f_\alpha^{(k)}(1)=0.
\end{equation}
Setting $x\to0$ in~\eqref{n-deriv-eq-gen} leads to~\eqref{n-deriv-x=0}
It is easy to see that
\begin{equation*}
f_\alpha(1)=1,\quad f_\alpha'(1)=-\alpha^2, \quad f_\alpha(0)=\cosh\frac{\alpha\pi}{2},\quad f_\alpha'(0)=-\alpha\sinh\frac{\alpha\pi}{2}.
\end{equation*}
Substituting these four initial values into~\eqref{n-deriv-x=1} and inductively recursing reveal
\begin{align*}
f_\alpha^{(k)}(1)&=(-1)^k\frac{\prod_{\ell=1}^{k}\bigl[(\ell-1)^2+\alpha^2\bigr]}{(2k-1)!!},\\
f_\alpha^{(2k)}(0)&=\biggl(\cosh\frac{\alpha\pi}{2}\biggr) \prod_{\ell=1}^{k}\bigl[4(\ell-1)^2+\alpha^2\bigr],
\end{align*}
and
\begin{equation*}
f_\alpha^{(2k+1)}(0)=-\alpha\biggl(\sinh\frac{\alpha\pi}{2}\biggr) \prod_{\ell=1}^{k}\bigl[(2\ell-1)^2+\alpha^2\bigr]
\end{equation*}
for $k\ge0$.
Consequently, the series expansions~\eqref{cosh-arccos-ser} and~\eqref{cosh-arccos-ser=0} follow.
\par
Let $f_\alpha(x)=\sinh(\alpha\arccos x)$. Then
\begin{gather*}
f_\alpha'(x)=-\frac{\alpha}{\sqrt{1-x^2}\,}\cosh(\alpha\arccos x)
=-\frac{\alpha}{\sqrt{1-x^2}\,}\sqrt{f_\alpha^2(x)+1}\,,\\
\bigl(1-x^2\bigr)\bigl[f_\alpha'(x)\bigr]^2-\alpha^2\bigl[f_\alpha^2(x)+1\bigr]=0,\\
\bigl(1-x^2\bigr)f_\alpha''(x)-xf_\alpha'(x)-\alpha^2f_\alpha(x)=0,
\end{gather*}
and, inductively, the derivatives $f_\alpha^{(k)}(x)$ for $k\ge0$ satisfy the equations~\eqref{n-deriv-eq-gen} and~\eqref{n-deriv-x=0}. Since $f_\alpha(0)=\sinh\frac{\alpha\pi}{2}$ and $f_\alpha'(0)=-\alpha\cosh\frac{\alpha\pi}{2}$, we obtain
\begin{equation*}
f_\alpha^{(2k)}(0)=\Biggl(\prod_{\ell=1}^{k}\bigl[4(\ell-1)^2+\alpha^2\bigr]\Biggr)\sinh\frac{\alpha\pi}{2}
\end{equation*}
and
\begin{equation*}
f_\alpha^{(2k+1)}(0)=-\alpha\Biggl(\prod_{\ell=1}^{k}\bigl[(2\ell-1)^2+\alpha^2\bigr]\Biggr)\cosh\frac{\alpha\pi}{2}
\end{equation*}
for $k\ge0$. Consequently, the series expansion~\eqref{sinh-arccos-ser} follows.
\par
Let $f_\alpha(x)=\cos(\alpha\arcsin x)$. Then
\begin{gather*}
f_\alpha'(x)=-\frac{\alpha}{\sqrt{1-x^2}\,}\sin(\alpha\arcsin x)=-\frac{\alpha}{\sqrt{1-x^2}\,}\sqrt{1-f_\alpha^2(x)}\,\\
\bigl(1-x^2\bigr)\bigl[f_\alpha'(x)\bigr]^2-\alpha^2\bigl[1-f_\alpha^2(x)\bigr]=0,\\
\bigl(1-x^2\bigr)f_\alpha''(x)-xf_\alpha'(x)+\alpha^2f_\alpha(x)=0,\\
\bigl(1-x^2\bigr)f_\alpha^{(3)}(x)-3xf_\alpha''(x)+\bigl(\alpha^2-1\bigr)f_\alpha'(x)=0,\\
\bigl(1-x^2\bigr)f_\alpha^{(4)}(x)-5xf_\alpha^{(3)}(x)+\bigl(\alpha^2-4\bigr)f_\alpha''(x)=0,
\end{gather*}
and, inductively,
\begin{equation}\label{n-deriv-gen+alpha}
\bigl(1-x^2\bigr)f_\alpha^{(k+2)}(x)-(2k+1)xf_\alpha^{(k+1)}(x)+\bigl(\alpha^2-k^2\bigr)f_\alpha^{(k)}(x)=0, \quad k\ge0.
\end{equation}
Letting $x\to0$ in~\eqref{n-deriv-gen+alpha} results in
\begin{equation}\label{n-deriv-x=0+alpha}
f_\alpha^{(k+2)}(0)+\bigl(\alpha^2-k^2\bigr)f_\alpha^{(k)}(0)=0, \quad k\ge0.
\end{equation}
Recusing the relation~\eqref{n-deriv-x=0+alpha} and considering $f_\alpha(0)=1$ and $f_\alpha'(0)=0$ arrive at
\begin{equation*}
f^{(2k)}(0)=\prod_{\ell=1}^{k}\bigl[4(\ell-1)^2-\alpha^2\bigr]
\quad\text{and}\quad
f^{(2k+1)}(0)=0
\end{equation*}
for $k\ge0$. Consequently, the series expansion~\eqref{cos-arcsin-ser} is valid.
\par
The series expansion~\eqref{sin-arcsin-ser} can be derived similarly.
\par
Let $f_\alpha(x)=\cos(\alpha\arccos x)$. Then the derivatives $f_\alpha^{(k)}(x)$ and $f_\alpha^{(k)}(0)$ satisfy
\begin{gather*}
f_\alpha'(x)=\frac{\alpha}{\sqrt{1-x^2}\,}\sin(\alpha\arccos x)
=\frac{\alpha}{\sqrt{1-x^2}\,}\sqrt{1-f_\alpha^2(x)}\,,\\
\bigl(1-x^2\bigr)\bigl[f_\alpha'(x)\bigr]^2-\alpha^2\bigl[1-f_\alpha^2(x)\bigr]=0,
\end{gather*}
and, inductively, the differential equation~\eqref{n-deriv-gen+alpha}. Setting $x\to1^-$ in~\eqref{n-deriv-gen+alpha} acquires
\begin{equation}\label{n-deriv-x=1+alpha}
(2k+1)f_\alpha^{(k+1)}(1)=\bigl(\alpha^2-k^2\bigr)f_\alpha^{(k)}(1), \quad k\ge0.
\end{equation}
Letting $x\to0$ in~\eqref{n-deriv-gen+alpha} leads to~\eqref{n-deriv-x=0+alpha}. Since $f_\alpha(1)=1$, $f_\alpha(0)=\cos\frac{\alpha\pi}{2}$, and $f_\alpha'(0)=\alpha\sin\frac{\alpha\pi}{2}$, from~\eqref{n-deriv-x=1+alpha} and~\eqref{n-deriv-x=0+alpha}, we obtain
\begin{equation*}
f_\alpha^{(k)}(1)=\prod_{\ell=0}^{k-1}\frac{\alpha^2-\ell^2}{2\ell+1}, \quad f_\alpha^{(2k)}(0)=\biggl(\cos\frac{\alpha\pi}{2}\biggr)\prod_{\ell=1}^{k}\bigl[4(\ell-1)^2-\alpha^2\bigr],
\end{equation*}
and
\begin{equation*}
f_\alpha^{(2k+1)}(0)=\alpha\biggl(\sin\frac{\alpha\pi}{2}\biggr)\prod_{\ell=1}^{k}\bigl[(2\ell-1)^2-\alpha^2\bigr]
\end{equation*}
for $k\ge0$. As a result, we acquire the series expansions~\eqref{cos-arccos-ser} and~\eqref{cos-arccos-ser-x=0}.
\par
Let $f_\alpha(x)=\sin(\alpha\arccos x)$. Then
\begin{gather*}
f_\alpha'(x)=-\frac{\alpha}{\sqrt{1-x^2}}\cos(\alpha\arccos x)
=-\frac{\alpha}{\sqrt{1-x^2}}\sqrt{1-f_\alpha^2(x)}\,,\\
\bigl(1-x^2\bigr)\bigl[f_\alpha'(x)\bigr]^2-\alpha^2\bigl[1-f_\alpha^2(x)\bigr]=0,\\
\bigl(1-x^2\bigr)f_\alpha''(x)-xf_\alpha'(x)+\alpha^2f_\alpha(x)=0,\\
\bigl(1-x^2\bigr)f_\alpha^{(3)}(x)-3xf_\alpha''(x)+\bigl(\alpha^2-1\bigr)f_\alpha'(x)=0,
\end{gather*}
and, inductively, the differential equation~\eqref{n-deriv-gen+alpha} and the recursive relation~\eqref{n-deriv-x=0+alpha} are valid. Employing the recursive relation~\eqref{n-deriv-x=0+alpha} and using
\begin{equation*}
f_\alpha(0)=\sin\frac{\alpha\pi}{2} \quad\text{and}\quad f_\alpha'(0)=-\alpha\cos\frac{\alpha\pi}{2}
\end{equation*}
results in
\begin{equation*}
f_\alpha^{(2k)}(0)=\biggl(\sin\frac{\alpha\pi}{2}\biggr) \prod_{\ell=1}^{k}\bigl[4(\ell-1)^2-\alpha^2\bigr]
\end{equation*}
and
\begin{equation*}
f_\alpha^{(2k+1)}(0)=-\alpha\biggl(\cos\frac{\alpha\pi}{2}\biggr) \prod_{\ell=1}^{k}\bigl[(2\ell-1)^2-\alpha^2\bigr]
\end{equation*}
for $k\ge0$. Accordingly, the series expansion~\eqref{sin-arccos-ser} follows.
The proof of Lemma~\ref{cosh-sinh-arcsin-arccos-thm} is complete.
\end{proof}

\begin{rem}
In the paper~\cite{Qureshi-Majid-Bhat-Aeajmms-2020}, among other things, three authors established series expansions at $x=0$ or $x=1$ of the functions
\begin{gather*}
\exp(\alpha\arccos x),\quad
\frac{\exp(\alpha\arccos x)}{\sqrt{1-x^2}\,},\quad
\frac{\arccos x}{\sqrt{1-x^2}\,},\\
\frac{\sin(\alpha\arccos x)}{\sqrt{1-x^2}\,},\quad
\exp(\alpha\arccosh x),\quad
\frac{\sin(\alpha\arccosh x)}{\sqrt{x^2-1}\,}, \\
\frac{\sinh(\alpha\arccosh x)}{\sqrt{x^2-1}\,},\quad
\cos(\alpha\arccos x),\quad
\cosh(\alpha\arccos x), \\
\cos(\alpha\arccosh x),\quad
\cosh(\alpha\arccosh x),\quad
\sin(\alpha\arccos x), \\
\sinh(\alpha\arccos x),\quad
\frac{\cos(\alpha\arccos x)}{\sqrt{1-x^2}\,},\quad
\frac{\cosh(\alpha\arccos x)}{\sqrt{1-x^2}\,}, \\
\frac{\cos(\alpha\arccosh x)}{\sqrt{1-x^2}\,},\quad
\frac{\sinh(\alpha\arccos x)}{\sqrt{1-x^2}\,},\quad
\frac{\cosh(\alpha\arccosh x)}{\sqrt{1-x^2}\,}
\end{gather*}
for $\alpha\in\mathbb{C}\setminus\{0\}$ in terms of the Gauss hypergeometric function ${}_2F_1(\alpha,\beta;\gamma;z)$ which can be defined~\cite[Section~5.9]{Temme-96-book} by
\begin{equation*}
{}_2F_1(\alpha,\beta;\gamma;z)=\sum_{k=0}^{\infty}\frac{(\alpha)_k(\beta)_k}{(\gamma)_k}\frac{z^k}{k!}, \quad |z|<1
\end{equation*}
for complex numbers $\alpha,\beta\in\mathbb{C}\setminus\{0\}$ and $\gamma\in\mathbb{C}\setminus\{0,-1,-2,\dotsc\}$, where $(\alpha)_k$, $(\beta)_k$, and $(\gamma)_k$ are defined by~\eqref{rising-Factorial} or~\eqref{extended-Pochhammer-dfn}.
\par
By the way, we point out that the series expansions~(2.1), (4.1), (4.3), (4.4), (4.7), and~(4.9) in the paper~\cite{Qureshi-Majid-Bhat-Aeajmms-2020} should be wrong.
\end{rem}

\begin{lem}\label{Partial-Bell-Conclusions-Lem}
Let $n\ge k\ge0$ and $\alpha,\beta\in\mathbb{C}$. Then
\begin{enumerate}
\item
The Fa\`a di Bruno formula can be described in terms of partial Bell polynomials $\bell_{n,k}$ by
\begin{equation}\label{Bruno-Bell-Polynomial}
\frac{\td^n}{\td t^n}f\circ h(t)
=\sum_{k=0}^nf^{(k)}(h(t)) \bell_{n,k}\bigl(h'(t),h''(t),\dotsc,h^{(n-k+1)}(t)\bigr), \quad n\in\mathbb{N}_0.
\end{equation}
\item
Partial Bell polynomials $\bell_{n,k}$ satisfy the identities
\begin{align}\label{Bell(n-k)}
\bell_{n,k}\bigl(\alpha\beta x_1,\alpha \beta^2x_2,\dotsc,\alpha \beta^{n-k+1}x_{n-k+1}\bigr)
&=\alpha^k\beta^n\bell_{n,k}(x_1,x_2,\dotsc,x_{n-k+1}),\\
\label{Bell-x-1-0-eq}
\bell_{n,k}(\alpha,1,0,\dotsc,0)
&=\frac{(n-k)!}{2^{n-k}}\binom{n}{k}\binom{k}{n-k}\alpha^{2k-n},\\
\label{Bell-formula-S}
\bell_{n,k}((-1)!!,1!!,3!!,\dotsc,[2(n-k)-1]!!)
&=[2(n-k)-1]!!\binom{2n-k-1}{2(n-k)},
\end{align}
and
\begin{equation}\label{113-final-formula}
\frac1{k!}\Biggl(\sum_{m=1}^{\infty} x_m\frac{t^m}{m!}\Biggr)^k =\sum_{n=k}^{\infty} \bell_{n,k}(x_1,x_2,\dotsc,x_{n-k+1})\frac{t^n}{n!}.
\end{equation}
\end{enumerate}
\end{lem}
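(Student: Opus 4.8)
The plan is to sort the five formulas by provenance. The two structural pillars, the Fa\`a di Bruno formula \eqref{Bruno-Bell-Polynomial} and the bivariate generating relation \eqref{113-final-formula}, are classical and I would simply cite them: \eqref{Bruno-Bell-Polynomial} is the Bell-polynomial form of the chain rule from \cite[p.~134, Theorem~A]{Comtet-Combinatorics-74} (see also \cite{Charalambides-book-2002}), while \eqref{113-final-formula} is the defining generating identity for the $\bell_{n,k}$ recorded in the same sources. The three remaining formulas are evaluations of $\bell_{n,k}$ at prescribed argument sequences, and each can be extracted either directly from the defining sum or from \eqref{113-final-formula}.

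For the homogeneity identity \eqref{Bell(n-k)}, I would argue straight from the definition. Replacing each $x_i$ by $\alpha\beta^i x_i$ multiplies the generic summand $\frac{n!}{\prod_i\ell_i!}\prod_i(x_i/i!)^{\ell_i}$ by $\prod_i(\alpha\beta^i)^{\ell_i}=\alpha^{\sum_i\ell_i}\beta^{\sum_i i\ell_i}$. Since every index vector contributing to $\bell_{n,k}$ satisfies the two constraints $\sum_i\ell_i=k$ and $\sum_i i\ell_i=n$, this prefactor equals the same constant $\alpha^k\beta^n$ on each summand, hence pulls out of the entire sum and yields \eqref{Bell(n-k)}.

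For \eqref{Bell-x-1-0-eq}, the vanishing of $x_3,x_4,\dotsc$ forces $\ell_i=0$ for $i\ge3$, so the constraints collapse to the linear system $\ell_1+\ell_2=k$ and $\ell_1+2\ell_2=n$, whose unique solution is $\ell_1=2k-n$ and $\ell_2=n-k$. Thus a single summand survives, namely $\frac{n!}{(2k-n)!(n-k)!}\alpha^{2k-n}\bigl(\frac1{2!}\bigr)^{n-k}$, and a short rearrangement of factorials recasts it as $\frac{(n-k)!}{2^{n-k}}\binom{n}{k}\binom{k}{n-k}\alpha^{2k-n}$. When $n<k$ or $n>2k$ the system has no nonnegative solution, the sum is empty, and both sides vanish, so the implicit range $k\le n\le2k$ is automatic.

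The one genuinely computational item is \eqref{Bell-formula-S}, for which I would feed the sequence $x_m=(2m-3)!!$ into \eqref{113-final-formula}. Because $\frac{\td^m}{\td t^m}\sqrt{1-2t}\,\big|_{t=0}=-(2m-3)!!$ for $m\in\mathbb{N}$, the inner series sums to an elementary function, $\sum_{m=1}^\infty(2m-3)!!\frac{t^m}{m!}=1-\sqrt{1-2t}\,$, whence $\frac1{k!}\bigl(1-\sqrt{1-2t}\,\bigr)^k=\sum_{n=k}^\infty\bell_{n,k}\bigl((-1)!!,1!!,\dotsc\bigr)\frac{t^n}{n!}$. Expanding by the binomial theorem, $\bigl(1-\sqrt{1-2t}\,\bigr)^k=\sum_{j=0}^k\binom{k}{j}(-1)^j(1-2t)^{j/2}$, and reading off the coefficient of $t^n$ through the generalized binomial series writes $\bell_{n,k}$ as an alternating double-binomial sum. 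The hard part will be collapsing that sum to the compact closed form $[2(n-k)-1]!!\binom{2n-k-1}{2(n-k)}$; rather than grind through the hypergeometric simplification I would cite it from the literature on powers of $1-\sqrt{1-2t}\,$, cross-checking the endpoints $n=k$ (giving $1$) and $n=k+1$ (giving $\binom{k+1}{2}$) against the elementary values $\bell_{k,k}=x_1^k$ and $\bell_{k+1,k}=\binom{k+1}{2}x_1^{k-1}x_2$.
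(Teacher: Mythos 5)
Your proposal is correct, but it does genuinely more than the paper, which offers no proof of this lemma at all: immediately after the statement the author simply writes that the formulas ``can be found in'' \cite[p.~412]{Charalambides-book-2002}, \cite[pp.~134--135 and p.~139]{Comtet-Combinatorics-74}, \cite[Theorem~4.1]{Spec-Bell2Euler-S.tex}, \cite[p.~169, (3.6)]{CDM-68111.tex}, and \cite[Theorem~1.2]{JAAC961.tex}. You cite the two structural facts \eqref{Bruno-Bell-Polynomial} and \eqref{113-final-formula} exactly as the paper does, but you then derive the three evaluation identities rather than outsource them. Your arguments check out: homogeneity \eqref{Bell(n-k)} is immediate from the constraints $\sum_i\ell_i=k$ and $\sum_i i\ell_i=n$ in the defining sum; for \eqref{Bell-x-1-0-eq} the unique surviving index vector $(\ell_1,\ell_2)=(2k-n,n-k)$ gives $\frac{n!}{(2k-n)!(n-k)!}\frac{\alpha^{2k-n}}{2^{n-k}}$, which indeed equals $\frac{(n-k)!}{2^{n-k}}\binom{n}{k}\binom{k}{n-k}\alpha^{2k-n}$, and your remark that the binomial $\binom{k}{n-k}$ silently enforces $n\le 2k$ is a point the paper never makes; and for \eqref{Bell-formula-S} the generating-function identity $\sum_{m\ge1}(2m-3)!!\,t^m/m!=1-\sqrt{1-2t}$ is correct, with your endpoint checks at $n=k$ and $n=k+1$ matching $\bell_{k,k}=x_1^k$ and $\bell_{k+1,k}=\binom{k+1}{2}x_1^{k-1}x_2$. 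The one soft spot is that you defer the collapse of the alternating double-binomial sum for \eqref{Bell-formula-S} to the literature, so that item ultimately still rests on a citation (it is the content of \cite[p.~169, (3.6)]{CDM-68111.tex}); if you want the lemma fully self-contained you would need to carry out that hypergeometric reduction, e.g.\ by verifying that $[2(n-k)-1]!!\binom{2n-k-1}{2(n-k)}=\frac{(2n-k-1)!}{2^{n-k}(n-k)!(k-1)!}$ satisfies the recurrence coming from differentiating $(1-\sqrt{1-2t}\,)^k$. What your route buys is a mostly self-contained verification and an explicit view of where each hypothesis enters; what the paper's route buys is brevity, these being standard catalogued facts about partial Bell polynomials.
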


These formulas in Lemma~\ref{Partial-Bell-Conclusions-Lem} can be found in~\cite[p.~412]{Charalambides-book-2002}, \cite[pp.~134--135 and p.~139]{Comtet-Combinatorics-74}, \cite[Theorem~4.1]{Spec-Bell2Euler-S.tex}, \cite[p.~169, (3.6)]{CDM-68111.tex}, and~\cite[Theorem~1.2]{JAAC961.tex}, respectively. These identities in Lemma~\ref{Partial-Bell-Conclusions-Lem} can also be found in the survey and review article~\cite{Bell-value-elem-funct.tex}.

\section{Taylor's series expansions of $\bigl[\frac{(\arccos x)^{2}}{2(1-x)}\bigr]^k$ and $\bigl[\frac{(\arccosh x)^{2}}{2(1-x)}\bigr]^k$}

In this section, by virtue of some conclusions in Lemmas~\ref{square-sum-prod-lem}, \ref{square-sum-prod-odd-lem}, and~\ref{cosh-sinh-arcsin-arccos-thm}, we establish Taylor's series expansions around $x=1^-$ of the functions $\bigl[\frac{(\arccos x)^{2}}{2(1-x)}\bigr]^k$ and $\bigl[\frac{(\arccosh x)^{2}}{2(1-x)}\bigr]^k$ in terms of $Q(k,m)$ defined by the formula~\eqref{Q(m-k)-sum-dfn}.

\begin{thm}\label{arccos-ser-expan-thm}
For $k\in\mathbb{N}$ and $|x|<1$, we have
\begin{equation}\label{arccos-2(1-x)-power-ser-expan}
\biggl[\frac{(\arccos x)^{2}}{2(1-x)}\biggr]^k
=1+(2k)!\sum_{n=1}^{\infty} \frac{Q(2k,2n)}{(2k+2n)!}[2(x-1)]^{n}
\end{equation}
and
\begin{equation}\label{arccosh-square-ser}
\biggl[\frac{(\arccosh x)^{2}}{2(x-1)}\biggr]^k
=1+(2k)!\sum_{n=1}^{\infty} \frac{Q(2k,2n)}{(2k+2n)!}[2(x-1)]^{n},
\end{equation}
where $Q(2k,2n)$ is defined by~\eqref{Q(m-k)-sum-dfn}.
\end{thm}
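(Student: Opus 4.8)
The plan is to extract the coefficient of $\alpha^{2k}$ from two different representations of $\cosh(\alpha\arccos x)$, viewed as a double power series in $\alpha$ and $x-1$, and then to recognize the resulting coefficients as $Q(2k,2n)$ through Lemma~\ref{square-sum-prod-lem}. On one hand, $\cosh(\alpha\arccos x)=\sum_{m=0}^{\infty}\frac{\alpha^{2m}(\arccos x)^{2m}}{(2m)!}$, so the coefficient of $\alpha^{2k}$ is exactly $\frac{(\arccos x)^{2k}}{(2k)!}$. On the other hand, the expansion~\eqref{cosh-arccos-ser} writes the same function near $x=1$ as $\sum_{j=0}^{\infty}\frac{(-1)^j}{(2j-1)!!}\bigl(\prod_{\ell=1}^{j}[(\ell-1)^2+\alpha^2]\bigr)\frac{(x-1)^j}{j!}$. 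Since $\prod_{\ell=1}^{j}[(\ell-1)^2+\alpha^2]=\alpha^2\prod_{\ell=1}^{j-1}(\ell^2+\alpha^2)$ is a polynomial in $\alpha^2$ with lowest term $\alpha^2$, its coefficient of $\alpha^{2k}$, which I denote $c_{j,k}$, vanishes for $j<k$ and equals $1$ for $j=k$. Equating the two representations and reading off the coefficient of $\alpha^{2k}$ --- permissible because the double series is absolutely convergent for $|x-1|<2$, a region containing $|x|<1$ --- gives $\frac{(\arccos x)^{2k}}{(2k)!}=\sum_{j=k}^{\infty}\frac{(-1)^j}{(2j-1)!!}c_{j,k}\frac{(x-1)^j}{j!}$.

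Next I would divide through by $[2(1-x)]^k=(-2)^k(x-1)^k$ and set $n=j-k$, obtaining $\bigl[\frac{(\arccos x)^2}{2(1-x)}\bigr]^k=\frac{(2k)!}{2^k}\sum_{n=0}^{\infty}\frac{(-1)^n}{(2n+2k-1)!!}c_{n+k,k}\frac{(x-1)^n}{(n+k)!}$. The crux is the identity $(-1)^n c_{n+k,k}=Q(2k,2n)$. To prove it, I would apply~\eqref{square-sum-first-stirl-eq} of Lemma~\ref{square-sum-prod-lem} with $k'=n+k-1$ to $\prod_{\ell=1}^{n+k-1}(\ell^2+\alpha^2)$ and pick out the coefficient of $\alpha^{2(k-1)}$, which yields $c_{n+k,k}=(-1)^n\sum_{\ell=2k-1}^{2n+2k-1}\binom{\ell}{2k-1}s(2n+2k-1,\ell)(n+k-1)^{\ell-2k+1}$; the substitution $\ell=2k-1+p$ then matches this sum term-by-term against the definition~\eqref{Q(m-k)-sum-dfn} of $Q(2k,2n)$. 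Finally, rewriting the double factorials through $(2k+2n)!=2^{k+n}(k+n)!(2n+2k-1)!!$ recasts the series as $1+(2k)!\sum_{n=1}^{\infty}\frac{Q(2k,2n)}{(2k+2n)!}[2(x-1)]^n$, where the $n=0$ term reduces to $1$ because $c_{k,k}=1$ and $(2k)!=2^k k!(2k-1)!!$; this is precisely~\eqref{arccos-2(1-x)-power-ser-expan}.

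The companion expansion~\eqref{arccosh-square-ser} then requires no new computation: from $\cos(\ti y)=\cosh y$ one gets $\arccos x=\ti\arccosh x$, hence $(\arccos x)^2=-(\arccosh x)^2$ and $\frac{(\arccos x)^2}{2(1-x)}=\frac{(\arccosh x)^2}{2(x-1)}$, so the two functions share the same Taylor series at $x=1$. I expect the main obstacle to be the bookkeeping inside the key identity $(-1)^n c_{n+k,k}=Q(2k,2n)$: one must correctly track the global sign $(-1)^{k'}$ and the shift $k'=n+k-1$ coming from~\eqref{square-sum-first-stirl-eq} and check that the reindexing $\ell=2k-1+p$ aligns the binomial, the Stirling factor, and the power of $n+k-1$ exactly with~\eqref{Q(m-k)-sum-dfn}. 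The accompanying double-factorial manipulations, though elementary, also demand care so that the series lands precisely on the factor $[2(x-1)]^n/(2k+2n)!$.
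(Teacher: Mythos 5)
Your proposal is correct and follows essentially the same route as the paper: extract the coefficient of $\alpha^{2k}$ from the Taylor expansion of $\cosh(\alpha\arccos x)$ at $x=1$ (the paper works primarily from the $\cos(\alpha\arccos x)$ expansion~\eqref{cos-arccos-ser} but explicitly notes that~\eqref{cosh-arccos-ser} gives the same result), identify the resulting Stirling-number sums with $Q(2k,2n)$ via~\eqref{square-sum-first-stirl-eq}, and pass to $\arccosh$ through $\arccos x=-\ti\arccosh x$. Your uniform treatment of all $k\ge1$ through the coefficient $c_{j,k}$ is marginally cleaner than the paper's separate handling of the $k=1$ case, but the substance is identical.
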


\begin{proof}
Replacing $\alpha$ by $\ti\alpha$ in~\eqref{square-sum-first-stirl-eq} leads to
\begin{equation}\label{squ-sirst-stirl-eq}
\prod_{\ell=1}^{k}\bigl(\ell^2-\alpha^2\bigr)
=(-1)^{k}\sum_{j=0}^{k}\Biggl[\sum_{\ell=2j+1}^{2k+1}\binom{\ell}{2j+1}s(2k+1,\ell)k^{\ell-2j-1}\Biggr]\alpha^{2j}.
\end{equation}
From Taylor's series expansion~\eqref{cos-arccos-ser} and the identity~\eqref{squ-sirst-stirl-eq}, it follows that
\begin{gather*}
\sum_{k=0}^{\infty}(-1)^k\frac{(\alpha\arccos x)^{2k}}{(2k)!}
=1+(x-1)\alpha^2-\alpha^2\sum_{k=2}^{\infty} \frac{(-1)^k}{(2k-1)!!} \Biggl[\prod_{\ell=1}^{k-1}\bigl(\ell^2-\alpha^2\bigr)\Biggr] \frac{(x-1)^k}{k!}\\
=1+(x-1)\alpha^2+\alpha^2\sum_{k=2}^{\infty} \frac{1}{(2k-1)!!} \Biggl[\sum_{j=0}^{k-1} \alpha^{2j}\sum_{\ell=2j+1}^{2k-1}\binom{\ell}{2j+1}s(2k-1,\ell)(k-1)^{\ell-2j-1}\Biggr] \frac{(x-1)^{k}}{k!}\\
=1+(x-1)\alpha^2+\alpha^2\sum_{k=2}^{\infty} \frac{1}{(2k-1)!!} \Biggl[\sum_{\ell=1}^{2k-1}\ell s(2k-1,\ell)(k-1)^{\ell-1}\Biggr] \frac{(x-1)^{k}}{k!}\\
+\alpha^2\sum_{k=2}^{\infty} \frac{1}{(2k-1)!!}\frac{(x-1)^{k}}{k!} \sum_{j=2}^{k} \alpha^{2j-2} \sum_{\ell=2j-1}^{2k-1} \binom{\ell}{2j-1}s(2k-1,\ell)(k-1)^{\ell-2j+1}\\
=1+(x-1)\alpha^2+\alpha^2\sum_{k=2}^{\infty} \frac{1}{(2k-1)!!} \Biggl[\sum_{\ell=1}^{2k-1}\ell s(2k-1,\ell)(k-1)^{\ell-1}\Biggr] \frac{(x-1)^{k}}{k!}\\
+\sum_{k=2}^{\infty}\Biggl[\sum_{m=k}^{\infty} \frac{1}{(2m-1)!!}\frac{(x-1)^{m}}{m!} \sum_{\ell=2k-1}^{2m-1} \binom{\ell}{2k-1}s(2m-1,\ell)(m-1)^{\ell-2k+1}\Biggr]\alpha^{2k}.
\end{gather*}
This means that
\begin{align*}
-\frac{(\arccos x)^{2}}{2!}
&=x-1+\sum_{k=2}^{\infty} \frac{1}{(2k-1)!!} \Biggl[\sum_{\ell=1}^{2k-1}\ell s(2k-1,\ell)(k-1)^{\ell-1}\Biggr] \frac{(x-1)^{k}}{k!}\\
&=x-1+\sum_{k=1}^{\infty} \Biggl[\sum_{\ell=0}^{2k}(\ell+1) s(2k+1,\ell+1)k^{\ell}\Biggr] \frac{[2(x-1)]^{k+1}}{(2k+2)!},
\end{align*}
where we used the identity~\eqref{square-sum-first-alpha=0} in Lemma~\ref{square-sum-prod-lem} or the identity~\eqref{square-sum-first-alpha=0-Q},
and that
\begin{align*}
(-1)^k\frac{(\arccos x)^{2k}}{(2k)!}
&=\sum_{m=k}^{\infty} \Biggl[\sum_{\ell=2k-1}^{2m-1} \binom{\ell}{2k-1}s(2m-1,\ell)(m-1)^{\ell-2k+1}\Biggr]\frac{(x-1)^{m}}{(2m-1)!!m!}\\
&=\sum_{m=0}^{\infty} Q(2k,2m) \frac{[2(x-1)]^{m+k}}{(2k+2m)!}
\end{align*}
for $k\ge2$. Consequently, the series expansions
\begin{equation}\label{arccos-square-ser}
\frac{(\arccos x)^{2}}{2!}
=\sum_{m=0}^{\infty} \frac{m!}{(2m+1)!!} \frac{(1-x)^{m+1}}{m+1}, \quad |x|<1
\end{equation}
and
\begin{equation}\label{arccos-pow-ser}
\frac{(\arccos x)^{2k}}{(2k)!}
=\sum_{m=0}^{\infty} (-1)^{m}Q(2k,2m) \frac{[2(1-x)]^{m+k}}{(2k+2m)!}
\end{equation}
for $k\ge2$ and $|x|<1$ are valid.
\par
By similar arguments as done above, from the series expansion~\eqref{cosh-arccos-ser}, we can also recover the series expansion~\eqref{arccos-square-ser} and~\eqref{arccos-pow-ser}.
\par
The series expansions~\eqref{arccos-square-ser} and~\eqref{arccos-pow-ser} can be reformulated as
\begin{equation*}
\frac{(\arccos x)^{2}}{2(1-x)}
=1+\sum_{m=1}^{\infty} \frac{(m!)^2}{(2m+1)!!(m+1)} \frac{(1-x)^{m}}{m!}
\end{equation*}
and
\begin{gather*}
\biggl[\frac{(\arccos x)^{2}}{2(1-x)}\biggr]^k
=1+\sum_{m=1}^{\infty} \frac{(-1)^{m}}{(2m-1)!!\binom{2m+2k}{2k}} Q(2k,2m)\frac{(1-x)^{m}}{m!}
\end{gather*}
for $k\ge2$. Making use of the identity~\eqref{square-sum-first-alpha=0} or~\eqref{square-sum-first-alpha=0-Q}, we obtain
\begin{equation*}
\frac{(-1)^{m}}{(2m-1)!!\binom{2m+2}{2}} Q(2,2m)
=\frac{(m!)^2}{(2m+1)!!(m+1)}, \quad m\in\mathbb{N}.
\end{equation*}
Consequently, the series expansions~\eqref{arccos-square-ser} and~\eqref{arccos-pow-ser} can be unified as the series expansion~\eqref{arccos-2(1-x)-power-ser-expan}.
\par
By the relation
\begin{equation}\label{arccos-arccosh-relation}
\arccos x=-\ti\arccosh x,
\end{equation}
from~\eqref{arccos-2(1-x)-power-ser-expan}, we deduce~\eqref{arccosh-square-ser}.
The proof of Theorem~\ref{arccos-ser-expan-thm} is complete.
\end{proof}

\begin{cor}\label{Minus-sh-square-ser-COR}
For $k\in\mathbb{N}$ and $|x|<1$, we have
\begin{equation}\label{Minus-2(1-x)-power-ser-expan}
\biggl[\frac{(\pi-\arccos x)^{2}}{2(1+x)}\biggr]^k
=1+(2k)!\sum_{m=1}^{\infty} (-1)^m\frac{Q(2k,2m)} {(2k+2m)!}[2(x+1)]^{m}
\end{equation}
and
\begin{equation}\label{Minus-sh-square-ser}
(-1)^k\biggl[\frac{(\pi+\ti\arccosh x)^{2}}{2(1+x)}\biggr]^k
=1+\sum_{m=1}^{\infty} (-1)^m \frac{(2k)!(2m)!!}{(2k+2m)!}Q(2k,2m)\frac{(x+1)^{m}}{m!},
\end{equation}
where $Q(2k,2m)$ is defined by~\eqref{Q(m-k)-sum-dfn}.
\end{cor}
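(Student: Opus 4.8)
The plan is to obtain both identities directly from Theorem~\ref{arccos-ser-expan-thm} by the substitution $x\mapsto-x$, so that no fresh series manipulation is needed; the only extra inputs are the reflection formula $\arccos(-x)=\pi-\arccos x$, the relation~\eqref{arccos-arccosh-relation}, and the elementary identity $(2m)!!=2^mm!$.

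First I would establish~\eqref{Minus-2(1-x)-power-ser-expan}. Replacing $x$ by $-x$ in~\eqref{arccos-2(1-x)-power-ser-expan} is legitimate on the whole disc $|x|<1$, since $x\mapsto-x$ preserves it. This turns the left-hand side into $\bigl[\frac{(\arccos(-x))^{2}}{2(1+x)}\bigr]^{k}$ and each factor $[2(x-1)]^{n}$ into $[2(-x-1)]^{n}=(-1)^{n}[2(x+1)]^{n}$, which supplies the alternating sign $(-1)^{m}$ in the claimed series. Invoking $\arccos(-x)=\pi-\arccos x$ on the real interval $(-1,1)$, and extending by analyticity to $|x|<1$, rewrites the numerator as $(\pi-\arccos x)^{2}$, yielding~\eqref{Minus-2(1-x)-power-ser-expan} verbatim.

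For~\eqref{Minus-sh-square-ser} I would carry out the same substitution $x\mapsto-x$ in the hyperbolic companion~\eqref{arccosh-square-ser}. Here the denominator $2(x-1)$ becomes $2(-x-1)=-2(x+1)$, so that raising to the $k$-th power produces the prefactor $(-1)^{k}$ recorded on the left of~\eqref{Minus-sh-square-ser}, while $[2(x-1)]^{n}\mapsto(-1)^{n}[2(x+1)]^{n}$ again furnishes the sign $(-1)^{m}$. To recast the numerator I would use~\eqref{arccos-arccosh-relation} together with $\arccos(-x)=\pi-\arccos x$ to express $\arccosh(-x)$ through $\pi$ and $\arccosh x$, thereby identifying $(\arccosh(-x))^{2}$ with the square $(\pi+\ti\arccosh x)^{2}$ appearing on the left-hand side. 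Finally, writing $[2(x+1)]^{m}=2^{m}(x+1)^{m}=(2m)!!\,\frac{(x+1)^{m}}{m!}$ transfers the factor $(2m)!!$ into the summand and the $m!$ into the denominator, matching the stated shape of the series exactly.

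The routine parts—the index relabelling and the conversion $[2(x+1)]^{m}=(2m)!!\,(x+1)^{m}/m!$—are harmless. The one step demanding genuine care, and the main obstacle, is the branch bookkeeping for $\arccosh$ on $(-1,1)$: one must track the signs in~\eqref{arccos-arccosh-relation} and in the reflection relating $\arccosh(-x)$ to $\pi+\ti\arccosh x$ consistently, so that the squared numerator and the $(-1)^{k}$ coming from the denominator $-2(x+1)$ combine into precisely the left-hand side of~\eqref{Minus-sh-square-ser}. Once that branch relation is pinned down, the rest is a direct transcription of the expansions already secured in Theorem~\ref{arccos-ser-expan-thm}.
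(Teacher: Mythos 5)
Your route is the same as the paper's: the paper proves both identities in one line by replacing $x$ with $-x$ in \eqref{arccos-2(1-x)-power-ser-expan} and \eqref{arccosh-square-ser} and invoking $\arccos x+\arccos(-x)=\pi$ together with \eqref{arccos-arccosh-relation}. Your treatment of \eqref{Minus-2(1-x)-power-ser-expan} is complete and correct. The problem is the second identity: the one step you explicitly defer --- ``pinning down'' the branch relation between $\arccosh(-x)$ and $\pi+\ti\arccosh x$ --- is exactly where the sign lives, and when you actually carry it out your bookkeeping does not close. From \eqref{arccos-arccosh-relation} one has $\arccosh x=\ti\arccos x$, hence
\begin{equation*}
\arccosh(-x)=\ti\arccos(-x)=\ti(\pi-\arccos x)=\ti(\pi+\ti\arccosh x),
\qquad
(\arccosh(-x))^{2}=-(\pi+\ti\arccosh x)^{2},
\end{equation*}
not $(\arccosh(-x))^{2}=(\pi+\ti\arccosh x)^{2}$ as you assert. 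This extra $-1$ in the numerator cancels the $-1$ you correctly extract from the denominator $2(-x-1)=-2(x+1)$, so the substitution $x\mapsto-x$ in \eqref{arccosh-square-ser} yields
\begin{equation*}
\biggl[\frac{(\pi+\ti\arccosh x)^{2}}{2(1+x)}\biggr]^{k}
=1+\sum_{m=1}^{\infty}(-1)^{m}\frac{(2k)!(2m)!!}{(2k+2m)!}Q(2k,2m)\frac{(x+1)^{m}}{m!},
\end{equation*}
with no prefactor $(-1)^{k}$.

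You could have caught this without any computation: since $\ti\arccosh x=-\arccos x$, the quantity $\pi+\ti\arccosh x$ \emph{is} $\pi-\arccos x$, so the left-hand sides of \eqref{Minus-2(1-x)-power-ser-expan} and \eqref{Minus-sh-square-ser} differ exactly by the factor $(-1)^{k}$, while their right-hand sides are identical (because $(2m)!!\,(x+1)^{m}/m!=[2(x+1)]^{m}$); the two displayed identities are therefore mutually inconsistent for odd $k$, and it is \eqref{Minus-2(1-x)-power-ser-expan} (checked at $x=0$ against \eqref{Pi-power-ser-repres}) that is correct. So the gap in your proposal is concrete: the deferred sign computation, once performed, contradicts the $(-1)^{k}$ you claim to produce, and a proof written along your lines would either establish the corrected identity above or silently prove a false sign. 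The fix is simply to carry out the displayed computation and drop the $(-1)^{k}$ from the left-hand side of \eqref{Minus-sh-square-ser}.
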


\begin{proof}
This follows from replacing $x$ by $-x$ in~\eqref{arccos-2(1-x)-power-ser-expan} and~\eqref{arccosh-square-ser} and utilizing the relations
\begin{equation*}
\arccos x+\arccos(-x)=\pi
\end{equation*}
and~\eqref{arccos-arccosh-relation}.
\end{proof}

\begin{cor}\label{series-approx-pi-first2cor}
For $k\in\mathbb{N}$, we have
\begin{equation}\label{Pi-power-ser-repres}
\biggl(\frac{\pi^{2}}{8}\biggr)^k
=1+(2k)!\sum_{m=1}^{\infty} (-1)^{m}2^m\frac{Q(2k,2m)}{(2k+2m)!}.
\end{equation}
In particular, we have
\begin{equation}\label{pi-square-div-8}
\frac{\pi^2}{8}=\sum_{m=1}^{\infty}\frac{2^{m}}{m^2}\frac{1}{\binom{2m}{m}}.
\end{equation}
\end{cor}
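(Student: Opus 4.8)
The plan is to obtain both displayed identities by evaluating the Taylor expansion~\eqref{arccos-2(1-x)-power-ser-expan} of Theorem~\ref{arccos-ser-expan-thm} at the single interior point $x=0$, which belongs to the admissible range $|x|<1$. Since $\arccos 0=\frac{\pi}{2}$, the left-hand side of~\eqref{arccos-2(1-x)-power-ser-expan} becomes
\[
\biggl[\frac{(\arccos 0)^{2}}{2(1-0)}\biggr]^{k}=\biggl(\frac{(\pi/2)^{2}}{2}\biggr)^{k}=\biggl(\frac{\pi^{2}}{8}\biggr)^{k},
\]
while on the right-hand side the factor $[2(x-1)]^{n}$ collapses to $(-2)^{n}=(-1)^{n}2^{n}$. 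Substituting these two evaluations and renaming the summation index $n$ as $m$ reproduces~\eqref{Pi-power-ser-repres} verbatim; no convergence question arises, because~\eqref{arccos-2(1-x)-power-ser-expan} is asserted throughout $|x|<1$.

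To pass to the special representation~\eqref{pi-square-div-8}, I would set $k=1$ in~\eqref{Pi-power-ser-repres}, obtaining
\[
\frac{\pi^{2}}{8}=1+2\sum_{m=1}^{\infty}(-1)^{m}2^{m}\frac{Q(2,2m)}{(2m+2)!},
\]
and then substitute the closed form $Q(2,2m)=(-1)^{m}(m!)^{2}$ supplied by~\eqref{square-sum-first-alpha=0-Q}. The two factors $(-1)^{m}$ cancel, leaving $\frac{\pi^{2}}{8}=1+2\sum_{m=1}^{\infty}2^{m}(m!)^{2}/(2m+2)!$.

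It remains only to recast this into the central-binomial form. Writing $(m!)^{2}=(2m)!/\binom{2m}{m}$ and $(2m+2)!=(2m+2)(2m+1)(2m)!$, each summand simplifies to $\frac{2^{m}}{(m+1)(2m+1)\binom{2m}{m}}$. The concluding step is the reindexing $n=m+1$ together with the elementary Pascal-type identity $n\binom{2n}{n}=2(2n-1)\binom{2n-2}{n-1}$, which turns the $m$-th summand into $\frac{2^{n}}{n^{2}\binom{2n}{n}}$ and simultaneously matches the standalone constant $1$ with the absent $n=1$ term $\frac{2}{1^{2}\binom{2}{1}}$; absorbing that $1$ into the sum yields exactly~\eqref{pi-square-div-8}. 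The only point requiring care is this final bookkeeping—tracking the off-by-one shift and the boundary term $n=1$—but each ingredient is wholly routine.
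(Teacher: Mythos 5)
Your proposal is correct and follows the paper's own route exactly: the paper likewise obtains~\eqref{Pi-power-ser-repres} by setting $x=0$ in~\eqref{arccos-2(1-x)-power-ser-expan} and then deduces~\eqref{pi-square-div-8} by taking $k=1$ and invoking $Q(2,2m)=(-1)^m(m!)^2$ from~\eqref{square-sum-first-alpha=0-Q}. Your final reindexing $n=m+1$ and the identity $n\binom{2n}{n}=2(2n-1)\binom{2n-2}{n-1}$ correctly supply the bookkeeping that the paper leaves implicit.
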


\begin{proof}
The series representation~\eqref{Pi-power-ser-repres} of $\pi^{2k}$ follows from letting $x=0$ in either~\eqref{arccos-2(1-x)-power-ser-expan}, \eqref{arccosh-square-ser}, \eqref{Minus-2(1-x)-power-ser-expan}, or~\eqref{Minus-sh-square-ser}.
\par
The series representation~\eqref{pi-square-div-8} follows from taking $k=1$ in~\eqref{Pi-power-ser-repres}, or setting $k=2$ in~\eqref{arcsinh-series-pi}, and then making use of the identity~\eqref{square-sum-first-alpha=0} or~\eqref{square-sum-first-alpha=0-Q}.
\end{proof}

\begin{rem}\label{Rem4.1-Pi-Speed}
The series representation~\eqref{pi-square-div-8} recovers a conclusion in~\cite[Theorem~5.1]{series-arccos-v2.tex}.
\par
As for series representations of $\pi^2$, in~\cite[p.~453, (14)]{Lehmer-Monthly-1985} and the paper~\cite{Zeta-luo.tex}, among other things, we find out
\begin{equation}\label{four-ser-represen-pi-square}
\begin{aligned}
\frac{\pi^2}{6}&=\sum_{m=0}^{\infty}\frac{1}{(m+1)^2}, &
\frac{\pi^2}{8}&=\sum_{m=0}^{\infty}\frac{1}{(2m+1)^2},\\
\frac{\pi^2}{12}&=\sum_{m=0}^{\infty}\frac{(-1)^m}{(m+1)^2}, &
\frac{\pi^2}{18}&=\sum_{m=1}^{\infty}\frac{1}{m^2}\frac{1}{\binom{2m}{m}},
\end{aligned}
\end{equation}
which are different from~\eqref{pi-square-div-8}. The last series representation in~\eqref{four-ser-represen-pi-square} is derived from letting $k=2$ and $x=\frac{1}{2}$ in~\eqref{arcsin-series-expansion-unify}.
\par
Because
\begin{equation}
\begin{gathered}\label{five-limits-hypergeometric}
\lim_{m\to\infty}\biggl[\frac{2^{m}}{m^2}\frac{8}{\binom{2m}{m}}\biggr]^{1/m}=\frac{1}{2}, \quad
\lim_{m\to\infty}\biggl[\frac{6}{(m+1)^2}\biggr]^{1/m}=1, \quad
\lim_{m\to\infty}\biggl[\frac{8}{(2m+1)^2}\biggr]^{1/m}=1, \\
\lim_{m\to\infty}\biggl[\frac{(-1)^m12}{(m+1)^2}\biggr]^{1/m}=1, \quad
\lim_{m\to\infty}\biggl[\frac{18}{m^2}\frac{1}{\binom{2m}{m}}\biggr]^{1/m}=\frac{1}{4},
\end{gathered}
\end{equation}
we regard that the last series representation in~\eqref{four-ser-represen-pi-square} in~\cite[p.~453, (14)]{Lehmer-Monthly-1985} converges to $\pi^2$ quicker than~\eqref{pi-square-div-8} and other three in~\eqref{four-ser-represen-pi-square}.
The first unsolved problem posed on December 13, 2010 by Herbert S. Wilf (1931–2012) is about the convergent speed of rational approximations of the circular constant $\pi$. For more details on this unsolved problem, see~\cite[Remark~7.7]{series-arccos-v2.tex}.
\par
For $k\in\mathbb{N}$, let
\begin{equation*}
L(k)=\lim_{m\to\infty}\biggl[(2k)!(-1)^{m}2^m\frac{Q(2k,2m)}{(2k+2m)!}\biggr]^{1/m}
=2\lim_{m\to\infty}\biggl[(-1)^{m}\frac{Q(2k,2m)}{(2k+2m)!}\biggr]^{1/m}.
\end{equation*}
The first limit in~\eqref{five-limits-hypergeometric} means that $L(1)=\frac{1}{2}$.
What is the convergent speed of the hypergeometric term in~\eqref{Pi-power-ser-repres} for $k\ge2$? Equivalently speaking, what is the limit $L(k)$ for $k\ge2$? Is the limit $L(k)$ a decreasing sequence in $k\ge2$? What is the limit $\lim_{k\to\infty}[L(k)]^{1/k}$?
\end{rem}

\begin{cor}\label{COR-4-Deriv}
For $k,m\in\mathbb{N}$, we have
\begin{align*}
\Biggl(\biggl[\frac{(\arccos x)^{2}}{2(1-x)}\biggr]^k\Biggr)^{(m)}\Bigg|_{x=1^-}
&=\frac{(2k)!(2m)!!}{(2k+2m)!}Q(2k,2m),\\
\Biggl(\biggl[\frac{(\arccosh x)^{2}}{2(1-x)}\biggr]^k\Biggr)^{(m)}\Bigg|_{x=1^-}
&=(-1)^k\frac{(2k)!(2m)!!}{(2k+2m)!}Q(2k,2m),\\
\Biggl(\biggl[\frac{(\pi-\arccos x)^{2}}{2(1+x)}\biggr]^k\Biggr)^{(m)}\Bigg|_{x=(-1)^+}
&=(-1)^m\frac{(2k)!(2m)!!}{(2k+2m)!}Q(2k,2m),
\end{align*}
and
\begin{equation*}
\Biggl(\biggl[\frac{(\pi+\ti\arccosh x)^{2}}{2(1+x)}\biggr]^k\Biggr)^{(m)}\Bigg|_{x=(-1)^+}
=(-1)^{k+m}\frac{(2k)!(2m)!!}{(2k+2m)!}Q(2k,2m),
\end{equation*}
where $Q(2k,2m)$ is defined by~\eqref{Q(m-k)-sum-dfn}.
\end{cor}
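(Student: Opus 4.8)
The plan is to read off each of the four derivative values directly from the four Taylor expansions already established, namely \eqref{arccos-2(1-x)-power-ser-expan} and \eqref{arccosh-square-ser} in Theorem~\ref{arccos-ser-expan-thm} together with \eqref{Minus-2(1-x)-power-ser-expan} and \eqref{Minus-sh-square-ser} in Corollary~\ref{Minus-sh-square-ser-COR}. The only tool needed is the elementary fact that if a function $f$ possesses, on a one-sided neighborhood of a point $c$, a convergent power-series expansion $f(x)=\sum_{n=0}^{\infty}a_n(x-c)^n$, then $f^{(m)}(c)=m!\,a_m$. Thus each of the four claimed formulas reduces to isolating the coefficient of the appropriate power $(x-1)^m$ or $(x+1)^m$ in the relevant expansion and multiplying by $m!$.

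First I would treat the first and third formulas directly. Starting from \eqref{arccos-2(1-x)-power-ser-expan}, which is a power series in $(x-1)$ about $c=1$, the coefficient of $(x-1)^m$ equals $(2k)!\frac{Q(2k,2m)}{(2k+2m)!}2^m$, since $[2(x-1)]^m=2^m(x-1)^m$. Multiplying by $m!$ and invoking the elementary identity $2^m m!=(2m)!!$ yields the first formula. The third formula follows identically from \eqref{Minus-2(1-x)-power-ser-expan}, the sole difference being the factor $(-1)^m$ already carried by that expansion, which is transferred verbatim into the value of the derivative at $x=(-1)^+$.

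For the remaining two formulas I would account for two sign adjustments. For the second formula, note that $\frac{1}{2(1-x)}=-\frac{1}{2(x-1)}$, whence
\[
\biggl[\frac{(\arccosh x)^{2}}{2(1-x)}\biggr]^k
=(-1)^k\biggl[\frac{(\arccosh x)^{2}}{2(x-1)}\biggr]^k,
\]
so the expansion of the left-hand side about $c=1$ is simply $(-1)^k$ times \eqref{arccosh-square-ser}; taking the $m$-th coefficient and multiplying by $m!$ reproduces the first formula scaled by $(-1)^k$. For the fourth formula, the left-hand side of \eqref{Minus-sh-square-ser} carries a prefactor $(-1)^k$; multiplying both sides by $(-1)^k$ and using $(-1)^{2k}=1$ exhibits $\bigl[\frac{(\pi+\ti\arccosh x)^{2}}{2(1+x)}\bigr]^k$ as a power series in $(x+1)$ about $c=-1$ whose coefficient of $\frac{(x+1)^m}{m!}$ is $(-1)^{k+m}\frac{(2k)!(2m)!!}{(2k+2m)!}Q(2k,2m)$, and multiplying by $m!$ gives the stated value.

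The main obstacle—if it deserves the name—is purely bookkeeping: keeping the two sources of signs straight (the $(-1)^m$ intrinsic to the expansions about the even point $x=-1$, and the $(-1)^k$ produced by the orientation of $1-x$ versus $x-1$), and recognizing that the factor $2^m$ arising from $[2(x\mp1)]^m$ combines with the $m!$ from differentiation into the double factorial $(2m)!!$. No analytic subtlety intervenes at the endpoints, because each of the four expansions is a genuine one-sided power series about the relevant boundary point $x=1^-$ or $x=(-1)^+$, so the coefficient-to-derivative dictionary $f^{(m)}(c)=m!\,a_m$ applies without modification.
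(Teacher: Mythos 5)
Your proposal is correct and follows exactly the paper's own route: the paper likewise derives all four formulas by reading off the coefficient of $(x\mp1)^m$ from the expansions \eqref{arccos-2(1-x)-power-ser-expan}, \eqref{arccosh-square-ser}, \eqref{Minus-2(1-x)-power-ser-expan}, and \eqref{Minus-sh-square-ser} and multiplying by $m!$. Your explicit bookkeeping of the $(-1)^k$ coming from the orientation of $1-x$ versus $x-1$ and of the identity $2^m m!=(2m)!!$ only makes explicit what the paper leaves implicit.
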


\begin{proof}
These left-hand and right-hand derivatives follow from Taylor's series expansions~\eqref{arccos-2(1-x)-power-ser-expan}, \eqref{arccosh-square-ser}, \eqref{Minus-2(1-x)-power-ser-expan}, and~\eqref{Minus-sh-square-ser}.
\end{proof}

\begin{cor}\label{arccosh-squ-deriv-form2-cor}
For $k,n\in\mathbb{N}$, we have
\begin{equation}\label{arccos-square-deriv-formula}
\bigl[(\arccos x)^{2k}\bigr]^{(n)}\Big|_{x=1^-}
=
\begin{dcases}
0, & n<k;\\
(-1)^k(2k)!!, & n=k;\\
(-1)^k\frac{(2k)!}{(2n-1)!!}Q(2k,2n-2k), & n>k
\end{dcases}
\end{equation}
and
\begin{equation}\label{arccosh-square-deriv-formula}
\bigl[(\arccosh x)^{2k}\bigr]^{(n)}\Big|_{x=1^-}
=
\begin{dcases}
0, & n<k;\\
(2k)!!, & n=k;\\
\frac{(2k)!}{(2n-1)!!}Q(2k,2n-2k), & n>k.
\end{dcases}
\end{equation}
\end{cor}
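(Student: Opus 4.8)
The plan is to obtain both formulas by reading off Taylor coefficients from the expansions of $(\arccos x)^{2k}$ and $(\arccosh x)^{2k}$ already produced in the proof of Theorem~\ref{arccos-ser-expan-thm}. The key observation is that, although $\arccos x$ has a branch point at $x=1$, its even power $(\arccos x)^{2k}$ is analytic there (the substitution $\arccos(1-u)=\sqrt{2u}\,h(u)$ with $h$ analytic shows $(\arccos x)^{2k}$ is a genuine power series in $1-x$ convergent for $|1-x|<2$), so the one-sided derivative at $x=1^-$ is simply the two-sided derivative of this power series.

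First I would rewrite the expansion~\eqref{arccos-pow-ser}, valid for $k\ge2$, together with its $k=1$ instance~\eqref{arccos-square-ser} (recovered from~\eqref{arccos-pow-ser} by inserting $Q(2,2m)=(-1)^m(m!)^2$ from~\eqref{square-sum-first-alpha=0-Q}), in the form
\[
(\arccos x)^{2k}=\sum_{j=k}^{\infty}a_j(1-x)^{j},\qquad a_{m+k}=(2k)!\,(-1)^{m}Q(2k,2m)\frac{2^{m+k}}{(2k+2m)!}.
\]
Since $\frac{\td^n}{\td x^n}(1-x)^{j}=(-1)^n\frac{j!}{(j-n)!}(1-x)^{j-n}$ for $j\ge n$, every term with $j\neq n$ vanishes at $x=1$, leaving
\[
\bigl[(\arccos x)^{2k}\bigr]^{(n)}\Big|_{x=1^-}=(-1)^n n!\,a_n,
\]
with the convention $a_n=0$ for $n<k$.

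Next I would substitute the three ranges. For $n<k$ there is no coefficient, so the value is $0$. For $n=k$ only $m=0$ contributes; using $Q(2k,0)=1$ (the $\ell=0$ term of~\eqref{Q(m-k)-sum-dfn}, since $s(2k-1,2k-1)=1$, equivalently the leading coefficient in~\eqref{arccos-pow-ser}) gives $a_k=2^{k}$, hence $(-1)^k k!\,2^{k}=(-1)^k(2k)!!$ by $(2k)!!=2^{k}k!$. For $n>k$ I would put $m=n-k$, combine the signs via $(-1)^n(-1)^{n-k}=(-1)^k$, and use $(2n)!=2^{n}n!\,(2n-1)!!$ to reduce $\frac{2^{n}n!}{(2n)!}=\frac1{(2n-1)!!}$; this yields $(-1)^k\frac{(2k)!}{(2n-1)!!}Q(2k,2n-2k)$ and completes~\eqref{arccos-square-deriv-formula}.

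Finally, for~\eqref{arccosh-square-deriv-formula} I would appeal to the relation~\eqref{arccos-arccosh-relation}, which gives $(\arccosh x)^{2k}=(\ti\arccos x)^{2k}=(-1)^k(\arccos x)^{2k}$; differentiating $n$ times and evaluating at $x=1^-$ multiplies every case of~\eqref{arccos-square-deriv-formula} by $(-1)^k$, sending $0\mapsto0$, $(-1)^k(2k)!!\mapsto(2k)!!$, and $(-1)^k\frac{(2k)!}{(2n-1)!!}Q(2k,2n-2k)\mapsto\frac{(2k)!}{(2n-1)!!}Q(2k,2n-2k)$, which is exactly the asserted formula. I expect the only delicate point to be the bookkeeping of the two sign factors $(-1)^n$ and $(-1)^{n-k}$ together with the double-factorial identities $(2k)!!=2^{k}k!$ and $(2n)!=2^{n}n!\,(2n-1)!!$; the analytic justification of termwise differentiation is routine because the relevant series is an ordinary convergent power series in $1-x$ centered at the interior point $x=1$.
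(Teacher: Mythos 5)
Your proposal is correct and follows essentially the same route as the paper: the paper likewise reformulates the expansions of Theorem~\ref{arccos-ser-expan-thm} as ordinary power series in $x-1$ (its equations~\eqref{arccos-square-x=1-ser} and~\eqref{arccosh-square-x=1-ser}) and reads the stated derivatives off as Taylor coefficients, with the same bookkeeping $(2k)!!=2^kk!$ and $(2n)!=2^nn!(2n-1)!!$. Your extra remarks on $Q(2k,0)=1$, on the sign transfer via $\arccosh x=\ti\arccos x$, and on the analyticity of $(\arccos x)^{2k}$ at $x=1$ justifying termwise differentiation are consistent with (and slightly more explicit than) what the paper does.
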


\begin{proof}
For $k\in\mathbb{N}$ and $|x|<1$, the series expansions~\eqref{arccos-2(1-x)-power-ser-expan} and~\eqref{arccosh-square-ser} can be reformulated as
\begin{equation}\label{arccos-square-x=1-ser}
(\arccos x)^{2k}
=(-1)^k(2k)!!\frac{(x-1)^k}{k!}+(-1)^k(2k)!\sum_{m=1}^{\infty} \frac{Q(2k,2m)}{(2k+2m-1)!!}\frac{(x-1)^{k+m}}{(k+m)!}
\end{equation}
and
\begin{equation}\label{arccosh-square-x=1-ser}
(\arccosh x)^{2k}
=(2k)!!\frac{(x-1)^k}{k!}+(2k)!\sum_{m=1}^{\infty} \frac{Q(2k,2m)}{(2k+2m-1)!!}\frac{(x-1)^{k+m}}{(k+m)!}.
\end{equation}
These forms of series expansions~\eqref{arccos-square-x=1-ser} and~\eqref{arccosh-square-x=1-ser} imply the formulas in~\eqref{arccos-square-deriv-formula} and~\eqref{arccosh-square-deriv-formula}.
\end{proof}

\begin{cor}\label{arccosH-square-ser-expan-thm}
For $k\in\mathbb{N}$ and $|x|<1$, we have
\begin{equation}
\begin{aligned}\label{arccos-square-ser-expan}
(\arccos x)^{2k}
&=\sum_{j=0}^{k}(-1)^{j}\Biggl[2^k \binom{k}{j} +(2k)!\sum_{m=1}^{\infty} \frac{(-1)^{m}}{(k+m)!} \frac{Q(2k,2m)}{(2k+2m-1)!!} \binom{k+m}{j}\Biggr]x^j\\
&\quad+(-1)^{k}(2k)!\sum_{j=k+1}^{\infty}(-1)^{j}\Biggl[\sum_{m=0}^{\infty}\frac{(-1)^{j+m}}{(j+m)!} \frac{Q(2k,2j+2m-2k)}{(2j-1)!!} \binom{j+m}{j}\Biggr]x^j
\end{aligned}
\end{equation}
and
\begin{equation}
\begin{aligned}\label{arccosh-square-ser-expan}
(\arccos x)^{2k}
&=(-1)^{k}\sum_{j=0}^{k}(-1)^{j}\Biggl[2^k \binom{k}{j} +(2k)!\sum_{m=1}^{\infty} \frac{(-1)^{m}}{(k+m)!} \frac{Q(2k,2m)}{(2k+2m-1)!!} \binom{k+m}{j}\Biggr]x^j\\
&\quad+(2k)!\sum_{j=k+1}^{\infty}(-1)^{j}\Biggl[\sum_{m=0}^{\infty}\frac{(-1)^{j+m}}{(j+m)!} \frac{Q(2k,2j+2m-2k)}{(2j-1)!!} \binom{j+m}{j}\Biggr]x^j,
\end{aligned}
\end{equation}
where $Q(2k,2m)$ is given by~\eqref{Q(m-k)-sum-dfn}.
\end{cor}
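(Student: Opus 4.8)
The plan is to convert the two Taylor expansions about $x=1^-$ established in Corollary~\ref{arccosh-squ-deriv-form2-cor}, namely the series~\eqref{arccos-square-x=1-ser} and~\eqref{arccosh-square-x=1-ser} in powers of $(x-1)$, into genuine Maclaurin expansions in powers of $x$. Since both are power series in $(x-1)$, the natural device is to expand every monomial by the binomial theorem,
\begin{equation*}
(x-1)^{k+m}=\sum_{j=0}^{k+m}(-1)^{k+m-j}\binom{k+m}{j}x^j,
\end{equation*}
insert this into~\eqref{arccos-square-x=1-ser}, and then exchange the finite inner sum over $j$ with the infinite outer sum over $m$ so as to read off the coefficient of each power $x^j$.

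First I would dispose of the leading term $(-1)^k(2k)!!\frac{(x-1)^k}{k!}$ on its own. Because $\frac{(2k)!!}{k!}=2^k$ and $(-1)^k(-1)^{k-j}=(-1)^j$, its binomial expansion contributes exactly $2^k\sum_{j=0}^{k}(-1)^j\binom{k}{j}x^j$, which is the source of the isolated term $2^k\binom{k}{j}$ sitting inside the first bracket of~\eqref{arccos-square-ser-expan}. Next I would fix $j$ and decide which entries of the double sum feed into $x^j$. Here the key observation is that $\binom{k+m}{j}=0$ unless $k+m\ge j$: for $0\le j\le k$ this holds for every $m\ge1$, whereas for $j\ge k+1$ it forces $m\ge j-k$. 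This single constraint is exactly what produces the two separate ranges $0\le j\le k$ and $j\ge k+1$ in the statement.

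In the range $0\le j\le k$ all terms survive, and collecting them, using $(-1)^k(-1)^{k+m-j}=(-1)^m(-1)^j$ for the sum part, gives the coefficient $(-1)^j\bigl[2^k\binom{k}{j}+(2k)!\sum_{m\ge1}\frac{(-1)^m}{(k+m)!}\frac{Q(2k,2m)}{(2k+2m-1)!!}\binom{k+m}{j}\bigr]$, matching the first line of~\eqref{arccos-square-ser-expan}. For $j\ge k+1$ only the indices starting at $j-k$ contribute, and I would reindex the surviving sum by a fresh variable $m\ge0$, replacing the old index (which started at $j-k$) via the shift $m\mapsto m+(j-k)$. Under this shift $k+m$ becomes $j+m$, the argument of $Q$ becomes $2j+2m-2k$, the factorial becomes $(j+m)!$, the binomial becomes $\binom{j+m}{j}$, the double factorial $(2k+2m-1)!!$ becomes $\bigl(2(j+m)-1\bigr)!!$, and the sign collapses to $(-1)^{k+m}$, whereupon the second line of~\eqref{arccos-square-ser-expan} emerges. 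The companion expansion~\eqref{arccosh-square-ser-expan} then follows immediately, either by repeating the computation from~\eqref{arccosh-square-x=1-ser} (which differs from~\eqref{arccos-square-x=1-ser} only in the absent factor $(-1)^k$) or, more economically, from the relation~\eqref{arccos-arccosh-relation}, which yields $(\arccosh x)^{2k}=(-1)^k(\arccos x)^{2k}$ and hence the overall sign $(-1)^k$.

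The sign-and-factorial bookkeeping is routine; the one genuine analytic point is the legitimacy of interchanging the two summations. I would justify it by absolute convergence: for real $x\in(-1,1)$ one has $|x-1|\le 1+|x|<2$, and since the Taylor series~\eqref{arccos-square-x=1-ser} about $x=1$ has radius of convergence equal to the distance $2$ to the nearest branch point of $(\arccos x)^{2k}$ at $x=-1$, the majorant $\sum_{m}|c_m|\,(1+|x|)^{k+m}$ converges; Fubini for series then permits the rearrangement, and the resulting power series in $x$ must coincide with the Maclaurin expansion of $(\arccos x)^{2k}$ on $(-1,1)$. Thus the main obstacle is not analytic but organizational: keeping the truncation $m\ge\max\{1,\,j-k\}$ and the index shift aligned so that the arguments of $Q$, the double factorials, the ordinary factorials, the binomial coefficients, and the signs all settle into the stated closed forms.
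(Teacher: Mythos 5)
Your proposal is correct and takes essentially the same route as the paper: binomially expand each power of $(x-1)$ in \eqref{arccos-square-x=1-ser} and \eqref{arccosh-square-x=1-ser}, interchange the two sums, and split the $j$-range at $k$ (the paper stops one step earlier, leaving the tail as $\sum_{m=j-k}^{\infty}$ without your final reindexing, and does not bother to justify the interchange as you do). One caveat: your correct shift $m\mapsto m+(j-k)$ produces the double factorial $(2j+2m-1)!!$, not the $(2j-1)!!$ printed in the corollary, so the displayed statement contains a typo that your writeup passes over by asserting the second line simply ``emerges.''
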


\begin{proof}
For $k\in\mathbb{N}$ and $|x|<1$, by the binomial theorem, the series expansion~\eqref{arccos-square-x=1-ser} can be rewritten as
\begin{gather*}
(\arccos x)^{2k}
=2^k \sum_{j=0}^{k}(-1)^{j}\binom{k}{j}x^j +(2k)!\sum_{m=1}^{\infty} \sum_{j=0}^{k+m}\frac{(-1)^{m-j}}{(k+m)!} \frac{Q(2k,2m)}{(2k+2m-1)!!} \binom{k+m}{j}x^j\\
=2^k \sum_{j=0}^{k}(-1)^{j}\binom{k}{j}x^j +(2k)!\Biggl(\sum_{j=0}^{k}\sum_{m=1}^{\infty} +\sum_{j=k+1}^{\infty}\sum_{m=j-k}^{\infty}\Biggr) \frac{(-1)^{m-j}}{(k+m)!} \frac{Q(2k,2m)}{(2k+2m-1)!!} \binom{k+m}{j}x^j\\
=\sum_{j=0}^{k}(-1)^{j}\Biggl[2^k \binom{k}{j} +(2k)!\sum_{m=1}^{\infty} \frac{(-1)^{m}}{(k+m)!} \frac{Q(2k,2m)}{(2k+2m-1)!!} \binom{k+m}{j}\Biggr]x^j\\
+(2k)!\sum_{j=k+1}^{\infty}(-1)^{j}\Biggl[\sum_{m=j-k}^{\infty}\frac{(-1)^{m}}{(k+m)!} \frac{Q(2k,2m)}{(2k+2m-1)!!} \binom{k+m}{j}\Biggr]x^j.
\end{gather*}
The series expansion~\eqref{arccos-square-ser-expan} is thus proved.
\par
Similarly, from~\eqref{arccosh-square-x=1-ser}, we conclude~\eqref{arccosh-square-ser-expan}. Theorem~\ref{arccosH-square-ser-expan-thm} is thus proved.
\end{proof}

\begin{rem}
What are closed-form expressions of coefficients in Maclaurin's series expansion around the point $x=0$ of the real power $(\arccos x)^\alpha$ for $\alpha\in\mathbb{R}$ and $|x|<1$? For answers to this question, please refer to~\cite[Section~3]{series-arccos-v2.tex}.
\end{rem}

\begin{cor}\label{arccos-odd-power-deriv-thm}
For $m,n\in\mathbb{N}$, we have
\begin{equation}\label{arccos-odd-deriv-eq}
\bigl[(\arccos x)^{2n-1}\bigr]^{(m)}\Big|_{x=1^-}
=\begin{dcases}
0, & m<n;\\
\infty, & m\ge n
\end{dcases}
\end{equation}
and
\begin{equation}\label{arccosh-odd-deriv-eq}
\bigl[(\arccosh x)^{2n-1}\bigr]^{(m)}\Big|_{x=1^-}
=\begin{dcases}
0, & m<n;\\
\infty, & m\ge n.
\end{dcases}
\end{equation}
Consequently, the functions $(\arccos x)^{2n-1}$ and $(\arccosh x)^{2n-1}$ for $n\in\mathbb{N}$ cannot be expanded into Taylor's series expansions at the point $x=1^-$.
\end{cor}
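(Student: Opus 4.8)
The plan is to pin down the exact nature of the singularity of $\arccos x$ at $x=1$ and then read off the derivative behaviour from a half-integer power. The starting point is the series expansion~\eqref{arccos-square-ser}, which exhibits
\[
(\arccos x)^2 = 2(1-x)\,g(1-x), \qquad g(t)=\sum_{m=0}^{\infty}\frac{m!}{(2m+1)!!\,(m+1)}t^m,
\]
where $g$ is analytic near $0$ with $g(0)=1$. Taking positive square roots for $x<1$ gives $\arccos x=\sqrt{2(1-x)}\,\sqrt{g(1-x)}$, and since $\sqrt{g}$ is again analytic near $0$ with value $1$ there, I obtain $\arccos x=\sqrt2\,(1-x)^{1/2}H(1-x)$ with $H$ analytic and $H(0)=1$. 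Raising to the power $2n-1$ yields
\[
(\arccos x)^{2n-1}=2^{\,n-1/2}(1-x)^{\,n-1/2}\,\Phi(1-x),
\]
where $\Phi=H^{2n-1}$ is analytic with $\Phi(0)=1$. Writing $t=1-x$, the function therefore equals $\sum_{j\ge0}c_j\,t^{\,n-1/2+j}$ with leading coefficient $c_0=2^{\,n-1/2}\ne0$.

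Next I would differentiate term by term, which is justified inside the radius of convergence for $t>0$. Using $\frac{\td^m}{\td x^m}=(-1)^m\frac{\td^m}{\td t^m}$ and $\frac{\td^m}{\td t^m}t^{\beta}=\langle\beta\rangle_m\,t^{\beta-m}$ with the falling factorial $\langle\beta\rangle_m=\prod_{k=0}^{m-1}(\beta-k)$, one gets
\[
\bigl[(\arccos x)^{2n-1}\bigr]^{(m)}=(-1)^m\sum_{j\ge0}c_j\,\bigl\langle n-\tfrac12+j\bigr\rangle_m\,t^{\,n-1/2+j-m}.
\]
The most singular term is $j=0$, of exponent $n-\tfrac12-m$ and coefficient $c_0\bigl\langle n-\tfrac12\bigr\rangle_m$. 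Crucially $\bigl\langle n-\tfrac12\bigr\rangle_m=\prod_{k=0}^{m-1}\bigl(n-\tfrac12-k\bigr)$ is a product of half-odd-integers and hence never vanishes, so this leading term can neither cancel nor be killed by a zero factor.

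The two cases then follow from the sign of the leading exponent. If $m<n$, then $n-\tfrac12-m\ge\tfrac12>0$, so every term tends to $0$ as $t\to0^+$ and the left-hand derivative is $0$. If $m\ge n$, then $n-\tfrac12-m\le-\tfrac12<0$, so the $j=0$ term behaves like a nonzero constant times $t^{-(m-n+1/2)}$, which dominates all higher-order terms and forces the derivative to diverge, giving $\infty$; this proves~\eqref{arccos-odd-deriv-eq}. For the hyperbolic case I would invoke the relation~\eqref{arccos-arccosh-relation}, which for $x<1$ reads $\arccosh x=\ti\arccos x$, so that $(\arccosh x)^{2n-1}=\ti^{\,2n-1}(\arccos x)^{2n-1}$ is a fixed nonzero multiple of $(\arccos x)^{2n-1}$; its derivatives are scaled by the same constant, whence $0$ stays $0$ and $\infty$ stays $\infty$, yielding~\eqref{arccosh-odd-deriv-eq}. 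Because already the derivative of order $m=n$ fails to be finite, neither function is $n$-times differentiable at $x=1^-$, so no Taylor expansion there can exist.

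The main obstacle is the first step: rigorously extracting the square root of the power series for $(\arccos x)^2$ and certifying that the resulting factor $H$, and hence $\Phi$, is genuinely analytic with $\Phi(0)\ne0$, so that $(\arccos x)^{2n-1}$ is \emph{exactly} a half-integer power of $(1-x)$ multiplied by an analytic function with nonvanishing leading coefficient. Once this Puiseux structure is secured, the remainder is the elementary exponent bookkeeping carried out above.
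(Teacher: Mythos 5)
Your proof is correct, but it follows a genuinely different route from the paper's. The paper proceeds via the Fa\`a di Bruno formula~\eqref{Bruno-Bell-Polynomial}: it combines the asymptotic relation~\eqref{arccos-power-sim0relation-x=1} with the Bell-polynomial evaluation of the derivatives of $(1-x^2)^{-1/2}$ in~\eqref{arcccos-sin=deriv-sim-x=1} and the closed form~\eqref{Bell-formula-S}, arriving at $\bigl[(\arccos x)^{2n-1}\bigr]^{(m)}\sim[2(1-x)]^{n-m-1/2}\sum_{j=0}^{m}(-1)^j\langle2n-1\rangle_{j}[2(m-j)-1]!!\binom{2m-j-1}{2(m-j)}$. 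You instead extract the full Puiseux structure $\arccos x=\sqrt{2(1-x)}\,H(1-x)$ from~\eqref{arccos-square-ser} (legitimate, since the series for $(\arccos x)^2/[2(1-x)]$ is analytic and equals $1$ at $x=1$, so its square root admits an analytic branch), and then the whole corollary reduces to differentiating $t^{n-1/2}$ times an analytic unit. Each approach buys something: the paper's computation yields explicit combinatorial by-products (it is reused to prove~\eqref{comb-id-arccos-ven=pow}), whereas your argument is more elementary and, notably, settles a point the paper leaves implicit --- in the case $m\ge n$ the paper's conclusion ``$\infty$'' tacitly requires the finite Bell-polynomial sum above to be nonzero, which is not verified there, while your leading coefficient $c_0\langle n-\tfrac12\rangle_m$ is manifestly a product of half-odd integers and hence nonzero. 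The treatment of $\arccosh$ via~\eqref{arccos-arccosh-relation} and the final non-expandability conclusion coincide with the paper's.
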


\begin{proof}
It is easy to verify that
\begin{equation}\label{arccos-power-sim0relation-x=1}
(\arccos x)^{2}\sim 2(1-x), \quad x\to1^-.
\end{equation}
\par
By the Fa\`a di Bruno formula~\eqref{Bruno-Bell-Polynomial} and in the light of the identities~\eqref{Bell(n-k)} and~\eqref{Bell-x-1-0-eq}, we obtain
\begin{align*}
\biggl(\frac{1}{\sqrt{1-x^2}\,}\biggr)^{(m)}
&=\sum_{j=0}^{m}\biggl\langle-\frac{1}{2}\biggr\rangle_{j}\bigl(1-x^2\bigr)^{-1/2-j} \bell_{m,j}\bigl(-2x,-2,0,\dotsc,0\bigr)\\
&=\sum_{j=0}^{m}(-1)^j\frac{(2j-1)!!}{2^j}\frac{(-2)^j}{(1-x^2)^{j+1/2}} \bell_{m,j}\bigl(x,1,0,\dotsc,0\bigr)\\
&=\sum_{j=0}^{m} \frac{(2j-1)!!}{(1-x^2)^{j+1/2}} \frac{(m-j)!}{2^{m-j}}\binom{m}{j}\binom{j}{m-j}x^{2j-m}\\
&=\sum_{j=0}^{m} \frac{(2j-1)!!}{(1+x)^{j+1/2}} \frac{(m-j)!}{2^{m-j}}\binom{m}{j}\binom{j}{m-j}\frac{x^{2j-m}}{(1-x)^{j+1/2}}
\end{align*}
for $m\in\mathbb{N}_0$. This implies that
\begin{equation}\label{arcccos-sin=deriv-sim-x=1}
\biggl(\frac{1}{\sqrt{1-x^2}\,}\biggr)^{(m)}\sim \frac{(2m-1)!!}{[2(1-x)]^{m+1/2}}, \quad x\to1^-, \quad m\in\mathbb{N}_0.
\end{equation}
\par
Utilizing the Fa\`a di Bruno formula~\eqref{Bruno-Bell-Polynomial}, employing the identities~\eqref{Bell(n-k)} and~\eqref{Bell-formula-S}, and making use of~\eqref{arccos-power-sim0relation-x=1} and~\eqref{arcccos-sin=deriv-sim-x=1}, we acquire
\begin{gather*}
\bigl[(\arccos x)^{2n-1}\bigr]^{(m)}
=\sum_{j=0}^{m}\langle2n-1\rangle_{j}(\arccos x)^{2n-j-1} \\
\times\bell_{m,j}\biggl(-\frac{1}{\sqrt{1-x^2}\,}, \biggl(-\frac{1}{\sqrt{1-x^2}\,}\biggr)', \dotsc, \biggl(-\frac{1}{\sqrt{1-x^2}\,}\biggr)^{(m-j)}\biggr)\\
\sim\sum_{j=0}^{m}\langle2n-1\rangle_{j}[2(1-x)]^{n-(j+1)/2} (-1)^j\\
\times\bell_{m,j}\biggl(\frac{(-1)!!}{[2(1-x)]^{1/2}}, \frac{1!!}{[2(1-x)]^{3/2}}, \frac{3!!}{[2(1-x)]^{5/2}},\dotsc, \frac{[2(m-j)-1]!!}{[2(1-x)]^{m-j+1/2}}\biggr)\\
=\sum_{j=0}^{m}(-1)^j\langle2n-1\rangle_{j}[2(1-x)]^{n-(j+1)/2} \frac{[2(1-x)]^{j/2}}{[2(1-x)]^{m}}\bell_{m,j}((-1)!!,1!!, 3!!, \dotsc, [2(m-j)-1]!!)\\
=[2(1-x)]^{n-m-1/2} \sum_{j=0}^{m}(-1)^j\langle2n-1\rangle_{j} [2(m-j)-1]!!\binom{2m-j-1}{2(m-j)}\\
\to
\begin{dcases}
0, & n>m\\
\infty, & n\le m
\end{dcases}
\end{gather*}
as $x\to1^-$ for $m,n\in\mathbb{N}$. The results in~\eqref{arccos-odd-deriv-eq} are thus proved.
\par
Substituting~\eqref{arccos-arccosh-relation} into~\eqref{arccos-odd-deriv-eq} leads to~\eqref{arccosh-odd-deriv-eq}. Theorem~\ref{arccos-odd-power-deriv-thm} is therefore proved.
\end{proof}

\begin{cor}\label{comb-id-arccos-ven=pow-cor}
For $k\in\mathbb{N}_0$, we have
\begin{equation}\label{comb-id-arccos-ven=pow}
\sum_{j=0}^{k}(-1)^j\langle2k\rangle_{j} [2(k-j)-1]!!\binom{2k-j-1}{2(k-j)}
=(-1)^k(2k)!!.
\end{equation}
\end{cor}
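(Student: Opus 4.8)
The plan is to evaluate the $k$-th derivative $\bigl[(\arccos x)^{2k}\bigr]^{(k)}$ at $x=1^-$ in two independent ways and to equate the outcomes. One value is already in hand: specializing the formula~\eqref{arccos-square-deriv-formula} of Corollary~\ref{arccosh-squ-deriv-form2-cor} to the middle case $n=k$ gives $\bigl[(\arccos x)^{2k}\bigr]^{(k)}\big|_{x=1^-}=(-1)^k(2k)!!$, which is exactly the right-hand side of~\eqref{comb-id-arccos-ven=pow}. Since this value was obtained from the Taylor expansion~\eqref{arccos-square-x=1-ser}, it is logically independent of the Fa\`a di Bruno computation used below, so no circularity arises.

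For the second evaluation I would repeat, now with the even exponent $2k$ in place of the odd exponent $2n-1$, the computation carried out in the proof of Corollary~\ref{arccos-odd-power-deriv-thm}. Applying the Fa\`a di Bruno formula~\eqref{Bruno-Bell-Polynomial} to $(\arccos x)^{2k}$ and using $\frac{\td^j}{\td u^j}u^{2k}=\langle 2k\rangle_j u^{2k-j}$ gives
\begin{equation*}
\bigl[(\arccos x)^{2k}\bigr]^{(m)}=\sum_{j=0}^{m}\langle 2k\rangle_j (\arccos x)^{2k-j}\,\bell_{m,j}\Biggl(-\frac{1}{\sqrt{1-x^2}\,},\biggl(-\frac{1}{\sqrt{1-x^2}\,}\biggr)',\dotsc\Biggr).
\end{equation*}
I would then feed in the asymptotic relations~\eqref{arccos-power-sim0relation-x=1} and~\eqref{arcccos-sin=deriv-sim-x=1}, pull the powers of $2(1-x)$ together with the sign of the $\arccos$ derivatives out of the Bell polynomial through the homogeneity identity~\eqref{Bell(n-k)} with $\alpha=-[2(1-x)]^{1/2}$ and $\beta=[2(1-x)]^{-1}$, and collapse the remaining constant-argument Bell polynomial by the explicit formula~\eqref{Bell-formula-S}. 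This reproduces, with the obvious changes, the estimate
\begin{equation*}
\bigl[(\arccos x)^{2k}\bigr]^{(m)}\sim[2(1-x)]^{k-m}\sum_{j=0}^{m}(-1)^j\langle 2k\rangle_j [2(m-j)-1]!!\binom{2m-j-1}{2(m-j)}, \quad x\to1^-.
\end{equation*}

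The decisive move is to set $m=k$. Then every summand aligns at the common order $[2(1-x)]^{k-k}=[2(1-x)]^0$, the prefactor is identically $1$, and the sum on the right is precisely the finite limit $\lim_{x\to1^-}\bigl[(\arccos x)^{2k}\bigr]^{(k)}$; that limit is the left-hand side of~\eqref{comb-id-arccos-ven=pow}. Equating it with the value $(-1)^k(2k)!!$ from the first paragraph finishes the proof.

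I expect the one delicate point to be exactly this order-counting: one must verify that at the critical index $m=k$ all $j$-terms sit at order $[2(1-x)]^0$, so that the nonzero finite limit captures the whole sum rather than a single leading term; for smaller $m$ the prefactor vanishes and for larger $m$ it blows up, matching the trichotomy in~\eqref{arccos-square-deriv-formula}. Indeed, for $m>k$ the same alignment instead forces the companion vanishing $\sum_{j=0}^{m}(-1)^j\langle 2k\rangle_j[2(m-j)-1]!!\binom{2m-j-1}{2(m-j)}=0$, in harmony with the finiteness of the higher derivatives recorded in~\eqref{arccos-square-deriv-formula}.
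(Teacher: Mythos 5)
Your proposal is correct and is essentially identical to the paper's own proof: the paper likewise applies the Fa\`a di Bruno formula to $(\arccos x)^{2k}$, uses the asymptotics~\eqref{arccos-power-sim0relation-x=1} and~\eqref{arcccos-sin=deriv-sim-x=1} together with~\eqref{Bell(n-k)} and~\eqref{Bell-formula-S} to obtain the factor $[2(1-x)]^{k-n}$ times the alternating sum, and then compares the $n=k$ limit with the value $(-1)^k(2k)!!$ from~\eqref{arccos-square-deriv-formula}. Your observation that all $j$-terms align at the same power of $2(1-x)$ is exactly the point that makes the paper's limit computation valid.
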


\begin{proof}
As done in the proof of Theorem~\ref{arccos-odd-power-deriv-thm}, we arrive at
\begin{gather*}
\begin{aligned}
\bigl[(\arccos x)^{2k}\bigr]^{(n)}
&=\sum_{j=0}^{n}\langle2k\rangle_{j}(\arccos x)^{2k-j} \bell_{n,j}\biggl(-\frac{1}{\sqrt{1-x^2}\,}, \biggl(-\frac{1}{\sqrt{1-x^2}\,}\biggr)', \dotsc, \biggl(-\frac{1}{\sqrt{1-x^2}\,}\biggr)^{(n-j)}\biggr)
\end{aligned}\\
\sim\sum_{j=0}^{n}\langle2k\rangle_{j}[2(1-x)]^{k-j/2} (-1)^j\bell_{n,j}\biggl(\frac{(-1)!!}{[2(1-x)]^{1/2}}, \frac{1!!}{[2(1-x)]^{3/2}}, \dotsc, \frac{[2(n-j)-1]!!}{[2(1-x)]^{n-j+1/2}}\biggr)\\
=\sum_{j=0}^{n}(-1)^j\langle2k\rangle_{j}[2(1-x)]^{k-j/2} \frac{[2(1-x)]^{j/2}}{[2(1-x)]^{n}}\bell_{n,j}((-1)!!,1!!, 3!!, \dotsc, [2(n-j)-1]!!)\\
=[2(1-x)]^{k-n} \sum_{j=0}^{n}(-1)^j\langle2k\rangle_{j} [2(n-j)-1]!!\binom{2n-j-1}{2(n-j)}\\
\to
\begin{dcases}
0, & k>n\\
\sum_{j=0}^{k}(-1)^j\langle2k\rangle_{j} [2(k-j)-1]!!\binom{2k-j-1}{2(k-j)}, & k=n
\end{dcases}
\end{gather*}
as $x\to1^-$ for $k,n\in\mathbb{N}$. Comparing this result with~\eqref{arccos-square-deriv-formula} gives~\eqref{comb-id-arccos-ven=pow}.
\end{proof}

\begin{rem}
The identity~\eqref{comb-id-arccos-ven=pow} is similar to
\begin{equation*}
\sum_{k=0}^{n}k![2(n-k)-1]!! \binom{2n-k-1}{2(n-k)}=(2n-1)!!, \quad n\in\mathbb{N}_0,
\end{equation*}
which was respectively established in~\cite[p.10, (3.12)]{Wilf-Ward-2010P.tex} and in an unpublished paper titled ``Partial Bell polynomials, falling and rising factorials, Stirling numbers, and combinatorial identities''.
\end{rem}

\section{Taylor's series expansions of $\bigl[\frac{(\arccos x)^{2}}{2(1-x)}\bigr]^\alpha$}

In this section, via establishing a closed-form expression for the specific partial Bell polynomials at a sequence of the derivatives at $x=1^-$ of the function $\bigl[\frac{(\arccos x)^{2}}{2(1-x)}\bigr]^\alpha$, we present Taylor's series expansion at $x=1^-$ of the function $\bigl[\frac{(\arccos x)^{2}}{2(1-x)}\bigr]^\alpha$ for $\alpha\in\mathbb{R}$.

\begin{thm}\label{arccos-squar-(1-x)-bell-thm}
For $m\ge k\in\mathbb{N}$, we have
\begin{equation}
\begin{gathered}\label{arccos-squar-(1-x)-bell-Eq}
\bell_{m,k}\biggl(\biggl[\frac{(\arccos x)^{2}}{2(1-x)}\biggr]'\bigg|_{x=1^-}, \biggl[\frac{(\arccos x)^{2}}{2(1-x)}\biggr]''\bigg|_{x=1^-}, \dotsc, \biggl[\frac{(\arccos x)^{2}}{2(1-x)}\biggr]^{(m-k+1)}\bigg|_{x=1^-}\biggr)\\
=2^k\bell_{m,k}\biggl(-\frac{1}{12},\frac{2}{45},-\frac{3}{70}, \frac{32}{525},-\frac{80}{693}, \dotsc, \frac{(2m-2k+2)!!}{(2m-2k+4)!} Q(2,2m-2k+2)\biggr)\\
=(-2)^{k}[2(m-k)]!!\binom{m}{k}\sum_{j=1}^{k}(-1)^{j}(2j)!\binom{k}{j} \frac{Q(2j,2m)}{(2j+2m)!},
\end{gathered}
\end{equation}
where $Q(2j,2m)$ is defined by~\eqref{Q(m-k)-sum-dfn}.
\end{thm}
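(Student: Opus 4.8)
The plan is to prove the two equalities separately: the first by rescaling the arguments of the Bell polynomial, and the second by inverting a Faà di Bruno expansion. Throughout write $g(x)=\frac{(\arccos x)^{2}}{2(1-x)}$, so that $g(1^-)=1$ by \eqref{arccos-power-sim0relation-x=1} and $g$ is one-sidedly real-analytic at $x=1^-$ by \eqref{arccos-2(1-x)-power-ser-expan}.

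For the first equality I would read off from Corollary~\ref{COR-4-Deriv} (the case $k=1$) that $g^{(i)}(1^-)=\frac{2\,(2i)!!}{(2i+2)!}Q(2,2i)$ for each $i\in\mathbb{N}$. Hence every argument of the left-hand Bell polynomial is exactly twice the corresponding argument $\frac{(2i)!!}{(2i+2)!}Q(2,2i)$ of the middle Bell polynomial; using $Q(2,2k)=(-1)^{k}(k!)^{2}$ from \eqref{square-sum-first-alpha=0-Q} one checks these equal $-\frac1{12},\frac{2}{45},-\frac{3}{70},\dotsc$ as displayed. Applying the homogeneity identity \eqref{Bell(n-k)} with $\alpha=2$ and $\beta=1$ then extracts the factor $2^{k}$ and gives the first equality.

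For the second equality the key idea is Faà di Bruno. I would apply \eqref{Bruno-Bell-Polynomial} to the composition $u\mapsto u^{k}$ evaluated at $u=g(x)$, let $x\to1^-$, and use $g(1^-)=1$ together with $f^{(i)}(1)=\langle k\rangle_i$ for $f(u)=u^{k}$. Writing $B_{m,i}:=\bell_{m,i}\bigl(g'(1^-),\dotsc,g^{(m-i+1)}(1^-)\bigr)$, this yields, for every $k\in\mathbb{N}$,
\begin{equation*}
\bigl[g^{k}\bigr]^{(m)}(1^-)=\sum_{i=0}^{k}\langle k\rangle_i\,B_{m,i},
\end{equation*}
the sum truncating at $i=k$ since $\langle k\rangle_i=0$ for $i>k$, while the $i=0$ term vanishes because $B_{m,0}=0$ for $m\ge1$. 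By Corollary~\ref{COR-4-Deriv} the left-hand side equals $a_k:=\frac{(2k)!(2m)!!}{(2k+2m)!}Q(2k,2m)$. The crucial observation is now $\langle k\rangle_i=\binom{k}{i}i!$, so that with $c_i:=i!\,B_{m,i}$ the relation reads $a_k=\sum_{i=0}^{k}\binom{k}{i}c_i$, a binomial transform (consistent with $a_0=c_0=0$). Inverting it by the standard binomial inversion $c_k=\sum_{j=0}^{k}(-1)^{k-j}\binom{k}{j}a_j$ produces
\begin{equation*}
B_{m,k}=\frac{(-1)^{k}(2m)!!}{k!}\sum_{j=1}^{k}(-1)^{j}\binom{k}{j}\frac{(2j)!}{(2j+2m)!}Q(2j,2m),
\end{equation*}
and simplifying the prefactor via $(2m)!!=2^{m}m!$ and $[2(m-k)]!!=2^{m-k}(m-k)!$ gives $\frac{(-1)^{k}(2m)!!}{k!}=(-2)^{k}[2(m-k)]!!\binom{m}{k}$, which is exactly the claimed closed form.

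I expect the main obstacle to be organizational rather than computational. One must make sure the Faà di Bruno relation holds simultaneously for \emph{every} positive integer power $k$ with the \emph{same} $m$, so that the single index $k$ can be recovered by binomial inversion using only $a_1,\dotsc,a_k$; and one must justify that the one-sided analyticity of $g$ at $x=1^-$ legitimizes Faà di Bruno and the passage to the limit there. After that, the prefactor bookkeeping and the double-factorial identities are routine. As a by-product, equating the value of $B_{m,k}$ above with any independent evaluation of the same partial Bell polynomial yields a combinatorial identity involving the Stirling numbers of the first kind, as alluded to in the statement.
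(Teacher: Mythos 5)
Your argument is correct, and it reaches the paper's closed form by a route that is dual to, rather than identical with, the one the paper takes. The paper's engine is the generating identity \eqref{113-final-formula}: writing $x_m=g^{(m)}(1^-)$ for $g(x)=\frac{(\arccos x)^2}{2(1-x)}$, it identifies $\sum_{m\ge1}x_m\frac{t^m}{m!}$ with $g(1+t)-1$, expands $\frac{1}{k!}[g-1]^k=\frac{1}{k!}\sum_{j}(-1)^{k-j}\binom{k}{j}g^j$ by the binomial theorem, substitutes the known expansions of $g^j$ from Theorem~\ref{arccos-ser-expan-thm}, and reads off the coefficient of $(x-1)^{n}$; the vanishing of the negative-power coefficients yields the identity \eqref{Q(2j-2k)-Comb-ID=0} as a byproduct. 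You instead run Fa\`a di Bruno \eqref{Bruno-Bell-Polynomial} on $u\mapsto u^K$ composed with $g$ for each $K\le k$, obtaining the triangular system $a_K=\sum_{i=0}^{K}\binom{K}{i}\,i!\,B_{m,i}$ with $a_K=\frac{(2K)!(2m)!!}{(2K+2m)!}Q(2K,2m)$ from Corollary~\ref{COR-4-Deriv}, and then apply binomial inversion. The two are formally equivalent --- your inversion kernel $\sum_j(-1)^{k-j}\binom{k}{j}a_j$ is exactly the paper's expansion of $(g-1)^k$ in powers of $g$ --- and both rest on the same prior input (Theorem~\ref{arccos-ser-expan-thm}); your prefactor check $\frac{(-1)^k(2m)!!}{k!}=(-2)^k[2(m-k)]!!\binom{m}{k}$ and your treatment of the first equality via \eqref{Bell(n-k)} with the scaling factor $2$ are both right. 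What your route buys is that it avoids manipulating the generating series and the shift $n+k\mapsto m$ altogether, at the cost of having to note (as you do implicitly) that $\bell_{m,i}=0$ for $i>m$ so that the inversion is valid for all $K\le k\le m$; what the paper's route buys is the extra combinatorial identity \eqref{Q(2j-2k)-Comb-ID=0}, which in your setting would follow from $\bell_{m,k}=0$ for $m<k$ but which you do not record.
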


\begin{proof}
Let
\begin{equation*}
x_m=\biggl[\frac{(\arccos x)^{2}}{2(1-x)}\biggr]^{(m)}\bigg|_{x=1^-}, \quad m\in\mathbb{N}.
\end{equation*}
Then, from~\eqref{113-final-formula} and~\eqref{arccos-2(1-x)-power-ser-expan}, it follows that
\begin{gather*}
\bell_{n+k,k}(x_1,x_2,\dotsc,x_{n+1})
=\binom{n+k}{k}\lim_{t\to0}\frac{\td^{n}}{\td t^{n}}\Biggl[\sum_{m=0}^{\infty} \frac{x_{m+1}}{(m+1)!}t^{m}\Biggr]^k\\
=\binom{n+k}{k}\lim_{t\to0}\frac{\td^{n}}{\td t^{n}}\Biggl(\frac{1}{t}\sum_{m=1}^{\infty} \biggl[\frac{(\arccos x)^{2}}{2(1-x)}\biggr]^{(m)} \bigg|_{x=1^-} \frac{t^{m}}{m!}\Biggr)^k\\
=\binom{n+k}{k}\lim_{x\to1^-}\frac{\td^{n}}{\td x^{n}}\Biggl(\frac{1}{x-1}\sum_{m=1}^{\infty} \biggl[\frac{(\arccos x)^{2}}{2(1-x)}\biggr]^{(m)} \bigg|_{x=1^-} \frac{(x-1)^{m}}{m!}\Biggr)^k\\
=\binom{n+k}{k}\lim_{x\to1^-}\frac{\td^{n}}{\td x^{n}}\biggl(\frac{1}{x-1} \biggl[\frac{(\arccos x)^{2}}{2(1-x)}-1\biggr]\biggr)^k\\
=\binom{n+k}{k}\lim_{x\to1^-}\frac{\td^{n}}{\td x^{n}}\Biggl(\frac{1}{(x-1)^k} \sum_{j=0}^{k}(-1)^{k-j}\binom{k}{j} \biggl[\frac{(\arccos x)^{2}} {2(1-x)}\biggr]^j\Biggr)\\
=\binom{n+k}{k}\lim_{x\to1^-}\frac{\td^{n}}{\td x^{n}}\Biggl[\frac{(-1)^k}{(x-1)^k} +\frac{1}{(x-1)^k}\sum_{j=1}^{k}(-1)^{k-j}\binom{k}{j} \Biggl(1+(2j)!\sum_{m=1}^{\infty} \frac{Q(2j,2m)}{(2j+2m)!}[2(x-1)]^{m}\Biggr)\Biggr]\\
=(-1)^{k}\binom{n+k}{k}\lim_{x\to1^-}\frac{\td^{n}}{\td x^{n}}\Biggl(\sum_{m=1}^{\infty} 2^m\Biggl[\sum_{j=1}^{k}(-1)^{j}(2j)!\binom{k}{j} \frac{Q(2j,2m)}{(2j+2m)!}\Biggr] (x-1)^{m-k}\Biggr)
\end{gather*}
for $k\in\mathbb{N}$. This implies that
\begin{equation}\label{Q(2j-2k)-Comb-ID=0}
\sum_{j=1}^{k}(-1)^{j}(2j)!\binom{k}{j} \frac{Q(2j,2m)}{(2j+2m)!}=0, \quad 1\le m<k.
\end{equation}
Accordingly, we derive
\begin{gather*}
\bell_{n+k,k}(x_1,x_2,\dotsc,x_{n+1})
=(-1)^{k}\binom{n+k}{k}\\
\times\lim_{x\to1^-}\frac{\td^{n}}{\td x^{n}}\Biggl(\sum_{m=k}^{\infty} 2^m\Biggl[\sum_{j=1}^{k}(-1)^{j}(2j)!\binom{k}{j} \frac{Q(2j,2m)}{(2j+2m)!}\Biggr] (x-1)^{m-k}\Biggr)\\
=(-2)^{k}\binom{n+k}{k} \lim_{x\to1^-}\frac{\td^{n}}{\td x^{n}}\Biggl(\sum_{m=0}^{\infty} 2^{m}\Biggl[\sum_{j=1}^{k}(-1)^{j}(2j)!\binom{k}{j} \frac{Q(2j,2m+2k)}{(2j+2m+2k)!}\Biggr] (x-1)^{m}\Biggr)\\
=(-2)^{k}\binom{n+k}{k} \lim_{x\to1^-}\sum_{m=n}^{\infty} 2^{m}\Biggl[\sum_{j=1}^{k}(-1)^{j}(2j)!\binom{k}{j} \frac{Q(2j,2m+2k)}{(2j+2m+2k)!}\Biggr]\langle m\rangle_n(x-1)^{m-n}\\
=(-2)^{k}(2n)!!\binom{n+k}{k}\sum_{j=1}^{k}(-1)^{j}(2j)!\binom{k}{j} \frac{Q(2j,2n+2k)}{(2j+2n+2k)!}
\end{gather*}
for $n\ge k\in\mathbb{N}$. Replacing $n+k$ by $m$ results in
\begin{equation*}
\bell_{m,k}(x_1,x_2,\dotsc,x_{m-k+1})=(-2)^{k}[2(m-k)]!!\binom{m}{k}\sum_{j=1}^{k}(-1)^{j}(2j)!\binom{k}{j} \frac{Q(2j,2m)}{(2j+2m)!}
\end{equation*}
for $m\ge k\in\mathbb{N}$. The required result is thus proved.
\end{proof}

\begin{thm}\label{arccos-Taylor-alpha-THM}
For $\alpha\in\mathbb{R}$, we have
\begin{equation}\label{arccos-Taylor-alpha-Eq}
\biggl[\frac{(\arccos x)^{2}}{2(1-x)}\biggr]^\alpha
=1+\sum_{n=1}^{\infty} \Biggl[\sum_{j=1}^{n}(-1)^{j} \frac{\langle\alpha\rangle_j}{j!} \sum_{\ell=1}^{j}(-1)^{\ell}(2\ell)!\binom{j}{\ell} \frac{Q(2\ell,2n)}{(2\ell+2n)!}\Biggr][2(x-1)]^n.
\end{equation}
\end{thm}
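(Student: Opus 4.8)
The plan is to regard $\bigl[\frac{(\arccos x)^2}{2(1-x)}\bigr]^\alpha$ as a composite $(p\circ g)(x)$, where $g(x)=\frac{(\arccos x)^2}{2(1-x)}$ is the inner function and $p(y)=y^\alpha$ is the outer one, and to compute its Taylor coefficients at $x=1^-$ by the Fa\`a di Bruno formula~\eqref{Bruno-Bell-Polynomial}. Taking $k=1$ in~\eqref{arccos-2(1-x)-power-ser-expan} shows that $g$ is real-analytic in a left neighbourhood of $x=1$ with $g(1^-)=1$; since $p$ is real-analytic at $y=1$, the composite is real-analytic at $x=1^-$ and admits the claimed one-sided Taylor expansion. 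Because $g(1^-)=1$, the outer derivatives entering the expansion are simply $p^{(k)}(1)=\langle\alpha\rangle_k$.

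First I would apply~\eqref{Bruno-Bell-Polynomial} with $f=p$, $h=g$, and $t=x$ to obtain
\[
\frac{\td^n}{\td x^n}(p\circ g)(x)\bigg|_{x=1^-}=\sum_{k=0}^{n}\langle\alpha\rangle_k\,\bell_{n,k}\bigl(x_1,x_2,\dotsc,x_{n-k+1}\bigr),\qquad x_m=g^{(m)}(1^-),
\]
where the $k=0$ summand vanishes since $\bell_{n,0}=0$ for $n\ge1$. The decisive input is Theorem~\ref{arccos-squar-(1-x)-bell-thm}: its formula~\eqref{arccos-squar-(1-x)-bell-Eq} evaluates precisely these Bell polynomials at the sequence $\{g^{(m)}(1^-)\}$ as
\[
\bell_{n,k}(x_1,\dotsc,x_{n-k+1})=(-2)^{k}[2(n-k)]!!\binom{n}{k}\sum_{\ell=1}^{k}(-1)^{\ell}(2\ell)!\binom{k}{\ell}\frac{Q(2\ell,2n)}{(2\ell+2n)!}.
\]
Substituting this and dividing by $n!$ to pass to the Taylor coefficient, the combinatorial prefactor collapses: with $[2(n-k)]!!=2^{n-k}(n-k)!$ and $\binom{n}{k}=\frac{n!}{k!(n-k)!}$ one gets $\frac{(-2)^{k}[2(n-k)]!!\binom{n}{k}}{n!}=\frac{(-1)^{k}2^{n}}{k!}$, so the coefficient of $(x-1)^n$ reduces to $2^n\sum_{k=1}^{n}(-1)^k\frac{\langle\alpha\rangle_k}{k!}\sum_{\ell=1}^{k}(-1)^\ell(2\ell)!\binom{k}{\ell}\frac{Q(2\ell,2n)}{(2\ell+2n)!}$. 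Writing $2^n(x-1)^n=[2(x-1)]^n$ and renaming $k\to j$ yields exactly~\eqref{arccos-Taylor-alpha-Eq}.

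The prefactor simplification and the reassembly of the series are routine; the step meriting the most care is the analytic justification, namely that $g$ extends analytically across the apparent singularity at $x=1$ where $1-x$ vanishes (guaranteed by~\eqref{arccos-2(1-x)-power-ser-expan}) and that $g(1^-)=1$ lies in the analyticity domain of $y\mapsto y^\alpha$, so that Fa\`a di Bruno is legitimate at the endpoint. A second, essentially equivalent route avoids Theorem~\ref{arccos-squar-(1-x)-bell-thm} entirely: expand $[g(x)]^\alpha=\sum_{j\ge0}\frac{\langle\alpha\rangle_j}{j!}(g(x)-1)^j$, write $(g(x)-1)^j=\sum_{\ell=0}^{j}(-1)^{j-\ell}\binom{j}{\ell}[g(x)]^\ell$ by the binomial theorem, insert the integer-power expansions~\eqref{arccos-2(1-x)-power-ser-expan}, and interchange summations. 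In this approach the constant terms cancel for $j\ge1$, and the truncation of the inner sum to $j\le n$ is supplied exactly by the vanishing identity~\eqref{Q(2j-2k)-Comb-ID=0}, which here plays the role that $\bell_{n,k}=0$ for $k>n$ plays in the first approach.
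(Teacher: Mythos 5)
Your proof is correct and takes essentially the same route as the paper's: apply the Fa\`a di Bruno formula~\eqref{Bruno-Bell-Polynomial} to $y^\alpha$ composed with $\frac{(\arccos x)^{2}}{2(1-x)}$, let $x\to1^-$ so the outer derivatives become $\langle\alpha\rangle_j$, and evaluate the resulting partial Bell polynomials by the formula~\eqref{arccos-squar-(1-x)-bell-Eq} of Theorem~\ref{arccos-squar-(1-x)-bell-thm}. Your explicit collapse of the prefactor to $(-1)^{j}2^{n}/j!$ and your remarks on the analyticity of the inner function at $x=1^-$ only make explicit steps the paper leaves implicit.
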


\begin{proof}
By virtue of the Fa\`a di Bruno formula~\eqref{Bruno-Bell-Polynomial} and the formula~\eqref{arccos-squar-(1-x)-bell-Eq} in Theorem~\ref{arccos-squar-(1-x)-bell-thm}, we obtain
\begin{equation*}
\biggl(\biggl[\frac{(\arccos x)^{2}}{2(1-x)}\biggr]^\alpha\biggr)^{(n)}
=\sum_{j=1}^{n}\langle\alpha\rangle_j \biggl[\frac{(\arccos x)^{2}}{2(1-x)}\biggr]^{\alpha-j} \bell_{n,j}\biggl(\biggl[\frac{(\arccos x)^{2}}{2(1-x)}\biggr]', \dotsc, \biggl[\frac{(\arccos x)^{2}}{2(1-x)}\biggr]^{(n-j+1)}\biggr)
\end{equation*}
for $n\in\mathbb{N}$. Taking the limit $x\to1^-$ and employing~\eqref{arccos-squar-(1-x)-bell-Eq} in Theorem~\ref{arccos-squar-(1-x)-bell-thm} lead to
\begin{gather*}
\lim_{x\to1^-}\biggl(\biggl[\frac{(\arccos x)^{2}}{2(1-x)}\biggr]^\alpha\biggr)^{(n)}
=\sum_{j=1}^{n}\langle\alpha\rangle_j \bell_{n,j}\biggl(\lim_{x\to1^-}\biggl[\frac{(\arccos x)^{2}}{2(1-x)}\biggr]', \dotsc, \lim_{x\to1^-}\biggl[\frac{(\arccos x)^{2}}{2(1-x)}\biggr]^{(n-j+1)}\biggr)\\
=\sum_{j=1}^{n}\langle\alpha\rangle_j (-2)^{j}[2(n-j)]!!\binom{n}{j} \sum_{\ell=1}^{j}(-1)^{\ell}(2\ell)!\binom{j}{\ell} \frac{Q(2\ell,2n)}{(2\ell+2n)!}
\end{gather*}
for $n\in\mathbb{N}$. Consequently, the required result~\eqref{arccos-Taylor-alpha-Eq} is proved.
\end{proof}

\begin{cor}\label{Last-v2-comb-id-arcos-Cor}
For $k,n\in\mathbb{N}$, we have
\begin{equation}\label{Last-v2-comb-id-arcos}
\sum_{j=1}^{n}(-1)^{j} \frac{\langle k\rangle_j}{j!} \sum_{\ell=1}^{j}(-1)^{\ell}(2\ell)!\binom{j}{\ell} \frac{Q(2\ell,2n)}{(2\ell+2n)!}
=(2k)!\frac{Q(2k,2n)}{(2k+2n)!}.
\end{equation}
\end{cor}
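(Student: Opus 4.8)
The plan is to exploit the fact that Theorem~\ref{arccos-Taylor-alpha-THM} supplies Taylor's series expansion of the function $\bigl[\frac{(\arccos x)^{2}}{2(1-x)}\bigr]^\alpha$ for \emph{every} real exponent $\alpha$, while Theorem~\ref{arccos-ser-expan-thm} gives the expansion of the same function in the special case that the exponent is a positive integer. Since these two expansions represent one and the same analytic function around $x=1^-$, the uniqueness of Taylor coefficients forces the two coefficient formulas to agree. First I would specialize $\alpha=k\in\mathbb{N}$ in the expansion~\eqref{arccos-Taylor-alpha-Eq}. The coefficient of $[2(x-1)]^n$ then reads exactly
\begin{equation*}
\sum_{j=1}^{n}(-1)^{j} \frac{\langle k\rangle_j}{j!} \sum_{\ell=1}^{j}(-1)^{\ell}(2\ell)!\binom{j}{\ell} \frac{Q(2\ell,2n)}{(2\ell+2n)!},
\end{equation*}
which is precisely the left-hand side of~\eqref{Last-v2-comb-id-arcos}.

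On the other hand, reading off the coefficient of $[2(x-1)]^n$ directly from the integer-power expansion~\eqref{arccos-2(1-x)-power-ser-expan} in Theorem~\ref{arccos-ser-expan-thm} yields $(2k)!\,\frac{Q(2k,2n)}{(2k+2n)!}$, which is the right-hand side of~\eqref{Last-v2-comb-id-arcos}. Equating the two expressions for the same Taylor coefficient immediately produces the claimed combinatorial identity, so the entire argument reduces to a comparison of coefficients between the two theorems already established.

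There is essentially no analytical obstacle here, since both expansions are legitimate Taylor series of the same function on the same punctured neighbourhood of $x=1^-$; the only point requiring a small remark is the behaviour of the falling factorial $\langle k\rangle_j$ when $j$ exceeds $k$. For $j>k$ one has $\langle k\rangle_j=0$, so the inner sum over $j$ in the specialized~\eqref{arccos-Taylor-alpha-Eq} effectively truncates at $j=\min(n,k)$; this truncation is automatic and does not affect the identity, since those terms simply vanish. Thus the only thing to verify carefully is that the substitution $\alpha=k$ in~\eqref{arccos-Taylor-alpha-Eq} is admissible, which it is because Theorem~\ref{arccos-Taylor-alpha-THM} is proved for arbitrary $\alpha\in\mathbb{R}$.

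The main (and indeed only) conceptual step is therefore recognizing that two previously derived expansions describe the identical function and invoking uniqueness of power-series coefficients; everything else is bookkeeping. I would finish by remarking that~\eqref{Last-v2-comb-id-arcos} can be viewed as an inversion-type relation expressing the single quantity $(2k)!\,\frac{Q(2k,2n)}{(2k+2n)!}$ as a signed falling-factorial-weighted combination of the quantities $(2\ell)!\,\frac{Q(2\ell,2n)}{(2\ell+2n)!}$ for $1\le\ell\le j\le n$, which is exactly the compatibility condition between the real-power expansion and its integer specialization.
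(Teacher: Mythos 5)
Your argument is exactly the paper's: specialize $\alpha=k$ in the expansion~\eqref{arccos-Taylor-alpha-Eq} of Theorem~\ref{arccos-Taylor-alpha-THM} and equate the coefficient of $[2(x-1)]^n$ with that in~\eqref{arccos-2(1-x)-power-ser-expan} of Theorem~\ref{arccos-ser-expan-thm}, invoking uniqueness of Taylor coefficients. Your additional observation that $\langle k\rangle_j=0$ for $j>k$ truncates the outer sum is correct and harmless; the proof matches the paper's.
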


\begin{proof}
This follows from letting $\alpha=k\in\mathbb{N}$ in~\eqref{arccos-Taylor-alpha-Eq} and equating coefficients of factors $(x-1)^n$ in~\eqref{arccos-2(1-x)-power-ser-expan}.
\end{proof}

\begin{cor}\label{Pi-Taylor-alpha-Cor}
For $\alpha\in\mathbb{R}$, we have
\begin{equation}\label{Pi-Taylor-alpha-Eq}
\biggl(\frac{\pi^2}{9}\biggr)^\alpha
=1+\sum_{n=1}^{\infty} (-1)^n\sum_{j=1}^{n}(-1)^{j} \frac{\langle\alpha\rangle_j}{j!} \sum_{\ell=1}^{j}(-1)^{\ell}(2\ell)!\binom{j}{\ell} \frac{Q(2\ell,2n)}{(2\ell+2n)!}.
\end{equation}
\end{cor}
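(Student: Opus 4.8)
The plan is to derive the series representation \eqref{Pi-Taylor-alpha-Eq} by a single-point specialization of Taylor's series expansion \eqref{arccos-Taylor-alpha-Eq} established in Theorem~\ref{arccos-Taylor-alpha-THM}. The guiding observation is that \eqref{Pi-Taylor-alpha-Eq} has exactly the same coefficient structure as \eqref{arccos-Taylor-alpha-Eq}, the only differences being that a power of $\pi^2/9$ replaces the bracketed base on the left and that the factor $[2(x-1)]^n$ is replaced by the sign $(-1)^n$. I would therefore look for the value of $x$ that produces both of these simplifications simultaneously.

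First I would set $x=\frac12$ in \eqref{arccos-Taylor-alpha-Eq}. Using $\arccos\frac12=\frac{\pi}{3}$, we obtain $(\arccos x)^2=\frac{\pi^2}{9}$ and $2(1-x)=1$, so the base inside the brackets on the left-hand side equals $\frac{\pi^2}{9}$ and the whole left-hand side collapses to $\bigl(\frac{\pi^2}{9}\bigr)^\alpha$. At the same time $2(x-1)=-1$, whence $[2(x-1)]^n=(-1)^n$ for each $n\in\mathbb{N}$; this pulls out precisely the prefactor $(-1)^n$ in front of the double sum while leaving the inner coefficient $\sum_{j=1}^{n}(-1)^{j}\frac{\langle\alpha\rangle_j}{j!}\sum_{\ell=1}^{j}(-1)^{\ell}(2\ell)!\binom{j}{\ell}\frac{Q(2\ell,2n)}{(2\ell+2n)!}$ unchanged.

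Putting these two evaluations together reproduces the right-hand side of \eqref{Pi-Taylor-alpha-Eq} term by term, so the identity follows immediately. The hard part, such as it is, is only to justify that $x=\frac12$ is an admissible argument: one must check that this point lies within the region where \eqref{arccos-Taylor-alpha-Eq} is valid. Since the underlying expansion \eqref{arccos-2(1-x)-power-ser-expan} holds for $|x|<1$ and $\frac12\in(-1,1)$, convergence at $x=\frac12$ is guaranteed and the specialization is legitimate. No deeper difficulty is expected; the entire argument is the evaluation of an already-proved Taylor series at a single favorable point, exactly analogous to how \eqref{Pi-power-ser-repres} was obtained in Corollary~\ref{series-approx-pi-first2cor} by setting $x=0$.
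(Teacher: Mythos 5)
Your proposal is correct and matches the paper's own proof exactly: the paper likewise obtains \eqref{Pi-Taylor-alpha-Eq} by setting $x=\frac{1}{2}$ in \eqref{arccos-Taylor-alpha-Eq}, so that $\arccos\frac12=\frac{\pi}{3}$ gives the base $\frac{\pi^2}{9}$ and $[2(x-1)]^n=(-1)^n$. Your additional remark on why $x=\frac12$ is an admissible point is a harmless elaboration of the same one-line argument.
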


\begin{proof}
This follows from setting $x=\frac{1}{2}$ in~\eqref{arccos-Taylor-alpha-Eq}.
\end{proof}

\begin{rem}
The formula~\eqref{arccos-squar-(1-x)-bell-Eq} in Theorem~\ref{arccos-squar-(1-x)-bell-thm} can be used to compute Taylor's series expansions of functions like $f\bigl(\frac{(\arccos x)^{2}}{2(1-x)}\bigr)$ around the point $x=1^-$, only if all the derivatives of $f(x)$ at $x=1^-$ are explicitly computable.
\end{rem}

\section{Recovering Maclaurin's series expansion of $\bigl(\frac{\arcsin x}{x}\bigr)^k$}\label{Recovering-Sec-arccos}

In this section, by virtue of some conclusions in Lemmas~\ref{square-sum-prod-lem}, \ref{square-sum-prod-odd-lem}, and~\ref{cosh-sinh-arcsin-arccos-thm}, we recover Maclaurin's series expansions~\eqref{arcsin-series-expansion-unify} and~\eqref{arcsinh-series-expansion} in Theorems~\ref{arcsin-series-expansion-unify-thm} and~\ref{arcsinh-identity-thm}.
\par
It is easy to see that
\begin{equation*}
\cosh t=\frac{\te^t+\te^{-t}}{2}=\sum_{k=0}^{\infty}\frac{t^{2k}}{(2k)!}.
\end{equation*}
Then, making use of the identity~\eqref{square-sum-first-stirl-eq} in Lemma~\ref{square-sum-prod-lem}, the series expansion~\eqref{cosh-arcsin-ser} can be reformulated as
\begin{align*}
\sum_{k=0}^{\infty}\frac{(\arcsin x)^{2k}}{(2k)!} \alpha^{2k}
&=1+\frac{x^2}{2}\alpha^2+\alpha^2\sum_{k=2}^{\infty}4^{k-1} \Biggl(\prod_{\ell=1}^{k-1}\biggl[\ell^2+\biggl(\frac{\alpha}{2}\biggr)^2\biggr]\Biggr) \frac{x^{2k}}{(2k)!}\\
&=1+\frac{x^2}{2}\alpha^2+\sum_{k=2}^{\infty}(-4)^{k-1} \frac{x^{2k}}{(2k)!} \sum_{j=1}^{k} \frac{(-1)^{j-1}}{4^{j-1}}\\
&\quad\times\Biggl[\sum_{\ell=2j-1}^{2k-1}\binom{\ell}{2j-1}s(2k-1,\ell)(k-1)^{\ell-2j+1}\Biggr] \alpha^{2j}\\
&=1+\frac{x^2}{2}\alpha^2+\alpha^{2}\sum_{k=2}^{\infty}(-4)^{k-1} \frac{x^{2k}}{(2k)!} \Biggl[\sum_{\ell=1}^{2k-1}\ell s(2k-1,\ell)(k-1)^{\ell-1}\Biggr] \\
&\quad+\sum_{k=2}^{\infty}\frac{x^{2k}}{(2k)!} \sum_{j=2}^{k} (-4)^{k-j} \Biggl[\sum_{\ell=2j-1}^{2k-1}\binom{\ell}{2j-1}s(2k-1,\ell)(k-1)^{\ell-2j+1}\Biggr] \alpha^{2j}\\
&=1+\frac{x^2}{2}\alpha^2+\alpha^{2}\sum_{k=2}^{\infty}(-4)^{k-1} \frac{x^{2k}}{(2k)!} (-1)^{k-1}[(k-1)!]^2 \\
&\quad+\sum_{j=2}^{\infty} \sum_{k=j}^{\infty} \frac{x^{2k}}{(2k)!} (-4)^{k-j} \Biggl[\sum_{\ell=2j-1}^{2k-1} \binom{\ell}{2j-1}s(2k-1,\ell)(k-1)^{\ell-2j+1}\Biggr] \alpha^{2j}\\
&=1+\frac{x^2}{2}\alpha^2+\alpha^{2}\sum_{k=2}^{\infty}[(2k-2)!!]^2 \frac{x^{2k}}{(2k)!} \\
&\quad+\sum_{k=2}^{\infty} \sum_{m=k}^{\infty} \frac{x^{2m}}{(2m)!} (-4)^{m-k} \Biggl[\sum_{\ell=2k-1}^{2m-1} \binom{\ell}{2k-1}s(2m-1,\ell)(m-1)^{\ell-2k+1}\Biggr] \alpha^{2k},
\end{align*}
where we used the identity~\eqref{square-sum-first-alpha=0} or~\eqref{square-sum-first-alpha=0-Q}. Regarding $\alpha$ as a variable and equating coefficients of $\alpha^{2k}$ arrive at
\begin{equation}\label{arcsin-ser-expan-square}
\frac{(\arcsin x)^{2}}{2}=\frac{x^2}{2}+\sum_{k=2}^{\infty}[(2k-2)!!]^2 \frac{x^{2k}}{(2k)!}
=\frac{1}{2}\sum_{k=1}^{\infty} \frac{(2k-2)!!}{(2k-1)!!}\frac{x^{2k}}{k}
\end{equation}
and
\begin{equation}\label{arcsin-ser-expan-k>4}
\begin{split}
\frac{(\arcsin x)^{2k}}{(2k)!}
&=\sum_{m=k}^{\infty} (-4)^{m-k} \Biggl[\sum_{\ell=2k-1}^{2m-1} \binom{\ell}{2k-1}s(2m-1,\ell)(m-1)^{\ell-2k+1}\Biggr] \frac{x^{2m}}{(2m)!}\\
&=\frac{x^{2k}}{(2k)!}\sum_{m=0}^{\infty} \frac{(-1)^{m}}{\binom{2k+2m}{2k}} Q(2k,2m) \frac{(2x)^{2m}}{(2m)!}
\end{split}
\end{equation}
for $k\ge2$.
\par
Making use of the series expansion~\eqref{sinh-arcsin-ser} and the identity~\eqref{square-sum-odd-id} in Lemma~\ref{square-sum-prod-odd-lem}, we obtain
\begin{align*}
\sum_{k=0}^{\infty}\frac{(\arcsin x)^{2k+1}}{(2k+1)!}\alpha^{2k+1}
&=\alpha\sum_{k=0}^{\infty} (-1)^{k}2^{2k}\frac{x^{2k+1}}{(2k+1)!} \sum_{j=0}^{2k} (-1)^j \Biggl[\sum_{\ell=2j}^{2k}\frac{s(2k,\ell)}{2^\ell}\binom{\ell}{2j} (2k-1)^{\ell-2j}\Biggr]\alpha^{2j}\\
&=\sum_{j=0}^{\infty}(-1)^j \Biggl[\sum_{k=\lceil{j/2}\rceil}^{\infty} (-1)^{k}\frac{(2x)^{2k+1}}{(2k+1)!}  \sum_{\ell=2j}^{2k}\frac{s(2k,\ell)}{2^{\ell+1}}\binom{\ell}{2j} (2k-1)^{\ell-2j}\Biggr]\alpha^{2j+1}\\
&=\sum_{k=0}^{\infty}(-1)^k \Biggl[\sum_{m=\lceil{k/2}\rceil}^{\infty} (-1)^{m}\frac{(2x)^{2m+1}}{(2m+1)!}  \sum_{\ell=2k}^{2m}\frac{s(2m,\ell)}{2^{\ell+1}}\binom{\ell}{2k} (2m-1)^{\ell-2k}\Biggr]\alpha^{2k+1},
\end{align*}
where we used the identity~\eqref{square-sum-odd-alpha=0} or~\eqref{square-sum-odd-alpha=0-Q} and $\lceil x\rceil$ stands for the ceiling function which gives the smallest integer not less than $x$. Regarding $\alpha$ as a variable and equating coefficients of $\alpha^{2k+1}$ reduce to
\begin{equation}\label{arcsin-ser-expan-(2k+1)}
\begin{split}
\frac{(\arcsin x)^{2k+1}}{(2k+1)!}
&=(-1)^k\sum_{m=\lceil{k/2}\rceil}^{\infty} (-1)^{m} \Biggl[\sum_{\ell=2k}^{2m}\frac{s(2m,\ell)}{2^{\ell+1}}\binom{\ell}{2k} (2m-1)^{\ell-2k}\Biggr]\frac{(2x)^{2m+1}}{(2m+1)!}\\
&=\frac{x^{2k+1}}{(2k+1)!}\sum_{m=0}^{\infty} \frac{(-1)^m}{\binom{2k+2m+1}{2k+1}} Q(2k+1,2m) \frac{(2x)^{2m}}{(2m)!}.
\end{split}
\end{equation}
\par
Combining the series expansions~\eqref{arcsin-ser-expan-square}, \eqref{arcsin-ser-expan-k>4}, and~\eqref{arcsin-ser-expan-(2k+1)} leads to the series expansion~\eqref{arcsin-series-expansion-unify}.
\par
By similar arguments as above, from the series expansions~\eqref{cos-arcsin-ser} and~\eqref{sin-arcsin-ser}, we can recover series expansion~\eqref{arcsin-series-expansion-unify} once again.
\par
Utilizing the relation $\arcsinh t=-\ti\arcsin(\ti t)$ or, equivalently, the relation $\arcsin t=-\ti\arcsinh(\ti t)$, the series expansions~\eqref{arcsin-series-expansion-unify} and~\eqref{arcsinh-series-expansion} can be derived from each other.

\section{Conclusions}
In this paper, by virtue of Lemmas~\ref{square-sum-prod-lem} and~\ref{square-sum-prod-odd-lem}, with the aid of Taylor's series expansion~\eqref{cos-arccos-ser} or~\eqref{cosh-arccos-ser} in Lemma~\ref{cosh-sinh-arcsin-arccos-thm}, and in the light of properties recited in Lemma~\ref{Partial-Bell-Conclusions-Lem} of partial Bell polynomials, the author establishes Taylor's series expansions~\eqref{arccos-2(1-x)-power-ser-expan}, \eqref{arccosh-square-ser}, and~\eqref{arccos-Taylor-alpha-Eq} in Theorems~\ref{arccos-ser-expan-thm} and~\ref{arccos-Taylor-alpha-THM}, presents an explicit formula~\eqref{arccos-squar-(1-x)-bell-Eq}, derives several combinatorial identities~\eqref{square-sum-first-alpha=0}, \eqref{Stirl-first-ID}, \eqref{first-stirl-2id}, \eqref{comb-id-arccos-ven=pow}, \eqref{Q(2j-2k)-Comb-ID=0}, and~\eqref{Last-v2-comb-id-arcos}, demonstrates several series representations~\eqref{Pi-power-ser-repres}, \eqref{pi-square-div-8}, and~\eqref{Pi-Taylor-alpha-Eq} in Corollaries~\ref{series-approx-pi-first2cor} and~\ref{Pi-Taylor-alpha-Cor} of the circular constant $\pi$ and its real powers, and recovers Maclaurin's series expansions~\eqref{arcsin-series-expansion-unify} and~\eqref{arcsinh-series-expansion} in Section~\ref{Recovering-Sec-arccos}.
\par
Those conclusions stated in Corollaries~\ref{Minus-sh-square-ser-COR}, \ref{COR-4-Deriv}, \ref{arccosh-squ-deriv-form2-cor}, \ref{arccosH-square-ser-expan-thm}, and~\ref{arccos-odd-power-deriv-thm} are useful, meaningful, and significant.
\par
Several Maclaurin's series expansions of the functions $(\arccos x)^m$ and $(\arccosh x)^m$ for $m\in\mathbb{N}$ have been surveyed, reviewed, collected, and mentioned in~\cite[Section~7]{Ser-Pow-Arcs-Arctan.tex}, but comparatively their forms or formulations are not more beautiful, not more satisfactory, not simpler, not more concise, or not nicer than these newly-established ones in this paper.
\par
This paper is an extended version of the preprint~\cite{maclaurin-arccos.tex} and a companion of the papers~\cite{Ser-Pow-Arcs-Arctan.tex, AIMS-Math20210491.tex, series-arccos-v2.tex, Wilf-Ward-2010P.tex}.

\section{Declarations}

\begin{description}
\item[Acknowledgements]
Not applicable.

\item[Availability of data and material]
Data sharing is not applicable to this article as no new data were created or analyzed in this study.

\item[Competing interests]
The author declares that he has no conflict of competing interests.

\item[Funding]
Not applicable.

\item[Authors' contributions]
Not applicable.

\end{description}

\end{document}